\documentclass[11pt]{article}
\usepackage[T1]{fontenc}
\usepackage[utf8]{inputenc}
\usepackage{geometry}
\geometry{verbose,tmargin=2.5cm,bmargin=2.5cm,lmargin=2.5cm,rmargin=2.5cm}
\usepackage{amsmath}
\usepackage{amsthm}
\usepackage[dvipsnames]{xcolor} 
\usepackage[color=Purple!50!white, textwidth=22mm]{todonotes}
\usepackage[unicode=true,
 bookmarks=false,
 breaklinks=false,pdfborder={0 0 1},backref=section,colorlinks=false]
 {hyperref}

\usepackage{comment}

\makeatletter
\theoremstyle{plain}
\newtheorem{thm}{\protect\theoremname}[section]
\theoremstyle{definition}
\newtheorem{defn}[thm]{\protect\definitionname}
\theoremstyle{plain}
\newtheorem{lem}[thm]{\protect\lemmaname}
\theoremstyle{plain}
\newtheorem{claim}[thm]{\protect\claimname}
\theoremstyle{plain}
\newtheorem{cor}[thm]{\protect\corollaryname}
\theoremstyle{plain}

\usepackage{amssymb}

\makeatother

\providecommand{\corollaryname}{Corollary}
\providecommand{\definitionname}{Definition}
\providecommand{\lemmaname}{Lemma}
\providecommand{\claimname}{Claim}
\providecommand{\theoremname}{Theorem}

\title{Homogeneous structures in subset sums and non-averaging sets} 
\author{David Conlon\thanks{Department of Mathematics, California Institute of Technology, Pasadena, CA 91125. Email:  {\tt dconlon@caltech.edu.} Research supported by NSF Award DMS-2054452.} \and Jacob Fox\thanks{Department of Mathematics, Stanford University, Stanford, CA 94305. Email: {\tt jacobfox@stanford.edu}. Research supported by a Packard Fellowship and by NSF Awards DMS-1855635 and DMS-2154169.} \and Huy Tuan Pham\thanks{Department of Mathematics, Stanford University, Stanford, CA 94305. Email: {\tt huypham@stanford.edu}. Research supported in part by a Clay Research Fellowship and a Two Sigma Fellowship.}}
\date{}

\begin{document}

\maketitle

\begin{abstract}
We show that for every positive integer $k$ there are positive constants $C$ and $c$ such that if $A$ is a subset of $\{1, 2, \dots, n\}$ of size at least $C n^{1/k}$, then, for some $d \leq k-1$, the set of subset sums of $A$ contains a homogeneous $d$-dimensional generalized arithmetic progression of size at least $c|A|^{d+1}$.
This strengthens a result of Szemer\'edi and Vu, who proved a similar statement without the homogeneity condition. 
As an application, we make progress on the Erd\H{o}s--Straus non-averaging sets problem, showing that every subset $A$ of $\{1, 2, \dots, n\}$ of size at least $n^{\sqrt{2} - 1 + o(1)}$ contains an element which is the average of two or more other elements of $A$. This gives the first polynomial improvement on a result of Erd\H{o}s and S\'ark\"ozy from 1990.
\end{abstract}

\section{Introduction}

What is the largest subset $A$ of $[n] := \{1,2,\dots, n\}$ with the property that no element of $A$ is the average of two or more other elements of $A$? Such sets, known in the literature as non-averaging sets, were first introduced by Erd\H{o}s and Straus~\cite{Str,EStr} in the late 1960s. If we write $h(n)$ for the size of the largest non-averaging subset of $[n]$, then the bounds 
$$\Omega(n^{1/4}) \leq h(n) \leq n^{1/2 +o(1)},$$
with the lower bound due to Bosznay~\cite{Bosz} and the upper bound to Erd\H{o}s and S\'ark\"ozy~\cite{ES}, were both known by 1990. 
Bypassing a bottleneck which we shall elaborate on below, we give a polynomial improvement to the upper bound on $h(n)$, namely, $h(n)\leq n^{\sqrt{2} - 1 + o(1)}$.

The principal tool used in the proof of this result, and the  
main result of this paper, is a homogeneous strengthening of a seminal result of Szemer\'edi and Vu~\cite{SV2} about the existence of generalized arithmetic progressions in subset sums. Before saying more about non-averaging sets, let us describe this result in more detail.

\subsection{Homogeneous generalized arithmetic progressions in subset sums}

Given a set or a sequence $A$ of integers, the \emph{set of subset sums} $\Sigma(A)$ is the set of all integers representable as a sum of distinct elements from $A$. That is, 
\[\Sigma(A) = \left\{ \sum_{s \in S} s : S \subseteq A \right\}.\]

One of the fundamental results about subset sums is the following theorem of Szemer\'edi and Vu~\cite{SV}. 

\begin{thm}[Szemer\'edi--Vu \cite{SV}]\label{SVthm}
There is a constant $C$ such that if $A \subset [n]$ with $|A| \geq C\sqrt{n}$, then $\Sigma(A)$ contains an arithmetic progression of length $n$.
\end{thm}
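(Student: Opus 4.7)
The plan is to prove the theorem by a two-stage strategy: first locate a generalized arithmetic progression (GAP) of bounded dimension in $\Sigma(A)$, and then iteratively reduce its dimension until only a one-dimensional AP of length at least $n$ remains. The starting point is the observation that $\Sigma(A)$ already has substantial additive structure: since $\Sigma(A) + \Sigma(A)$ is essentially absorbed by doubling the number of summands, $\Sigma(A)$ has small doubling, and for any $A_0 \subseteq A$ with $|A_0| \gtrsim \sqrt{n}$ the set $\Sigma(A_0)$ is a positive-density subset of an interval of length $O(n |A_0|)$.

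\emph{Stage 1 (Find a bounded-dimensional GAP).} Select $A_0 \subseteq A$ with $|A_0| = \Omega(|A|)$ for which Freiman-type reasoning, applied to the highly structured set $\Sigma(A_0)$, produces a GAP $Q_0 \subseteq \Sigma(A_0)$ of bounded dimension $k_0 = O(1)$ and size $\Omega(n)$. Concretely, a dyadic decomposition of $A$ by magnitude reduces matters to the case where all elements are of the same order, and in that regime an application of Freiman's theorem (or a pigeonhole-plus-Fourier input) gives the desired GAP.

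\emph{Stage 2 (Straighten the GAP).} Using elements of $A \setminus A_0$, iteratively collapse one dimension of the GAP at a time. If $Q_0 = \{q + x_1 d_1 + \dots + x_{k_0} d_{k_0} : 0 \leq x_i < L_i\}$, one uses further subset sums from $A \setminus A_0$ whose residues modulo $d_1$ are suitably well distributed to ``fill in'' the gaps along the other coordinates, thereby producing a $(k_0-1)$-dimensional GAP with only a constant-factor loss in size. After $k_0$ iterations one obtains a genuine AP in $\Sigma(A)$. Since we began with $|A| \geq C\sqrt{n}$ and both stages lose only constant factors, choosing $C$ large enough guarantees that the final AP has length at least $n$.

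\emph{Main obstacle.} The delicate point is Stage 2: at each dimension-reduction step one must guarantee that enough of $A$ remains unused and that these leftover elements are equidistributed enough in the relevant residue classes to eliminate a dimension without degrading the size too much. A naive implementation produces an extra $\log n$ factor in the required size of $A$ (as in earlier work of S\'ark\"ozy), and avoiding this loss is precisely the core novelty of Szemer\'edi and Vu; it requires a simultaneous multi-scale refinement of the subset $A_0$ and the GAP $Q_0$, rather than running Stage 1 once and Stage 2 afterwards in series.
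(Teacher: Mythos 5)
The paper does not prove Theorem~\ref{SVthm}; it is quoted from \cite{SV} and used as background. (A strengthening of it does follow from Theorem~\ref{thm:hom-AP} with $k=2$, but via the full machinery of Sections 2--5, which bears little resemblance to your sketch.) So the comparison here has to be with the genuine Szemer\'edi--Vu argument and with the logical soundness of your sketch on its own terms.

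There is a concrete gap at the very start of Stage~1. You assert that for any $A_0\subseteq A$ with $|A_0|\gtrsim\sqrt n$, the set $\Sigma(A_0)$ is a positive-density subset of an interval of length $O(n|A_0|)$. This is false in general: take $A_0=\{1,2,\dots,m\}$ with $m\asymp\sqrt n$. Then $\Sigma(A_0)=[0,m(m+1)/2]$ has size $\Theta(m^2)=\Theta(n)$, whereas $n|A_0|\asymp n^{3/2}$, so the density in that interval is $\Theta(1/\sqrt n)\to 0$. The quantity that controls the natural ambient interval is $\sum_{a\in A_0}a$, not $n|A_0|$, and establishing that $\Sigma(A_0)$ (or the subset sums of a carefully chosen subset) is dense in a suitably large interval is itself a significant lemma in this subject (going back to Freiman and S\'ark\"ozy), not a free observation. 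Without it, the small-doubling/Freiman input you want to invoke has nothing to grab onto, and you don't get a GAP $Q_0\subseteq\Sigma(A_0)$ of size $\Omega(n)$ for free; Freiman's theorem gives a GAP \emph{containing} a set of small doubling, and converting that to a large GAP \emph{inside} the set is exactly where the real work lies.

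Stage~2 is, as you say, the heart of the matter, and your sketch leaves it as a wish-list: you need the leftover elements of $A$ to be well distributed in the relevant residue classes at every scale, and you correctly note that the naive version loses a $\log n$. But the fix you allude to (``a simultaneous multi-scale refinement of $A_0$ and $Q_0$'') is essentially a restatement of the difficulty rather than a plan. In Szemer\'edi and Vu's actual proof, the selection of which elements go into building the progression and which are reserved to ``straighten'' it is done jointly with the structural analysis, through a careful partition of $A$ by scale and an inductive scheme; the two stages cannot be run in series as you describe, and nothing in your write-up indicates how to interleave them. As it stands, the proposal identifies a reasonable high-level shape for the argument but contains a false step (the density claim) and leaves the genuinely hard steps unaddressed.
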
 

The bound in Theorem \ref{SVthm} is easily seen to be best possible up to the constant factor $C$ by considering, for example, the set of all positive integers up to $\lfloor \sqrt{2n} - 2\rfloor$. 
This theorem improved on earlier results of Freiman~\cite{Frei3} and S\'{a}rk\"{o}zy~\cite{Sar2}, who both showed that there is a constant $C$ such that if $|A|\ge C\sqrt{n\log n}$, then $\Sigma (A)$ contains an arithmetic progression of length at least $n$. However, it also loses something, because the Freiman--S\'ark\"ozy result gives not only an arithmetic progression, but a {\it homogeneous progression}, an arithmetic progression $a, a + d, \dots, a + kd$ where the common difference $d$ divides $a$ and, hence, every other term in the progression. The natural question, raised by several groups of authors~\cite{ES, Sar3, TVW}, of whether there is a common strengthening of the Szemer\'edi--Vu and Freiman--S\'ark\"ozy theorems was recently answered in the affirmative by the authors~\cite{CFP}.

\begin{thm}[Conlon--Fox--Pham~\cite{CFP}] \label{thm:Sze-Vu-2}
There is a constant $C$ such that if $A \subset [n]$ with $|A| \geq C\sqrt{n}$, then $\Sigma(A)$ contains a homogeneous progression of length $n$.
\end{thm}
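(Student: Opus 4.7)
The plan is to deduce Theorem 1.2 from Theorem 1.1 by a two-step \emph{produce-then-homogenize} strategy. First, I would partition $A$ into two disjoint pieces $A = A_1 \sqcup A_2$ with $|A_1|$ just above the threshold required by Theorem 1.1 and $|A_2|$ comprising the bulk of $A$; this is possible after enlarging the constant $C$ in the hypothesis. Applying Theorem 1.1 to $A_1$ produces an arithmetic progression $P = \{a, a+d, a+2d, \ldots, a+(n-1)d\}$ contained in $\Sigma(A_1)$. Since every term of $P$ lies in $\Sigma(A_1) \subseteq [0, n|A_1|]$, the common difference satisfies $d \leq 2|A_1|$, which keeps $d$ small compared to $|A_2|$.

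Next, I would seek a shift $e \in \Sigma(A_2)$ with $e \equiv -a \pmod{d}$. If such an $e$ is found, then the translated progression $P + e \subseteq \Sigma(A_1) + \Sigma(A_2) \subseteq \Sigma(A)$ has length $n$, common difference $d$, and begins at $a+e$, a multiple of $d$, so it is homogeneous. To produce $e$, I would invoke a subset-sum covering result in $\mathbb{Z}/d\mathbb{Z}$: a consequence of Olson's theorem on the Davenport constant $D(\mathbb{Z}/d\mathbb{Z}) = d$ shows that once $|A_2|$ is a sufficiently large multiple of $d$ and the residues of $A_2$ modulo $d$ generate $\mathbb{Z}/d\mathbb{Z}$, the set $\Sigma(A_2)$ surjects onto $\mathbb{Z}/d\mathbb{Z}$.

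The main obstacle is the case when $A$ concentrates inside a single coset of a proper subgroup $d'\mathbb{Z}/d\mathbb{Z}$ for some divisor $d' \mid d$, since then the residues of $A_2$ cannot generate $\mathbb{Z}/d\mathbb{Z}$ and the desired shift may not exist. To handle this, I would choose the partition $A = A_1 \sqcup A_2$ adaptively after examining the distribution of $A$ across residues modulo small moduli; in the extreme case that nearly all of $A$ lies in a single residue class modulo some $d' > 1$, I would factor out the common factor and recurse on a rescaled copy of $A$ inside $[n/d']$, where any progression produced is automatically homogeneous after multiplication by $d'$. Making this recursion and adaptive partition compatible with the sharp threshold $C\sqrt{n}$ (rather than a weaker $C\sqrt{n\log n}$-type bound) is where I expect the real work to lie.
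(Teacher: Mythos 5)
The statement you are proving is Theorem~\ref{thm:Sze-Vu-2}, which the present paper cites from~\cite{CFP} without reproducing its proof, so there is no internal proof to compare against; I will assess your proposal on its own terms. The central step has a genuine gap. You claim that once $|A_2|$ is a large constant multiple of $d$ and the residues of $A_2$ modulo $d$ generate $\mathbb{Z}/d\mathbb{Z}$, then $\Sigma(A_2)$ surjects onto $\mathbb{Z}/d\mathbb{Z}$. This is false. Take $A_2 = \{d, 2d, \dots, (m-1)d, 1\}$, a set of $m$ distinct positive integers in $[n]$ whenever $(m-1)d \le n$; its residues modulo $d$ are $\{0,1\}$, which generate $\mathbb{Z}/d\mathbb{Z}$, yet $\Sigma(A_2) \bmod d = \{0,1\}$. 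Since Theorem~\ref{SVthm} gives no control on the common difference $d$ beyond $d \le |A_1|$, and with $|A_1|, |A_2| = \Theta(\sqrt{n})$ and $d$ possibly as large as $\Theta(\sqrt n)$, such a configuration is genuinely possible. The correct hypothesis for this kind of covering lemma is a quantitative non-concentration condition (for every proper subgroup $H \le \mathbb{Z}/d\mathbb{Z}$ and every coset of $H$, enough residues lie outside that coset), not mere generation; this is exactly the kind of ``stability'' condition the present paper has to work hard to engineer via preprocessing (see Subsections~\ref{subsec:stability} and~\ref{subsec:processing}).

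The fallback you offer does not close this gap. You propose to choose the partition $A = A_1 \sqcup A_2$ adaptively ``after examining the distribution of $A$ across residues modulo small moduli,'' but the relevant modulus $d$ only emerges \emph{after} Theorem~\ref{SVthm} has been applied to $A_1$, and that theorem is a black box giving no handle on $d$ or on the starting point $a$; you cannot arrange $A_2$'s residues with respect to a modulus you do not yet know. Moreover, the recursion in the extreme case is lossy: if essentially all of $A$ lies in $d'\mathbb{Z}$, rescaling to $B = (A \cap d'\mathbb{Z})/d' \subseteq [n/d']$ and applying the theorem there yields a homogeneous progression of length $n/d'$, not $n$; multiplying by $d'$ scales the difference but not the number of terms. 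Since each recursive step discards a constant fraction of $A$ and only multiplies $d'$ by a bounded amount, you cannot recover the missing length by iterating. Finally, the case where $A$ concentrates in a \emph{nonzero} residue class modulo $d'$ (so there is no common factor to ``factor out'') is not addressed at all. These are not presentational issues but structural obstacles: the ``apply Theorem~\ref{SVthm} as a black box, then homogenize by shifting'' strategy runs aground precisely because it gives no control over $d$ or $a$, which is why the actual proofs (in~\cite{CFP} and here) build the homogeneous structure directly rather than post-processing a non-homogeneous one.
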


Our concern in this paper will be with higher-dimensional analogues of this result. Recall that a \emph{generalized arithmetic progression} or \emph{GAP}, for short, is a set of the form 
$$Q=\{x+\sum_{i=1}^{d}n_{i}q_{i}:0\le n_{i}\le w_{i}-1\},$$
where $d, x, q_1, \dots, q_d, w_1, \dots, w_d$ are integers with $d$, the {\it dimension}, and all the $w_i$ positive. Here and throughout, we will implicitly assume that the $n_i$ are also integers. We say that the GAP is \emph{proper} if all sums in the definition are distinct or, equivalently, if $|Q| = w_1 w_2 \cdots w_d$. Generalizing Theorem~\ref{SVthm} above, Szemer\'edi and Vu~\cite{SV2} proved the following result.

\begin{thm}[Szemer\'edi--Vu \cite{SV2}]\label{SVthm2}
For every positive integer $k$, there are positive constants $C$ and $c$ such that if $A$ is a subset of $[n]$ of size $m \ge Cn^{1/k}$, then, for some $d\le k-1$, $\Sigma(A)$ contains a proper $d$-dimensional GAP $P$ of size at least $cm^{d+1}$. 
\end{thm}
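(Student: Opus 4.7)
The plan is to argue by induction on $k$. The base case $k=2$ is exactly Theorem~\ref{SVthm}: if $|A|\geq C\sqrt{n}$, then $\Sigma(A)$ contains an arithmetic progression (a $1$-dimensional GAP) of length at least $n\geq (|A|/C)^2$, matching the required lower bound $c|A|^2$ with $d=1$.

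For the inductive step, given $A\subseteq[n]$ with $|A|=m\geq Cn^{1/k}$, I would decompose $A=B\sqcup A'$ so that $B$ lies in a much shorter sub-interval $I\subseteq[n]$ of some intermediate length $N$, with $B$ dense enough in $[N]$ to satisfy the hypothesis of the inductive statement at parameter $k-1$. A pigeonhole (dyadic or averaging) argument over translates of $I$ would produce such an interval, after which the inductive hypothesis supplies a proper $d'$-dimensional GAP $Q\subseteq\Sigma(B)$ with $d'\leq k-2$ and $|Q|\geq c'|B|^{d'+1}$. The leftover $A'$ still has $\Omega(m)$ elements; I would then invoke Theorem~\ref{SVthm} (after further restriction of $A'$, or by recursing on $A'$) to obtain an arithmetic progression $R\subseteq\Sigma(A')$ of length $\asymp m^2$ whose common difference can be taken larger than the diameter of $Q$. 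Then $R+Q\subseteq\Sigma(A)$ is a proper $(d'+1)$-dimensional GAP of size $|R|\cdot|Q|\gtrsim m^{d'+2}$, which yields the claim with $d=d'+1\leq k-1$.

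The main obstacle is executing the decomposition step cleanly at the extremal density $m\asymp n^{1/k}$. A naive averaging pigeonhole does not produce a sub-interval $I$ on which $B$ is dense enough to invoke the inductive hypothesis: the required length $N\asymp (n/m)^{(k-1)/(k-2)}$ already exceeds $n$ at this threshold. To overcome this, one must strengthen the inductive hypothesis (for instance, by tracking additional control over the step sizes and ambient interval of the GAP produced), or invoke tools from additive combinatorics such as Pl\"unnecke--Ruzsa inequalities or Freiman-type structure theorems to control the growth of subset sums at each stage. A secondary difficulty is that Theorem~\ref{SVthm} cannot be applied to $A'\subseteq[n]$ directly when $m\ll\sqrt{n}$; one needs either a further recursion within $A'$ or an explicit construction of a small sub-collection of $A'$ whose subset sums form the required AP. Ensuring that the combined GAP $R+Q$ is proper is then routine, achieved by passing to a sub-progression of $R$ whose common difference exceeds the diameter of $Q$.
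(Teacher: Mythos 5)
Your proposal takes a genuinely different route from the paper's, and it contains a gap that you yourself identify but do not close, so the argument as written does not constitute a proof. Let me be precise about the two places where it fails. First, as you observe, the pigeonhole/averaging step that would produce a short sub-interval $I$ of length $N$ on which $A$ is dense enough to invoke the inductive hypothesis at parameter $k-1$ requires $N\gtrsim (n/m)^{(k-1)/(k-2)}$, which exceeds $n$ precisely when $m\asymp n^{1/k}$; the decomposition is impossible in exactly the regime that matters. Second, the properness step is not routine: to make $R+Q$ proper you must pass to a sub-progression $R'$ of $R$ with common difference exceeding $\mathrm{diam}(Q)$, which thins $R$ by a factor of roughly $\mathrm{diam}(Q)/a$, where $a$ is the original common difference. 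Since $\mathrm{diam}(R')\le mn$ and $\mathrm{diam}(Q)\gtrsim |Q|\gtrsim |B|^{d'+1}$, while $|Q|\cdot|R'|$ must reach $m^{d'+2}$, a short computation forces $N\lesssim n/m$, which is incompatible with the lower bound on $N$ just derived. Your suggested remedies --- a strengthened inductive hypothesis, or Freiman/Pl\"unnecke tools --- point in the right direction but are not executed, so the gap remains.

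By contrast, the paper's proof of the stronger Theorem~\ref{thm:hom-AP} (and Szemer\'edi--Vu's own proof in \cite{SV2}) avoids spatial decomposition entirely. Rather than locating a short interval on which $A$ is dense, the paper analyzes the doubling sequence $|2^zA|$ and applies Freiman's theorem in the form of Bilu's refinement (Lemma~\ref{lem:bilu}) at the step of slowest growth, producing a single GAP $P$ containing $A$ such that $hA$ contains a proper translate of $chP$ (Lemma~\ref{lem:AP-from-doubling}). It then randomly partitions a preprocessed stable subset $\hat A$ into $\ell$ pieces, runs a greedy growth argument inside each piece (via Lemma~\ref{lem:growing-sum}) to find small subsets $A_i'$ with $|\Sigma(A_i')|$ comparable to $|\tfrac{s}{\ell}(\hat A\cup\{0\})|$, and glues these together via the higher-dimensional Lev-type Lemma~\ref{lem:ell-box}. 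The Freiman-type structure theorem you mention in passing is indeed the essential tool, but it is applied to the multifold sumsets $hA$ directly rather than to repair a dyadic decomposition; this is what allows the argument to reach the sharp threshold $m\gtrsim n^{1/k}$ without any induction on $k$.
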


For instance, when $k = 3$, this result says that there are positive constants $C$ and $c$ such that if $A \subset [n]$ with $|A| \geq C n^{1/3}$, then $\Sigma(A)$ contains either an arithmetic progression of length at least $c |A|^2$ or a $2$-dimensional GAP of size at least $c|A|^3$. Moreover, a construction of Szemer\'edi and Vu~\cite[Section 3]{SV2} shows that the theorem is essentially best possible.

Following the notation above, we say that a GAP is \emph{homogeneous} if $\gcd(q_{1},\dots,q_{d}) | x$, which clearly generalizes the definition of homogeneous for ordinary $1$-dimensional arithmetic progressions. In light of Theorem~\ref{thm:Sze-Vu-2}, it is natural to ask whether there is also a homogeneous version of Theorem~\ref{SVthm2}. The following result gives a positive answer to this question.

\begin{thm} \label{thm:hom-AP} 
For every positive integer $k$, there are positive constants $C$ and $c$ such that if $A$ is a subset of $[n]$ of size $m \ge Cn^{1/k}$, then, for some $d  \le k-1$, $\Sigma(A)$ contains a proper homogeneous $d$-dimensional GAP $P$ of size at least $cm^{d+1}$. 
\end{thm}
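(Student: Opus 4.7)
I would derive Theorem~\ref{thm:hom-AP} from the non-homogeneous version (Theorem~\ref{SVthm2}) by homogenizing its output, in the spirit of the one-dimensional argument behind Theorem~\ref{thm:Sze-Vu-2}. The key idea is that if a proper GAP $P_0$ lies in $\Sigma(A_1)$ for some $A_1 \subseteq A$, then translating $P_0$ by a subset sum $s$ of the remaining elements $A_2 = A \setminus A_1$ preserves the common differences, widths, and properness but shifts the base; hence a well-chosen $s$ can force homogeneity, provided $\Sigma(A_2)$ hits the correct residue class modulo the gcd of the common differences of $P_0$.

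Concretely, I would first partition $A = A_1 \sqcup A_2$ with $|A_1|, |A_2| \geq m/2$ and apply Theorem~\ref{SVthm2} to $A_1$, obtaining a proper $d$-dimensional GAP
\[ P_0 = \Bigl\{\,x + \sum_{i=1}^d n_i q_i : 0 \leq n_i < w_i\,\Bigr\} \subseteq \Sigma(A_1), \qquad \prod_{i=1}^d w_i \geq c_0\, m^{d+1}, \]
for some $d \leq k-1$. Setting $g = \gcd(q_1, \dots, q_d)$, any $s$ satisfying $g \mid x + s$ produces a proper homogeneous $d$-dimensional GAP $P_0 + s \subseteq \Sigma(A)$ of the same size as $P_0$. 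Thus the theorem reduces to showing that, for a suitable partition, $\Sigma(A_2)$ meets the residue class $-x \bmod g$.

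The hard part is precisely this residue-hitting step, since $g$ can be large: the constraints $\sum_i w_i q_i \lesssim mn$ and $\prod_i w_i \geq c_0 m^{d+1}$ only yield $g \lesssim n\, m^{-1/d}$, which for $k \geq 3$ can exceed $|A_2|^2$. When $g$ is small in this sense, an Erd\H{o}s--Ginzburg--Ziv style argument applied to $A_2 \bmod g$ shows that $\Sigma(A_2)$ surjects onto $\mathbb{Z}/g\mathbb{Z}$. When $g$ is large, the direct approach fails, but then $A$ must concentrate on a small number of residue classes modulo some divisor $g'$ of $g$, for otherwise its subset sums would already span all residues mod $g$. Passing to such a dominant class, translating, and rescaling by $g'$ then produces a denser subset of a shorter ambient interval, enabling an inductive argument that reduces the problem to a homogeneous GAP of one lower dimension. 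Iterating this dichotomy until $g$ becomes manageable would close the argument.
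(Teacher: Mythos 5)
Your route is genuinely different from the paper's: you invoke Theorem~\ref{SVthm2} as a black box and then try to repair homogeneity by shifting the output GAP by a well-chosen element of $\Sigma(A_2)$, whereas the paper does not use Theorem~\ref{SVthm2} at all---it proves the intermediate Theorem~\ref{thm:hom-AP-build} (producing a GAP $P$ that \emph{contains} $\hat{A}\cup\{0\}$ and a small $A'\subseteq\hat{A}$ with $\Sigma(A')\supseteq$ a homogeneous translate of $csP$) and then enlarges that GAP by adding the remaining elements of $\hat A$, using zonotope approximation (Lemma~\ref{lem:build-GAP}) and the Tao--Vu discrete John theorem (Lemma~\ref{lem:TV-convexbody}). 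The alignment ``$\hat{A}\cup\{0\}\subseteq P$'' is precisely what makes the extension step go through, and it is information that a black-box application of Theorem~\ref{SVthm2} cannot supply.

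The gap in your proposal is concentrated in the large-$g$ branch of your dichotomy. You correctly observe that $g=\gcd(q_1,\dots,q_d)$ can be as large as roughly $n m^{-1/d}$, which for $k\ge 3$ exceeds any polynomial in $|A_2|$, so the residue-hitting step can genuinely fail; but the proposed remedy---``pass to a dominant residue class mod $g'$, translate, rescale, and recurse''---does not survive inspection. If $A$ concentrates in the coset $r+g'\mathbb{Z}$ with $r\ne 0$ and we set $A = r+g'B$, then $\Sigma(A)=\{|S|r+g'\sum_{i\in S}b_i : S\subseteq B\}$: the term $|S|r$ couples the cardinality of the subset to the rescaled sum, so a homogeneous GAP inside $\Sigma(B)$ does \emph{not} pull back to a GAP (let alone a homogeneous one) inside $\Sigma(A)$. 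Removing this coupling requires precisely the kind of fine control over the relationship between the ambient GAP and $A$ that Theorem~\ref{thm:hom-AP-build} is built to deliver. A second, more minor, issue: even in your ``small $g$'' branch, $|A_2|\ge g$ is not by itself enough for $\Sigma(A_2)$ to cover $\mathbb{Z}/g\mathbb{Z}$---one needs $A_2\bmod g$ not to live in a coset of a proper subgroup, so the dichotomy is really structural rather than merely a size threshold, and folding that structural case back into the recursion meets the same $r\ne0$ obstruction. As it stands, the argument has a genuine hole at its central step.
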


As well as being interesting in its own right, it was already pointed out by Nguyen and Vu~\cite{NV} that such a result can be considerably simpler to apply than its non-homogeneous counterpart. More than this, for some applications, homogeneity seems to be essential. Our result on non-averaging sets, which we discuss further in the next subsection, is such an example. 

The main step in proving Theorem \ref{thm:hom-AP} is to establish the
following intermediate result, which is already sufficient for our 
application to non-averaging sets. Given a homogeneous GAP $Q=\{\sum_{i=1}^{d}n_iq_i : a_i\le n_i \le b_i\}$ and a positive real number $c$, we let $cQ = \{\sum_{i=1}^{d} n_iq_i : ca_i\le n_i\le cb_i\}$. We say that $cQ$ is proper if all the sums in the definition are distinct. 

\begin{thm} \label{thm:hom-AP-build} 
For any $\beta>1$ and $0<\eta<1$, there are positive constants $c$ and $d$ such that the following holds. Let $A$ be a subset of $[n]$ of size $m$ with $n\le m^{\beta}$ and let $s\in[m^{\eta},cm/\log m]$. Then there exists a subset $\hat{A}$ of $A$ of size at least $m-c^{-1}s\log m$, a proper GAP $P$ of dimension at most $d$ such
that $\hat{A} \cup \{0\}$ is contained in $P$ and a subset
$A'$ of $\hat{A}$ of size at most $s$ such that $\Sigma(A')$
contains a homogeneous translate of $csP$, where $csP$ is proper. 
\end{thm}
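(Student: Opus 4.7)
The plan is to split the argument into a \emph{structural step}, which produces the proper GAP $P$ and the subset $\hat A$, and a \emph{generation step}, which produces $A' \subseteq \hat A$ whose subset sums cover a homogeneous translate of $csP$. A key observation that simplifies the homogeneity requirement is the following: once $P$ contains $0$, we may write $P$ as a homogeneous GAP $\{\sum_{i=1}^d n_i q_i : a_i \le n_i \le b_i\}$ with $a_i \le 0 \le b_i$, so setting $q := \gcd(q_1, \dots, q_d)$, every element of $P$ (and therefore of $\hat A$) is a multiple of $q$, as is every element of $\Sigma(A')$. Since $csP \subseteq q\mathbb Z$ as well, any translate $y + csP$ contained in $\Sigma(A')$ must have $q \mid y$, i.e., be a homogeneous translate. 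Thus homogeneity comes for free once $P$ has been engineered to contain $0$, and the task reduces to producing the non-homogeneous containment.

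For the structural step, I would apply a Freiman-type covering theorem, closely related to the one underlying Theorem~\ref{SVthm2} of Szemer\'edi--Vu, to the set $A \cup \{0\}$. The polynomial density afforded by $n \le m^\beta$ is exactly the regime in which such covering lemmas give a proper GAP $P_0$ of dimension $d = d(\beta)$ and size $O(m)$ covering all but $O(s \log m)$ elements of $A \cup \{0\}$. Since $0$ is one of the covered elements, $P_0$ contains $0$ and is homogeneous. A rectification argument, possibly passing to a sub-GAP $P$ with sparser generators, is then used to ensure that the dilate $csP$ is still proper; here the upper bound $s \le cm/\log m$ controls how much dilation is admissible before properness breaks down. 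The factor $\log m$ in the size of the exceptional set is the cumulative cost of iterating the covering across the $d$ coordinates, mirroring the one-dimensional loss in the proof of Theorem~\ref{thm:Sze-Vu-2}.

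For the generation step, I would work in the coordinate representation $a \mapsto \mathbf n(a) \in \mathbb Z^d$ afforded by $P$, reducing the problem to finding $A' \subseteq \hat A$ with $|A'| \le s$ whose coordinate sumset contains a translate of the scaled box $cs \cdot \prod_{i=1}^d [a_i, b_i]$. The natural strategy is induction on the dimension $d$. The base case $d=1$ is the subset-sum generation statement at the heart of Theorem~\ref{thm:Sze-Vu-2}, which produces long homogeneous APs from small subsets of a dense set. For $d > 1$, I would foliate $P$ into slices along the first coordinate, apply the inductive hypothesis inside sufficiently dense slices to obtain small subsets generating $(d-1)$-dimensional homogeneous blocks, and then combine these blocks across $\asymp cs(b_1 - a_1)$ consecutive slices using the additive structure of the first generator to fill out the full $d$-dimensional box.

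The hardest point, I expect, is engineering the structural step so that the dilate $csP$ is proper: a generic proper GAP need not dilate to a proper GAP, and preserving properness requires placing the generators $q_i$ in ``additive general position'' up to the scale $cs$, a condition reminiscent of Freiman $cs$-isomorphism between $P$ and $csP$. Arranging this simultaneously with the constraints that $P$ covers $\hat A \cup \{0\}$ and has bounded dimension and size $O(m)$ is where I would expect the bulk of the technical work to lie. A secondary difficulty is balancing the iterated covering so that the exceptional set does not exceed $c^{-1} s \log m$, which requires careful tracking of the loss at each stage. Once these obstacles are resolved, the generation step should proceed along the inductive blueprint above with only routine quantitative bookkeeping, using the one-dimensional argument from~\cite{CFP} as the base case.
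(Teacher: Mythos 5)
Your observation that homogeneity comes for free once $P$ is arranged to contain $0$ is correct and does match what the paper implicitly does: since every element of $P$ is then a multiple of $q=\gcd(q_1,\dots,q_d)$, so is every element of $\hat A$ and of $\Sigma(A')$, and $0\in csP$, forcing any translate $y+csP$ inside $\Sigma(A')$ to have $q\mid y$. This is the right reduction. Unfortunately, both the structural step and the generation step, as you have set them up, rest on assumptions that fail on exactly the examples this theorem has to handle.

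For the structural step, applying a Freiman-type covering theorem directly to $A\cup\{0\}$ will not produce a bounded-dimensional GAP: $A$ need not have small doubling. Take the Bosznay-type set $A=\{iq^3+i(i+1)/2:i\in[q-1]\}$ with $m\approx q$ and $n\approx q^4$ (so $\beta\ge 4$). Here $|A+A|\approx|A|^2$, so Freiman on $A$ gives a GAP of dimension growing with $|A|$, which is useless. The paper avoids this by studying the iterated doubling $2^zA$, locating the value of $z$ at which the growth is slowest, and only then invoking Freiman/Bilu on $2^{y+1}A$; that is what the $h$-dimension and $d$-bounding box machinery (Lemma~\ref{lem:AP-from-doubling}, Lemma~\ref{lem:lower-hA}) is doing, and it also directly supplies the properness of $c_\beta h P_d(A)$ without any separate rectification. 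Your further claim that the covering GAP has size $O(m)$ is also incorrect: in the Bosznay example the $2$-bounding box has volume $\approx m^3$, so $\hat A$ is extremely sparse inside $P$. Nor does the exceptional set of size $O(s\log m)$ arise from iterating a covering over coordinates; in the paper it comes from the iterated stability reduction (Lemma~\ref{lem:reduction-stable}), which repeatedly removes $O(s)$ elements over $O(\log n)$ rounds so that bounding boxes and spans stay stable under passing to large random subsets.

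The generation step has a more serious gap. The slice-by-slice induction requires $\hat A$ to be dense inside $(d-1)$-dimensional fibers of $P$, and this is false in general. In the Bosznay example (and in the applications to non-averaging sets, which are precisely the point of this theorem), $\hat A$ meets each one-dimensional fiber of the $2$-dimensional bounding box in a bounded number of points, so the inductive hypothesis never gets off the ground; also, by a simple count, $|A'|\le s$ elements cannot populate $\asymp cs(b_1-a_1)$ slices with one inductive subset each. The paper's route is genuinely different and does not reduce on dimension at all: after randomly partitioning $\hat A$ into $\ell$ pieces $A_i$ that inherit stability, it runs a greedy process (Lemma~\ref{lem:growing-sum}, building on Erd\H{o}s--Heilbronn/Olson) that relates the growth of $|\Sigma(A_i')|$ to the sizes of the multifold sumsets of the remaining elements; the $h$-dimension machinery then controls those multifold sumsets, yielding $|\Sigma(A_i')|\gg|\frac{s}{\ell}(\hat A\cup\{0\})|$. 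The $\ell$ dense sets $\Sigma(A_i')$, each a positive fraction of $\frac{s}{\ell}P$, are then combined via a higher-dimensional analogue of Lev's theorem (Lemma~\ref{lem:ell-box}) to fill out a translate of $\gamma sP$. That combination step, and the duality between ``almost-periods'' of $\Sigma(j)$ and multifold sumsets of the not-yet-used elements, are the genuinely new ideas your outline does not capture, and they are precisely what makes the proof work for sparse sets like Bosznay's.
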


This result is clearly tight up to the constant $c$, since, if $\hat{A} \cup \{0\}$ is contained in $P$, then $\Sigma(A')$ is contained in $sP$ for any subset $A'$ of $\hat{A}$ of size $s$. This almost tight relationship between the homogeneous GAP that we find in $\Sigma(A')$ and the GAP containing $\hat{A}\cup \{0\}$ will be crucial for our application to non-averaging sets. 

To deduce Theorem \ref{thm:hom-AP} from Theorem \ref{thm:hom-AP-build}, one starts with the large homogeneous GAP $Q=csP$ in $\Sigma(A')$ guaranteed by Theorem \ref{thm:hom-AP-build} 
and adds elements of $A$ to enlarge the GAP. This is fairly straightforward in the one-dimensional case, because we can make use of the simple observation that, for any interval $I$ of length at least $n$ and any $0\le a\le n$, the sumset $I+\{0,a\}$ is an interval of length $|I|+a$. 
However, the multidimensional case is more subtle, as, given a large GAP $Q$, the set $Q+\{0,a\}$ is not necessarily a larger GAP of the same dimension. 
To circumvent this, instead of directly finding a large GAP inside $Q+\sum_{a\in A\setminus A'}\{0,a\}$, we first show that the set of points $Q+\sum_{a\in A\setminus A'}\{0,a\}$ is essentially the projection of the intersection of a convex body and a lattice and then use this structure and a discrete John-type theorem of Tao and Vu~\cite{TV} to find the desired large homogeneous GAP inside $\Sigma(A)$. 

The proof of our main technical result, Theorem \ref{thm:hom-AP-build}, has several steps, roughly as follows:
\begin{enumerate}
\item We preprocess $A$ to obtain a dense subset $\hat{A}$ of $A$ with certain useful properties. 
\item We partition $\hat{A}$ randomly into $\ell$ sets ${A}_1, \dots, {A}_\ell$ of roughly equal size which inherit these properties.
\item We show that there are subsets $A_i'$ of ${A}_i$, each of size $s/\ell$, such that $|\Sigma(A'_i)| \gg |\frac{s}{\ell} (\hat{A}\cup \{0\})|$, where $\frac{s}{\ell} (\hat{A}\cup \{0\})$ is the $\frac{s}{\ell}$-fold sumset of $\hat{A}\cup \{0\}$. 
\item We show that there is a GAP $P$ containing $\hat{A}\cup \{0\}$ such that $\frac{s}{\ell} (\hat{A}\cup \{0\})$ is dense in $\frac{s}{\ell} P$ and $c s P$ is proper. 
\item From the previous two steps, $\Sigma(A'_i)$ is dense in $\frac{s}{\ell} P$, allowing us to show that the sum of the sets $\Sigma(A'_i)$ contains a proper homogeneous translate of $c' sP$. 
\end{enumerate}  

A GAP $P$ containing $\hat{A}\cup \{0\}$ and satisfying the properties in Step 4 can be obtained by analyzing the growth of high-fold sumsets of $\hat{A}\cup \{0\}$, combined with an application of Freiman's celebrated theorem on the structure of sets with small doubling. In fact, we give an almost complete characterization of $h(\hat{A}\cup \{0\})$ for  large $h$ in terms of a GAP of appropriate dimension containing $\hat{A}\cup \{0\}$. Roughly speaking, for each sufficiently large $h$, there is a positive integer $d$, which we call the {\it $h$-dimension} of $\hat{A} \cup \{0\}$, and a GAP $P$ of dimension $d$ containing $\hat{A}\cup \{0\}$, which we call the {\it $d$-bounding box} of $\hat{A} \cup \{0\}$, such that $h(\hat{A}\cup \{0\})$ contains $chP$, where $chP$ is proper, for some appropriate constant $c>0$. Note that this is clearly optimal up to the constant $c$, as $h(\hat{A}\cup \{0\})$ is contained in $hP$.

The reason that we consider a subset $\hat{A}$ of $A$ rather than just $A$ itself is that there are examples where $\Sigma(A'_i)$ is not dense in $\frac{s}{\ell}(A \cup \{0\})$
for any subset $A'_i$ of $A$ of size $s/\ell$. For example, consider the case where $A$ consists of a progression $[m]$ and a single element much larger than $m$. We show that this is in some sense the only example: by performing a preprocessing step where we remove a small number of elements from $A$, we can guarantee that $\Sigma(A'_i)$ is dense in $\frac{s}{\ell}(\hat{A}\cup \{0\})$ and, hence, in $\frac{s}{\ell}P$. This preprocessing step replaces 
$A$ by a subset $\hat{A}$ of $A$ with the property that any reasonably large subset of $\hat{A}$ has similar behavior to $\hat{A}$ with respect to taking high-fold sumsets and, crucially, this property is inherited by the subsets in a random partition of $\hat{A}$. 
 
At this point, in order to find $A'_i$ such that $|\Sigma(A'_i)|$ is large, we use an iterative greedy process that grows $\Sigma(A'_i)$ one element at a time. Let $S_j$ be the set of elements in $A_i$ not yet picked and $\Sigma(j) = \Sigma(A_i\setminus S_j)$. 
To obtain bounds on the increment in $|\Sigma(j)|$ at each step, we observe a duality between the size of this increment and the multifold sumsets of $S_j$: roughly speaking, if all elements of $S_j$ are ``almost periods'' of $\Sigma(j)$, i.e., their addition does not increase $|\Sigma(j)|$ significantly, then there is a large $k$ for which $|kS_j|$ is small. Since $S_j$ is itself a reasonably large subset of $A_i$, the size of $kS_j$ is captured by an appropriate GAP containing $A_i$. This allows one to estimate $|\Sigma(j)|$ via the size of $h(\hat{A}\cup \{0\})$ for suitable $h$, ultimately leading to the desired claim in Step 3.  

While some of the steps in this strategy bear similarity to the method used in \cite{CFP} to handle the one-dimensional case, the strategy here is largely different and allows one to obtain a much more precise characterization of the structure of $A$ and its set of subset sums. 

\subsection{Non-averaging sets}

Recall that a subset $A$ of $[n]$ is {\it non-averaging} if no element of $A$ is the average of two or more other elements of $A$. The problem of estimating $h(n)$, the maximum size of a non-averaging subset of $[n]$, was first raised by Straus~\cite{Str}. However, his paper gives considerable credit to Erd\H{o}s, who had already asked the closely related problem of estimating the maximum size of a non-dividing subset of $[n]$, where a subset $A$ of $[n]$ is {\it non-dividing} if no element of $A$ divides the sum of two or more other elements of $A$. Because of this, the problem of estimating $h(n)$ is sometimes referred to as the Erd\H{o}s--Straus non-averaging sets problem.

In his original paper, Straus~\cite{Str} showed that $h(n) \geq e^{c \sqrt{\log n}}$ for some positive constant $c$, while, in a follow-up paper~\cite{EStr}, he and Erd\H{o}s showed that $h(n) = O(n^{2/3})$. The lower bound was improved to a polynomial by Abbott, who first showed~\cite{Abb1} that $h(n) = \Omega(n^{1/10})$ and then improved~\cite{Abb2} this bound to $h(n) = \Omega(n^{1/5})$. The current best lower bound, $h(n) = \Omega(n^{1/4})$, which we suspect to be tight, follows from a surprisingly simple construction due to Bosznay~\cite{Bosz}. Indeed, if we fix an integer $q$, then the set of integers consisting of $n_i = i q^3 + i(i+1)/2$ for $i = 1, 2, \dots, q-1$ is a non-averaging subset of $[n]$, where $n = q^4$.

The Erd\H{o}s--Straus upper bound of $h(n)  = O(n^{2/3})$ follows by exploiting a relationship between $h(n)$ and another function $H(n)$. Indeed, if we write $H(n)$ for the maximum integer for which there are two subsets of $[n]$ of size $H(n)$ whose sets of subset sums have no non-zero common element, then a result of Straus~\cite{Str} says that $h(n) \leq 2 H(n) + 2$. What Erd\H{o}s and Straus proved was that $H(n) = O(n^{2/3})$, which then implies the corresponding bound for $h(n)$. Similarly, using the Freiman--S\'ark\"ozy result on homogeneous progressions, Erd\H{o}s and S\'{a}rk\"{o}zy \cite{ES} were able to show that $H(n) = O(\sqrt{n \log n})$, which again yields a similar upper bound on $h(n)$. 

This method was pushed to its limit in our recent paper~\cite{CFP}, where we showed that $H(n) = O(\sqrt{n})$, which is best possible up to the constant factor, as may be seen by considering the sets $[1,c\sqrt{n}]$ and $[n-c\sqrt{n},n]$ for any $c<\sqrt{2}$. Thus, while we have $h(n) = O(\sqrt{n})$, it seems that new tools are needed to push the bound below $\sqrt{n}$. Our results on homogeneous GAPs are just such tools, allowing us to give the first significant improvement of the upper bound on $h(n)$ since Erd\H{o}s and S\'{a}rk\"{o}zy's 1990 paper.

\begin{thm}
\label{thm:non-avg}
There is a constant $C$ such that if $A$ is a subset of $[n]$ with the property that no element of $A$ is equal to the average of two or more other elements of $A$, then $|A| \le C n^{\sqrt{2}-1}(\log n)^{2}$. 
\end{thm}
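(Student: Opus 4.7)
The plan is to argue by contradiction: suppose $A\subseteq[n]$ is non-averaging with $|A|=m$ violating the claimed bound, so $n\le m^{\sqrt 2+1}/(\log n)^{O(1)}$. I would apply Theorem~\ref{thm:hom-AP-build} to $A$ with $\beta$ slightly exceeding $\sqrt 2+1$, a small $\eta>0$, and a parameter $s$ to be optimized (of order $m^{\sqrt 2-1}$). This produces a proper GAP $P$ of bounded dimension $d$ containing $\hat A\cup\{0\}$ (where $\hat A\subseteq A$ has size at least $m-c^{-1}s\log m$), a subset $A'\subseteq\hat A$ of size at most $s$, and a proper homogeneous translate $Q^*=t+csP\subseteq\Sigma(A')$ of size at least $(cs)^d|P|\gtrsim s^d m$.

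The non-averaging condition on $A$ implies $A\cap\bar\Sigma^*(A')=\emptyset$, where $\bar\Sigma^*(A')$ denotes the set of integer averages of subsets of $A'$ of size at least $2$: indeed, if $\bar B=|B|^{-1}\sum_{b\in B}b\in A$ for some $B\subseteq A'$ with $|B|\ge 2$, then setting $a=\bar B$, either $a\notin B$ (contradicting non-averaging of $A$ directly) or $a\in B$, in which case $|B|=2$ would force $B$ to have a repeated element (impossible) while $|B|\ge 3$ makes $a$ the average of $B\setminus\{a\}\subseteq A\setminus\{a\}$ of size at least $2$ (again contradicting non-averaging of $A$). Hence $m+|\bar\Sigma^*(A')|\le n$, and the problem reduces to obtaining a good lower bound on $|\bar\Sigma^*(A')|$. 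For each $q\in Q^*$, picking any representation $q=\sum_{b\in B_q}b$ with $B_q\subseteq A'$, one obtains $q/|B_q|\in\bar\Sigma^*(A')$ whenever $|B_q|\ge 2$ and $|B_q|\mid q$. The homogeneity of $Q^*$ is critical here: it ensures that $Q^*$ lies inside the lattice $\sum_{i}\mathbb{Z}q_i$ generated by the generators of $P$, which controls the divisibility of the $q$'s, and the properness of $csP$ (together with $\hat A\cup\{0\}\subseteq P$) forces unique coordinate representations and hence constrains the possible $B_q$---this is the ``almost tight'' relationship emphasised after Theorem~\ref{thm:hom-AP-build}.

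The main obstacle is the quantitative lower bound on $|\bar\Sigma^*(A')|$. One must exploit both the dimension $d$ of $P$ and the $cs$-fold dilation in $Q^*$ to extract more than $n-m$ distinct integer averages, despite collisions between averages of different sizes and the divisibility constraint $|B_q|\mid q$. A naive pigeonhole on a single scale yields only the Erd\H os--S\'ark\"ozy bound $m\lesssim\sqrt{n\log n}$, so a genuinely multidimensional argument---using the proper GAP $P$ containing $\hat A$ together with the dilated $csP$ inside $\Sigma(A')$, and extracting averages across many sizes $|B_q|$---is essential. This is precisely why the non-homogeneous Theorem~\ref{SVthm2} and the one-dimensional Theorem~\ref{thm:Sze-Vu-2} do not suffice. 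The exponent $\sqrt 2-1=1/(\sqrt 2+1)$ should emerge from balancing the size constraint $s^d m\lesssim sn$ (from $Q^*\subseteq\Sigma(A')\subseteq[0,sn]$) against the density of the extracted averages, with the $(\log n)^2$ factor coming from the $c^{-1}s\log m$ slack in $|\hat A|$.
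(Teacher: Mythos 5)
Your outline correctly identifies the two main tools --- Theorem~\ref{thm:hom-AP-build} and the non-averaging hypothesis combined with homogeneity --- and your observation that $A\cap\bar\Sigma^*(A')=\emptyset$ is a true and clean fact. However, the central quantitative step, a lower bound on $|\bar\Sigma^*(A')|$ of order $n-m\approx n$, is not proved, and I do not see how to prove it along the lines you sketch. The averages $q/|B_q|$ all land in $[1,\max(A')]$, and the collisions you acknowledge are severe: many $q\in Q^*$ share the same quotient, and the divisibility constraint $|B_q|\mid q$ is not controlled merely by $Q^*$ lying in the lattice generated by the differences of $P$. As it stands, the ``count the averages'' reduction is a restatement of the problem rather than a proof, and there is no mechanism in your sketch that would make the exponent $\sqrt 2-1$ appear.

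The paper takes a substantially different route that bypasses exactly the difficulty you ran into. First, it does not work with $h(n)$ directly; it reduces, via the inequality $h(n)\le 2\tilde H(n)+2$, to the quantity $\tilde H(n)$, defined as the largest $m$ for which there exist two non-averaging sets $A,\tilde A\subseteq[n]$ with $\max(A)<\min(\tilde A)$ whose subset sums share no non-zero element. The homogeneity of the progression $csP\subseteq\Sigma(A')$ then enters in a very specific way: if $\Sigma(A)$ contains a homogeneous progression of length greater than $n$ with common difference $a$, then one can greedily split $\tilde A$ into blocks each with sum divisible by $a$ (pigeonhole), and the partial sums of these blocks must eventually land inside the progression, producing a non-zero common element of $\Sigma(A)$ and $\Sigma(\tilde A)$. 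This is the role of homogeneity; it is not about divisibility of the individual $q$'s by $|B_q|$. Second, and crucially, the case $d=2$ is handled by a strong induction on $n$: since a non-averaging set intersected with a one-dimensional fiber $x+[0,w_2-1]q_2$ of $P$ is still non-averaging, the induction hypothesis gives $|\hat A\cap \text{fiber}|\le h(w_2)\lesssim w_2^{\sqrt2-1}(\log w_2)^2$, hence $w_1 w_2^{\sqrt2-1}\gtrsim m$, hence $w_1w_2\gtrsim m^{2/\sqrt2}$, and comparing $|csP|\gtrsim (s)^2 w_1w_2$ with $|\Sigma(A')|\le mn$ yields the contradiction. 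Solving $\alpha^2+2\alpha-1=0$ is where $\sqrt2-1$ comes from. This fiber-by-fiber induction is the missing idea in your proposal; without it, neither the exponent nor the $(\log n)^2$ factor can be extracted.
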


The proof of Theorem \ref{thm:non-avg} makes use of Theorem \ref{thm:hom-AP-build}. As in the previous work on non-averaging sets, we first reduce Theorem \ref{thm:non-avg} to the problem of finding a long arithmetic progression in a certain set of subset sums. When $|A| < \sqrt{n}$, one generally does not expect to have such long arithmetic progressions. However, for $|A| > C n^{\sqrt{2}-1}(\log n)^{2}$, we can apply Theorem \ref{thm:hom-AP-build} to conclude that there is a large subset $\hat{A}$ of $A$ such that either $\Sigma(\hat{A})$ contains a long arithmetic progression or $\hat{A}$ is contained in a $2$-dimensional GAP $P$ where there is a large subset $A'$ of $\hat{A}$ such that $\Sigma(A')$ contains $(c|A'|)P$. 
In the former case, we are done. For the latter case, we can use the assumption that $A$, and hence $\hat{A}$, is non-averaging and a suitable induction hypothesis to show that $\hat{A}$ is not dense on any one-dimensional fiber of $P$. But this then gives an additional gain on the size of $\Sigma(A')$, which ultimately leads to a contradiction.

\subsection*{Notation}

For the sake of clarity of presentation, we omit floor and ceiling signs whenever they are not essential. We
also maintain the convention that all logarithms are base two unless otherwise specified. We use standard asymptotic notation throughout, though we will occasionally write $c_{P}$ and $C_{P}$ for constants depending on certain parameters $P$. 

\section{Multifold sumsets and GAPS}

In this section, we build towards the proof of Theorem \ref{thm:hom-AP-build} by proving a collection of disparate results, primarily about multifold sumsets and GAPs, some of which are of independent interest. We begin with a brief outline.

After first recalling some standard definitions in Subsection~\ref{subsec:prelim}, we prove, in Subsection~\ref{subsec:build-box}, a high-dimensional analogue of a result of Lev saying that the sumset of any sufficiently large collection of dense subsets of intervals, none of which is a subset of an arithmetic progression of common difference greater than one, must contain a long interval.

Subsection \ref{subsec:structural} then contains many of our main definitions and results. In particular, in Lemma \ref{lem:AP-from-doubling}, we show that for any $A\subseteq [0,n-1]$ with $0\in A$ and $h \ge n^{1/\beta}$, where $\beta \ge 1$ is a fixed constant, there is a GAP $P$ containing $A$ such that $P$ approximates $A$ with respect to taking $h$-fold sumsets, in that $hA$ is contained in $hP$ and contains a translate of $chP$ for some constant $c>0$ depending only on $\beta$. This approximation will play an important role in the proof of Theorem \ref{thm:hom-AP-build}. Informed by this result, we then introduce two key notions, the \textit{$h$-dimension} of $A$ and the \textit{$h$-bounding box} of $A$.

In Subsection \ref{subsec:non-proper}, we show that, given a not necessarily proper homogeneous GAP $A$ of large volume, either one can find a proper homogeneous GAP in $A$ with size at least a constant fraction of the volume of $A$ or a homogeneous GAP of smaller dimension with size at least a constant fraction of the size of $A$. Applied inductively, this then allows us to find large proper homogeneous GAPs inside non-proper homogeneous GAPs.

In our proof of Theorem \ref{thm:hom-AP-build},  
we will relate subset sums with multifold sumsets of certain large subsets of $A$. In order to control these multifold sumsets, we need that certain good properties hold not only for $A$, but also for all sufficiently large subsets of $A$. That is, the properties should be stable. Instead of defining the relevant properties directly, in terms of the multifold sumsets of $A$ and its subsets, which are hard to control, we define them indirectly through certain proxies for the structure of the multifold sumsets, namely, the notions of $h$-dimension and $h$-bounding box which were defined in Subsection \ref{subsec:structural}. These proxies are easier to handle and a simple iterative argument, described in Subsection \ref{subsec:processing}, shows that we can modify a set $A$ by removing a small number of elements so that the desired stability conditions, defined and studied in Subsection \ref{subsec:stability}, are satisfied.

Finally, in Subsection \ref{subsec:growing-sum}, we collect some simple results that will help control the growth in size of a set of subset sums as we add elements to the underlying set.

\subsection{Preliminaries}\label{subsec:prelim}

In this short subsection, we record a number of definitions which will be important throughout the paper. We first recall the definition of a generalized arithmetic progression and say what it means for such a progression to be proper.

\begin{defn}\label{def:proper-GAP}
A generalized arithmetic progression or \emph{GAP}, for short, is a set of the form $Q = \{x+\sum_{i=1}^{d}n_{i}q_{i}:0\le n_{i}\le w_{i}-1\}$,
where $d, x, q_1, \dots, q_d, w_1, \dots, w_d$ are integers with $d$ and all the $w_i$ positive. We refer to $d$ as the {\it dimension} of $Q$, $(w_1,\dots,w_d)$ as the {\it widths} of $Q$ and $(q_1,\dots,q_d)$ as the {\it differences} of $Q$. We also define the \textit{volume} of $Q$ by $\textrm{Vol}(Q)=\prod_{i=1}^{d}w_{i}$.
\end{defn}

\begin{defn}\
A GAP $\{x+\sum_{i=1}^{d}n_{i}q_{i}:0\le n_{i}\le w_{i}-1\}$ is said to be {\it $s$-proper} if, for all choices of $n_{i,j},n'_{i,j} \in [0,w_i-1]$ for $i \in [d]$ and $j \in [s]$, $\sum_{j=1}^{s}\sum_{i=1}^{d}n_{i,j}q_{i}=\sum_{j=1}^{s}\sum_{i=1}^{d}n'_{i,j}q_{i}$
if and only if $\sum_{j=1}^{s}n_{i,j}=\sum_{j=1}^{s}n'_{i,j}$ for
all $1\le i\le d$. In particular, we call a $1$-proper $d$-dimensional GAP a \emph{proper} $d$-dimensional GAP. 
\end{defn}

Recall that the $s$-fold sumset is defined by $sA = \{a_1+\dots+a_s : a_1,\dots,a_s\in A\}$. The following lemma is straightforward from the definition of properness. 

\begin{lem}
Let $Q$ be a $d$-dimensional GAP. If $sQ$ is a proper $d$-dimensional GAP, then $Q$ is an $s$-proper
$d$-dimensional GAP.
\end{lem}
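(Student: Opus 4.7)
The plan is to unpack both definitions and observe that they encode the same non-degeneracy condition. Write $Q = \{x + \sum_{i=1}^{d} n_i q_i : 0 \le n_i \le w_i - 1\}$. The first step is to give the $s$-fold sumset $sQ$ its natural GAP presentation: substituting $a_j = x + \sum_{i=1}^{d} n_{i,j} q_i$ with $n_{i,j} \in [0, w_i - 1]$ into $a_1 + \dots + a_s$, every element of $sQ$ takes the form $sx + \sum_{i=1}^{d} M_i q_i$ where $M_i = \sum_{j=1}^{s} n_{i,j}$ ranges over $[0, s(w_i - 1)]$. Conversely, every integer $M_i \in [0, s(w_i-1)]$ can be split as a sum of $s$ integers from $[0, w_i - 1]$, so $sQ$ is exactly presented as a $d$-dimensional GAP with base point $sx$, differences $q_1, \dots, q_d$, and widths $s(w_i - 1) + 1$.

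Next, I would use the hypothesis that $sQ$ is a proper $d$-dimensional GAP (in this natural presentation) to deduce that whenever $M_i, M_i' \in [0, s(w_i - 1)]$ satisfy $\sum_{i} M_i q_i = \sum_{i} M_i' q_i$, one must have $M_i = M_i'$ for every $i$. Applying this to any two families $(n_{i,j})$ and $(n'_{i,j})$ with $n_{i,j}, n'_{i,j} \in [0, w_i - 1]$ and $\sum_{j=1}^{s} \sum_{i=1}^{d} n_{i,j} q_i = \sum_{j=1}^{s} \sum_{i=1}^{d} n'_{i,j} q_i$, the coordinate sums $M_i := \sum_{j} n_{i,j}$ and $M_i' := \sum_{j} n'_{i,j}$ both lie in $[0, s(w_i - 1)]$ and satisfy $\sum_{i} M_i q_i = \sum_{i} M_i' q_i$. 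Properness of $sQ$ then forces $\sum_{j} n_{i,j} = \sum_{j} n'_{i,j}$ for every $1 \le i \le d$, which is exactly the $s$-properness condition for $Q$. The reverse implication in the definition of $s$-properness is automatic, since equal column sums trivially yield equal total sums.

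There is no real obstacle here; the only mild conceptual point is reading "$sQ$ is a proper $d$-dimensional GAP" as referring to the natural presentation inherited from $Q$, and once that is fixed the argument is a one-line translation between the two definitions.
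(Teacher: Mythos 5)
Your proof is correct, and it is exactly the expected unwinding of the definitions: present $sQ$ with base point $sx$, differences $q_i$, and widths $s(w_i-1)+1$, and observe that injectivity of $(M_1,\dots,M_d)\mapsto sx+\sum_i M_iq_i$ on the large box is precisely what $s$-properness of $Q$ asks for after setting $M_i=\sum_j n_{i,j}$. The paper offers no proof for this lemma (it is stated as immediate from the definitions), so there is no alternative approach to compare against; your reading of ``$sQ$ is proper'' as referring to the presentation inherited from $Q$ is the intended one.
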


Given a $d$-dimensional GAP $Q$ of the form $\{x+\sum_{i=1}^{d}n_{i}q_{i}:0\le n_{i}\le w_{i}-1\}$,
we can define a mapping $\phi_{Q}:Q\to\mathbb{Z}^{d}$ by choosing, for each element of $Q$, an arbitrary representation as $x+\sum_{i=1}^{d}n_iq_i$ with $n_i\in [0,w_i-1]$ for all $i\le d$ and setting $\phi_{Q}(x+\sum_{i=1}^{d}n_{i}q_{i})=(n_{1},\dots,n_{d})$. Going forward, we fix $\phi_Q$ for any given $Q$ and refer to it as the {\it identification map}.
Note that if $Q$ is proper, $\phi_Q$ gives a bijection between $Q$ and a box in $\mathbb{Z}^{d}$, while if
$Q$ is $s$-proper, $\phi_{Q}$ is a Freiman $s$-isomorphism (see, for example, \cite{TV} for the definition of a Freiman isomorphism). We will often write $\phi_Q^{-1}$ for the map $\phi_Q^{-1}:\mathbb{Z}^d \to \mathbb{Z}$ defined by $\phi_Q^{-1}(n_1,\dots,n_d)=x+\sum_{i=1}^{d}n_iq_i$, 
which is a one-sided inverse of $\phi_Q$. 

Observe that if $Q$ is a proper $d$-dimensional GAP with identification map $\phi_Q$, then $\phi_Q(Q)$ is a box in $\mathbb{Z}^d$. If this is the case, then, for any proper $d'$-dimensional GAP $P = \{a+\sum_{i=1}^{d'}n_ip_i\}$ which is a subset of $\phi_Q(Q)\subseteq \mathbb{Z}^d$, we have that $\phi_Q^{-1}(P)$ is a proper $d'$-dimensional GAP. Indeed, letting $p_i = (p_{i,1},\dots,p_{i,d})$, if 
$$\phi_{Q}^{-1}(a+\sum_{i=1}^{d'}x_ip_i)=\phi_Q^{-1}(a+\sum_{i=1}^{d'}y_ip_i),$$
then $$\sum_{j=1}^{d} q_j \sum_{i=1}^{d'}x_i p_{i,j} = \sum_{j=1}^{d} q_j \sum_{i=1}^{d'}y_i p_{i,j}.$$
Hence, by the properness of $Q$, 
$$ \sum_{i=1}^{d'}x_i p_{i,j} = \sum_{i=1}^{d'}y_i p_{i,j}$$
for all $j\le d$. Thus, $\sum_{i=1}^{d'}x_i p_i = \sum_{i=1}^{d'}y_ip_i$ and so it follows from the properness of $P$ that 
$x_i=y_i$ for all $i\in [d']$.

The definition of a homogeneous GAP below captures the idea that a GAP is homogeneous if, when appropriately extended, it passes through the origin.

\begin{defn}\label{def:vol-GAP}
A GAP $Q=\{x+\sum_{i=1}^{d}n_{i}q_{i}:0\le n_{i}\le w_{i}-1\}$ is \textit{homogeneous} if $\gcd(q_{1},\dots,q_{d})|x$. 
\end{defn}

In particular, note that a GAP $Q$ is homogeneous if and only if it can be written in the form $Q=\{\sum_{i=1}^{d}n_{i}q_{i}:a_i\le n_{i}\le b_i\}$. 
When $Q$ is homogeneous, we can use this observation to generalize the definition of multifold sumsets to non-integer values of $s$ as follows.

\begin{defn}
Let $Q$ be a homogeneous $d$-dimensional GAP given by $Q=\{\sum_{i=1}^{d}n_{i}q_{i}:a_{i}\le n_{i}\le b_{i}\}$ for some real numbers $a_1, \dots, a_d, b_1, \dots, b_d$ with $a_i < b_i$ for all $i = 1, 2, \dots, d$.
For a positive real number $c$, we then let $cQ=\{\sum_{i=1}^{d}n_{i}q_{i}:ca_{i}\le n_{i}\le cb_{i}\}$.
\end{defn}

Observe that for a homogeneous GAP $Q$, the definition of $cQ$ depends on the specific representation of $Q$ (that is, the choice of differences $q_1,\dots,q_d$ and intervals $[a_i,b_i]$). However, when $c$ is a positive integer, the $c$-fold sumset and this definition of $cQ$ agree. In particular, for a positive integer $c$, $cQ$ only depends on $Q$ as a set and not on the particular representation of $Q$.

We say that a GAP $Q$ is \emph{centered} if we can write $Q = \{\sum_{i=1}^{d}n_iq_i:a_i\le n_i\le b_i\}$ with $a_i \le 0 \le b_i$ for all $i\in [d]$. The following observation will be useful later.

\begin{claim}\label{claim:center}
If $Q$ is a GAP that contains $0$, then it is centered. 
\end{claim}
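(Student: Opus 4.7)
The plan is to start from the standard representation $Q = \{x + \sum_{i=1}^d n_i q_i : 0 \le n_i \le w_i - 1\}$ guaranteed by Definition~\ref{def:proper-GAP} and then exhibit a centered representation by an explicit reindexing. The only hypothesis I have available is that $0 \in Q$, so I will extract a witness from this membership and use it to translate the integer indices.

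First, since $0 \in Q$, there exist integers $n_1^*,\dots,n_d^*$ with $0 \le n_i^* \le w_i - 1$ satisfying $0 = x + \sum_{i=1}^d n_i^* q_i$, equivalently $x = -\sum_{i=1}^d n_i^* q_i$. Substituting back into the original representation gives
\[
Q = \Bigl\{ \sum_{i=1}^d (n_i - n_i^*) q_i : 0 \le n_i \le w_i - 1 \Bigr\}.
\]
Next, I would perform the change of variable $m_i = n_i - n_i^*$, which takes the index range $0 \le n_i \le w_i - 1$ bijectively to $-n_i^* \le m_i \le w_i - 1 - n_i^*$. This yields
\[
Q = \Bigl\{ \sum_{i=1}^d m_i q_i : a_i \le m_i \le b_i \Bigr\},
\]
with $a_i := -n_i^*$ and $b_i := w_i - 1 - n_i^*$.

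Finally, I would verify that $a_i \le 0 \le b_i$ for every $i$, which is immediate from the bound $0 \le n_i^* \le w_i - 1$: indeed $a_i = -n_i^* \le 0$ and $b_i = w_i - 1 - n_i^* \ge 0$. This matches the definition of a centered GAP and completes the argument.

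There is no real obstacle here; the proof is essentially a one-line reindexing, and the only care needed is to observe that the witnessing tuple $(n_1^*,\dots,n_d^*)$ lies in the prescribed box, which in turn forces the shifted ranges to straddle zero. The claim should be recorded precisely because centeredness is the convenient normal form for later applications of homogeneity (for instance, defining $cQ$ unambiguously for GAPs containing $0$).
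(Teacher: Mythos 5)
Your proof is correct and follows essentially the same route as the paper's: extract a witness tuple for $0\in Q$ from the defining representation and shift the index ranges by that tuple, observing that the shifted intervals must straddle zero. The only cosmetic difference is that you start from the $[0,w_i-1]$ normalization while the paper allows a general $[a_i',b_i']$ range, but this changes nothing in the argument.
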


\begin{proof}
Let $Q = \{x+\sum_{i=1}^{d}n_iq_i:a'_i\le n_i\le b'_i\}$ be such that $0\in Q$. Then we can write $0 = x+\sum_{i=1}^{d}m_iq_i$, where $m_i \in [a'_i,b'_i]$. Hence, we can also write $Q = \{\sum_{i=1}^{d} n_iq_i:a'_i-m_i\le n_i\le b'_i-m_i\}$, where $a'_i-m_i \le 0 \le b'_i-m_i$ as $m_i \in [a'_i,b'_i]$. 
\end{proof}

We now record some further elementary results about GAPs for future use.

\begin{lem}\label{lem:GAP-size}
The following estimates hold:
\begin{enumerate}
    \item Let $s$ be a positive integer and $P$ a GAP of dimension $d$ where $sP$ is proper. Then $(s/2)^d|P|\le |sP| \le s^d|P|$. 
    \item Let $c>0$ and let $P$ be a homogeneous GAP of dimension $d$ whose minimum width is at least $2+2c^{-1}$. Then, if $cP$ is proper, $(c/2)^d|P|\le |cP|\le c^d|P|$.
\end{enumerate}
\end{lem}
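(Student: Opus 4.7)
The plan is to carry out both bounds by direct width counting, using the properness hypothesis to convert cardinalities into products of widths.

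For \textbf{part 1}, I would first invoke the preceding lemma: since $sP$ is proper, $P$ is $s$-proper and, in particular, proper, so $|P|=\prod_{i=1}^{d}w_{i}$. Next, I would expand $sP=\{sx+\sum_{i}m_{i}q_{i}:0\le m_{i}\le s(w_{i}-1)\}$ and use the assumed properness of $sP$ to read off $|sP|=\prod_{i}(s(w_{i}-1)+1)$. Both bounds then reduce to comparing each factor with $sw_{i}$: the upper bound $s(w_{i}-1)+1\le sw_{i}$ is immediate from $s\ge 1$, and the lower bound $s(w_{i}-1)+1\ge sw_{i}/2$ holds provided $w_{i}\ge 2$. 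The one bookkeeping subtlety is a width-$1$ coordinate, which contributes the factor $1$ on both sides and so can be absorbed by reducing to a lower-dimensional representation.

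For \textbf{part 2}, I would apply the same strategy. Write $P$ in its homogeneous form $P=\{\sum_{i}n_{i}q_{i}:a_{i}\le n_{i}\le b_{i}\}$, so the widths $w_{i}=b_{i}-a_{i}+1$ satisfy $w_{i}\ge 2+2c^{-1}$. By definition, $cP$ is parametrised by integers in $[ca_{i},cb_{i}]$, whose count $w'_{i}=\lfloor cb_{i}\rfloor-\lceil ca_{i}\rceil+1$ lies in the interval $[c(w_{i}-1)-1,\,c(w_{i}-1)+1]$. Properness of $cP$ (and hence of $P$) yields $|cP|=\prod_{i}w'_{i}$ and $|P|=\prod_{i}w_{i}$. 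The key observation is that the width hypothesis $w_{i}\ge 2+2c^{-1}$ is exactly the threshold making $c(w_{i}-1)-1\ge cw_{i}/2$, since this is equivalent to $cw_{i}/2\ge c+1$. This gives the lower bound after multiplication, while the matching estimate $c(w_{i}-1)+1\le cw_{i}$ yields the upper bound.

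There is no real obstacle in either part: once the widths of $sP$ and $cP$ are written out, each bound is a one-line arithmetic inequality. I would present the two cases in parallel, emphasising that properness is used in the same way both times to convert a width count into a cardinality, and that the threshold $2+2c^{-1}$ on widths in part 2 is the natural analogue of the trivial $w_{i}\ge 2$ condition in part 1.
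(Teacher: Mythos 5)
Your proof matches the paper's approach: both reduce each bound to a per-coordinate width comparison, using properness to identify cardinalities with products of widths. One caution on part 2: your upper-bound step $c(w_i-1)+1\le cw_i$ requires $c\ge 1$ and reverses for $c<1$, and in fact the stated inequality $|cP|\le c^d|P|$ can fail for $c<1$ (take $P=\{0,\dots,6\}$ with $c=1/2$: then $|cP|=4>3.5=c|P|$). The paper's own proof does not justify that direction, and the lemma is only ever invoked in the paper for the lower bound, so this has no downstream consequences; but as written your argument for the upper bound in part 2 is restricted to $c\ge 1$. Your bookkeeping remark on width-$1$ coordinates in part 1 is a reasonable caution, though note that merely passing to a lower-dimensional representation does not recover the full exponent $d$ in $(s/2)^d$ when $s>2$; the lemma is implicitly applied only when every width is at least $2$.
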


\begin{proof}
Let $P = \{x + \sum_{i=1}^{d}n_iq_i:n_i \in [a_i,b_i]\}$ and let $w_i = b_i-a_i+1$. For the first bound, we use that $sP$ is a GAP of dimension $d$ with widths $s(w_1-1)+1,\dots,s(w_d-1)+1$ and note that $sw_i/2\le s(w_i-1)+1\le sw_i$. For the second bound, where $x = 0$, note that $cP$ is a GAP of dimension $d$ with widths $\lfloor cb_i \rfloor -\lceil ca_i\rceil +1 \ge c(b_i-a_i)-1 \ge (c/2)w_i$. 
\end{proof}

In the next two lemmas, we assume that the GAPs $A$ and $B$ are given with fixed representations and $cA$ and $cB$ are defined with respect to these representations. 

\begin{lem}\label{lem:cGAP-sub}
Let $c > 0$ and let $A = \{\sum_{i=1}^{d}n_iq_i:n_i\in [a_i,b_i]\}$ be a homogeneous GAP whose minimum width is at least $1 + 4 c^{-1}$. 
Then $2\lceil c^{-1}\rceil (cA)$ contains a translate of $A$. 
\end{lem}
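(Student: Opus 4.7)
The plan is to use the integer $k$-fold sumset of $cA$ for $k = 2\lceil c^{-1}\rceil$ and exhibit a translate of $A$ inside it at the level of coefficient vectors. First I would unpack the definition: with $A = \{\sum_{i=1}^{d} n_i q_i : a_i \le n_i \le b_i\}$, we have $cA = \{\sum_{i=1}^{d} n_i q_i : \lceil c a_i\rceil \le n_i \le \lfloor c b_i\rfloor\}$. Since coefficients in different coordinates may be chosen independently, for any positive integer $k$,
\[
k(cA) \supseteq \Big\{\sum_{i=1}^{d} m_i q_i : m_i \in \mathbb{Z} \cap [k\lceil c a_i\rceil,\, k\lfloor c b_i\rfloor]\Big\},
\]
because any integer $m_i$ in $[k\lceil c a_i\rceil, k\lfloor c b_i\rfloor]$ decomposes as a sum of $k$ integers in $[\lceil c a_i\rceil, \lfloor c b_i\rfloor]$ (start all $k$ terms at $\lceil c a_i\rceil$ and increment one by one).

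Second, I would note that a translate of $A$ of the form $A + \sum_{i=1}^{d} t_i q_i$, with integers $t_1,\dots,t_d$, coincides with $\{\sum_i m_i q_i : m_i \in [a_i + t_i, b_i + t_i]\}$. Hence it suffices to find integers $t_i$ with $[a_i + t_i, b_i + t_i] \subseteq [k\lceil c a_i\rceil, k\lfloor c b_i\rfloor]$; since $a_i, b_i$ are integers, this reduces to verifying the length bound
\[
k(\lfloor c b_i\rfloor - \lceil c a_i\rceil) \ge b_i - a_i = w_i - 1 \quad \text{for each } i.
\]

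The core computation comes next. Using $\lfloor c b_i\rfloor \ge c b_i - 1$ and $\lceil c a_i\rceil \le c a_i + 1$, we get $\lfloor c b_i\rfloor - \lceil c a_i\rceil \ge c(w_i - 1) - 2$. The width hypothesis $w_i \ge 1 + 4 c^{-1}$ gives $c(w_i - 1) \ge 4$, so $c(w_i - 1) - 2 \ge \tfrac{1}{2} c(w_i - 1)$. Multiplying by $k = 2\lceil c^{-1}\rceil$ and using $kc \ge 2$ (since $\lceil c^{-1}\rceil \ge c^{-1}$), we obtain $k(\lfloor c b_i\rfloor - \lceil c a_i\rceil) \ge w_i - 1$, as required. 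With such $t_i$ chosen coordinate by coordinate, the translate $A + \sum_i t_i q_i$ sits inside $k(cA) = 2\lceil c^{-1}\rceil(cA)$.

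There is no real obstacle beyond careful accounting. The threshold $w_i \ge 1 + 4c^{-1}$ is calibrated precisely so that the $2$-unit rounding loss per coordinate in passing from $c(b_i - a_i)$ to $\lfloor c b_i\rfloor - \lceil c a_i\rceil$ is absorbed by halving, while the factor $2$ in $k = 2\lceil c^{-1}\rceil$ restores the original width $w_i - 1$; homogeneity of $A$ is what lets us treat $cA$ as a genuine scaling in the coefficient lattice.
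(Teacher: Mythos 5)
Your proposal is correct and follows essentially the same approach as the paper: unpack $cA$ as a GAP with integer coefficient range $[\lceil ca_i\rceil, \lfloor cb_i\rfloor]$ in each coordinate, observe that the $k$-fold sumset scales these ranges coordinatewise, and verify the length inequality $2\lceil c^{-1}\rceil(\lfloor cb_i\rfloor - \lceil ca_i\rceil) \ge \lfloor b_i\rfloor - \lceil a_i\rceil$ via the same two-step estimate (rounding loss of $2$ absorbed by $c(w_i-1) \ge 4$, then rescaled by $k$). The only cosmetic point is that your phrase ``since $a_i, b_i$ are integers'' is stronger than needed, as the paper's convention allows real endpoints, but your chain of inequalities goes through unchanged with $w_i - 1 = \lfloor b_i\rfloor - \lceil a_i\rceil$ in place of $b_i - a_i$.
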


\begin{proof}
Since $cA = \{\sum_{i=1}^{d}n_iq_i:n_i\in [ca_i,cb_i]\}$, we have that, for any positive integer $s$, $s (cA) =  \{\sum_{i=1}^{d}n_iq_i:n_i\in [s\lceil ca_i\rceil,s\lfloor cb_i\rfloor]\}$. Thus, to show that $2\lceil c^{-1}\rceil (cA)$ contains a translate of $A$, we only need to check that $2\lceil c^{-1}\rceil (\lfloor cb_i\rfloor - \lceil ca_i\rceil) \ge \lfloor b_i\rfloor - \lceil a_i\rceil$. However, this is true, since 
\[
2\lceil c^{-1}\rceil (\lfloor cb_i\rfloor - \lceil ca_i\rceil) \ge 2\lceil c^{-1}\rceil (c(b_i-a_i)-2) \ge 2c^{-1} \cdot \frac{1}{2}c(b_i-a_i) \ge \lfloor b_i\rfloor - \lceil a_i\rceil. \qedhere
\]
\end{proof}

\begin{lem}\label{lem:cGAP}
Let $0<c\le 1$ and let $A,B$ be homogeneous GAPs such that $cA$ is contained in a translate of $cB$ with the minimum width of $A$ 
at least $1+4c^{-1}$. Then $4B$ contains a translate of $A$. 
\end{lem}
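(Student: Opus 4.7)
The strategy is to chain three containments that carry $A$ into a translate of $4B$, using the preceding lemma as a bootstrap. Set $s=2\lceil c^{-1}\rceil$, which is a positive integer; then $s\,(cA)$ agrees both as the $s$-fold sumset of $cA\subseteq\mathbb{Z}$ and as the homogeneous-GAP dilation of the representation of $cA$ inherited from $A$. Since the minimum width of $A$ is at least $1+4c^{-1}$, Lemma~\ref{lem:cGAP-sub} gives a translate of $A$ inside $s\,(cA)$.

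Next, I push this containment through the hypothesis $cA\subseteq cB+t$. Taking the $s$-fold sumset of both sides preserves inclusion up to translation (if $X\subseteq Y+t$ in $\mathbb{Z}$, then $sX\subseteq sY+st$), so $s\,(cA)$ lies in a translate of $s\,(cB)$. Writing $B=\{\sum_{i=1}^{d}n_i q_i:a_i\le n_i\le b_i\}$ with integer $a_i,b_i$, the set $s\,(cB)$ is a homogeneous GAP with the same differences $q_i$ as $B$, and its $i$-th width is at most
$$s(\lfloor cb_i\rfloor-\lceil ca_i\rceil)+1\;\le\; sc(b_i-a_i)+1.$$
Using $\lceil c^{-1}\rceil\le c^{-1}+1$, we obtain $sc\le 2+2c\le 4$, the last step because $c\le 1$. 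Hence each width of $s\,(cB)$ is at most $4(b_i-a_i)+1$, the corresponding width of $4B$. Two homogeneous GAPs sharing the same differences with coordinate-wise comparable widths have the smaller contained in a translate of the larger, so $s\,(cB)\subseteq 4B+t'$ for some $t'$.

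Chaining the three inclusions yields a translate of $A$ sitting inside $4B$. There is no real conceptual obstacle; the main care required is arithmetic bookkeeping, namely keeping straight the two meanings of $cQ$ (the homogeneous-GAP dilation versus the $s$-fold sumset when $s$ is a positive integer) and verifying that the inequality $sc\le 4$ depends precisely on the hypothesis $c\le 1$. This width bound is also the reason that the constant $4$ appears in the statement: any weakening of $c\le 1$ would force a corresponding enlargement of the target dilate of $B$.
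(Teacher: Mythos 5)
Your proof is correct and matches the paper's argument step for step: both set $s=2\lceil c^{-1}\rceil$, invoke Lemma~\ref{lem:cGAP-sub} to place a translate of $A$ inside $s(cA)$, push this through the hypothesized inclusion $cA\subseteq cB+t$ to land in a translate of $s(cB)$, and then use $sc\le 2+2c\le 4$ (via $c\le 1$) to fit $s(cB)$ inside a translate of $4B$. Your explicit width bookkeeping for $s(cB)$ versus $4B$ is just a slightly more spelled-out version of the paper's observation that $s(cB)\subseteq (sc)B$.
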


\begin{proof}
By Lemma \ref{lem:cGAP-sub}, $2\lceil c^{-1}\rceil (cA)$ contains a translate of $A$. Thus, letting $cA+x$ be a translate of $cA$ contained in $B$, we have $2\lceil c^{-1}\rceil (cB)$ contains $2\lceil c^{-1}\rceil (cA+x)$ which contains a translate of $2\lceil c^{-1}\rceil (cA)$ and hence $A$. Furthermore, for any positive integer $s$, $s(cB)$ is contained in $(sc)B$ by definition. Hence, $A$ is contained in a translate of $(2\lceil c^{-1}\rceil c)B \subseteq 4B$. 
\end{proof}

\subsection{Building boxes from dense subsets}\label{subsec:build-box}

In this subsection, we generalize the following result of Lev~\cite{Lev} to higher dimensions.  

\begin{lem}[Lev~\cite{Lev}] \label{lem:Lev}
Suppose $\ell,q\ge1$ and $n\ge3$ are integers with
$\ell\ge2\lceil(q-1)/(n-2)\rceil$. If $S_{1},\dots,S_{\ell}$ are integer
sets each having at least $n$ elements, each a subset of an interval
of at most $q+1$ integers and none a subset of an arithmetic
progression of common difference greater than one, then $S_{1}+\cdots+S_{\ell}$
contains an interval of length at least $\ell(n-1)+1$. 
\end{lem}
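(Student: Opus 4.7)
The plan is to prove the lemma by a two-phase ``seed-and-extend'' argument. First I would normalize by replacing each $S_i$ with $S_i - \min S_i$ (compensating by a shift of the final sumset), so that $S_i \subseteq [0,q]$ and $0 \in S_i$. The hypothesis that $S_i$ lies in no arithmetic progression of common difference greater than one then becomes $\gcd(S_i) = 1$, so each $S_i$ has ``full'' arithmetic resolution.

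The workhorse is an elementary \emph{interval extension step}. If a set $T$ contains an interval $[a,b]$ of length $L := b - a + 1 \ge q$ and $S \subseteq [0,q]$ satisfies $|S| \ge n$, then the translates $[a+s, b+s]$ for $s \in S$ overlap pairwise, since any two elements of $S$ differ by at most $q \le L$. Their union is therefore the single interval $[a + \min S,\, b + \max S]$, whose length is $L + (\max S - \min S) \ge L + (n-1)$, using that $n$ distinct integers span at least $n-1$. Thus once the partial sumset $T_k := S_1 + \cdots + S_k$ contains an interval of length at least $q$, every additional summand grows the longest interval by at least $n-1$.

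The heart of the argument is then to produce such a seed interval using only $\ell_0 \approx 2(q-1)/(n-2)$ of the summands, so that the remaining $\ell - \ell_0$ summands extend it, giving a final interval of length at least $q + (\ell - \ell_0)(n-1)$. A direct check shows that the hypothesis $\ell \ge 2\lceil(q-1)/(n-2)\rceil$ is exactly what is needed for this quantity to reach $\ell(n-1)+1$. To build the seed, I would track the length $L_k$ of the longest interval in $T_k$ and argue that, because each $S_i$ has $\gcd = 1$ and $|S_i| \ge n$ in $[0,q]$, each new summand either already grows the longest interval or meaningfully shrinks the largest gap in $T_k$, via a Freiman-type lower bound on $|T_k + S_{k+1}|$ that kicks in whenever $T_k$ is far from being an AP. Iterating for $\ell_0$ steps eventually forces $L_{\ell_0} \ge q$.

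The principal obstacle will be the sharpness of the seed-building step: the constant $2$ in $2\lceil(q-1)/(n-2)\rceil$ is essentially tight, so any loss there would weaken the final interval length below $\ell(n-1)+1$. I expect the right handling is an amortized ``gap-budget'' analysis, showing that the total gap mass in $[0, kq] \setminus T_k$ decreases by roughly $(n-2)/2$ per added summand, so that after $\ell_0$ steps the budget of $q-1$ is exhausted and a long interval appears. The bookkeeping here, rather than any individual estimate, is where the real work lies.
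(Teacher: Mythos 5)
The paper does not actually prove this lemma; it is cited directly from Lev~\cite{Lev}, so there is no internal argument to compare against, only the correctness of your proposal to assess. The normalization and the interval-extension step are fine: once $T_k=S_1+\cdots+S_k$ contains an interval of length $L\ge q$, adding $S\subseteq[0,q]$ with $\min S=0$ and $|S|\ge n$ does extend it to one of length $L+\max S\ge L+(n-1)$. The problem is the bookkeeping you use to close the argument. You claim that building a seed interval of length $q$ in $\ell_0\approx 2(q-1)/(n-2)$ steps and then extending at rate $n-1$ gives $q+(\ell-\ell_0)(n-1)\ge \ell(n-1)+1$, and that this is ``exactly'' what the hypothesis on $\ell$ guarantees. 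But that inequality is equivalent to $q-1\ge\ell_0(n-1)$, and with $\ell_0\approx 2(q-1)/(n-2)$ this would require $1\ge 2(n-1)/(n-2)$, which is false for every $n\ge 3$; since $2(n-1)/(n-2)>2$, the naive seed-and-extend account undershoots the target length by roughly a factor of two. For the bookkeeping to close, the seed would have to appear within $(q-1)/(n-1)$ steps, which is impossible in general: for $S_i=\{0,1,q\}$ (so $n=3$), the longest interval in $T_k$ has length only $k+1$ for all $k<q-1$, so reaching length $q$ takes about $q-1$ steps, not $(q-1)/2$.

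What actually happens in such examples is that, once the seed is long enough, the extension rate jumps to the diameter $\max S_i-\min S_i$, which can be as large as $q$, so the interval length catches up during the extension phase; your analysis, which caps the post-seed growth at exactly $n-1$ and fixes the seed threshold at $q$, cannot see this compensation. The ``amortized gap-budget'' paragraph gestures in roughly the right direction, but it is aimed at sharpening the seed count, which is not where the real shortfall is: a length-$q$ seed is simply not enough to make the two phases compatible. As written, the proposal does not establish the lemma; you would need a sharper inductive invariant (tracking something more than the current longest-interval length, or producing a seed of length comparable to $\ell_0(n-1)$) before the final arithmetic can close.
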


The following lemma is a simple consequence of Lev's result. 
 
\begin{lem} \label{lem:build-interval}
Let $0<c<1$ and let $n,\ell,v$ be positive integers such that $n \geq 4/c$ and $\ell \ge 10/c$. Let
$A_{1},\dots,A_{\ell}$ be subsets of $[n]$ such that each $A_{i}$ satisfies $|A_{i}|\ge cn$ and is a subset of a translate of $v\mathbb{Z}$ but 
not a subset of any translate of a proper subgroup of $v\mathbb{Z}$.
Then $A_{1}+\dots+A_{\ell}$ contains a translate of $v\cdot [c\ell n/2]$.
\end{lem}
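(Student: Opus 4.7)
The plan is to reduce to Lev's Lemma \ref{lem:Lev} by dividing out the common multiple $v$. Since each $A_i$ is contained in a translate of $v\mathbb{Z}$, we may write $A_i = t_i + v B_i$ for some $t_i \in \mathbb{Z}$ and $B_i \subseteq \mathbb{Z}$ with $|B_i| = |A_i| \geq cn$. The hypothesis that $A_i$ lies in $[n]$ forces $B_i$ to sit inside an interval of at most $\lceil n/v\rceil$ integers (so, in Lev's notation, we may take $q + 1 = \lceil n/v \rceil$). Moreover, the proper subgroups of $v\mathbb{Z}$ are exactly the $dv\mathbb{Z}$ with $d \geq 2$, so the assumption that no $A_i$ lies in a translate of a proper subgroup of $v\mathbb{Z}$ is exactly the assumption that no $B_i$ lies in a translate of $d\mathbb{Z}$ for $d \geq 2$, i.e., no $B_i$ is contained in an arithmetic progression of common difference greater than one.

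Next I would check the quantitative hypothesis of Lev's lemma. First, since each $A_i$ has at least $cn$ elements in $[n]$ lying in a common residue class modulo $v$, one has $v \leq 1/c$. Combined with $n \geq 4/c$, this gives $cn \geq 4$ and $cn - 2 \geq cn/2$, so
\[
\frac{q-1}{(cn) - 2} \;\leq\; \frac{n/v - 2}{cn/2} \;\leq\; \frac{2}{cv} \;\leq\; \frac{2}{c}.
\]
Hence $2\lceil(q-1)/(cn-2)\rceil \leq 2(\lfloor 2/c\rfloor + 1) \leq 4/c + 2 \leq 10/c \leq \ell$, so Lev's hypothesis is satisfied (with his $n$ set to $cn \geq 3$). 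Applying Lemma \ref{lem:Lev} to $B_1, \dots, B_\ell$ produces an interval of length at least $\ell(cn - 1) + 1$ inside $B_1 + \cdots + B_\ell$. Since $cn \geq 4$, we have $\ell(cn-1)+1 \geq c\ell n/2$, so $B_1+\cdots+B_\ell$ contains a translate of the integer interval $[c\ell n/2]$.

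Finally, we translate back. Writing $T = \sum_{i=1}^\ell t_i$, we have $A_1 + \cdots + A_\ell = T + v(B_1 + \cdots + B_\ell)$, so $A_1 + \cdots + A_\ell$ contains a translate of $v \cdot [c\ell n/2]$, which is exactly the desired conclusion. The only real content is the verification that the hypotheses of Lev's lemma hold after dividing by $v$; the bound $v \leq 1/c$ coming from the density of $A_i$ inside its residue class is what makes the ratio $(q-1)/(n'-2)$ controllable by $2/c$ uniformly in $v$, and this is the step I expect to require the most care with constants.
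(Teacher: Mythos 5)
Your proof is correct and takes essentially the same route as the paper's: translate each $A_i$ into the quotient to get sets $B_i$ of integers, apply Lev's Lemma~\ref{lem:Lev}, and verify the quantitative hypothesis using $n \geq 4/c$ and $\ell \geq 10/c$. One minor remark: the bound $v \leq 1/c$ you derive is never actually used (and your closing comment mischaracterizes its role) --- the step $\frac{2}{cv} \leq \frac{2}{c}$ needs only $v \geq 1$, which is precisely what the paper's proof invokes.
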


\begin{proof}
By assumption, there exist non-negative integers $a_i$ such that $S_i = \{(x-a_i)/v \,:\, x\in A_i\}$ is a subset of $\{0\} \cup [n/v]$ which is not contained in an arithmetic progression of common difference greater than one. 
By Lemma \ref{lem:Lev}, if $\ell \ge 2\lceil (n/v)/(cn-2)\rceil$, then $S_1+\dots+S_{\ell}$ contains an interval $I$ of length at least $\ell (cn-1)+1 > c\ell n/2$, where we used that $n \geq 4/c$. Since $v\ge 1$, the conditions $\ell \ge 10/c$ and $n \geq 4/c$ guarantee that $\ell \ge 2\lceil (n/v)/(cn-2)\rceil$. Thus, $A_{1}+\dots+A_{\ell}$ contains a translate of $v\cdot [c\ell n/2]$, as required.
\end{proof}

We will also need the following simple claim.

\begin{claim}\label{claim:abelian-expansion}
Let $G$ be a finite abelian group and let $A_1,\dots,A_{|G|}$ be subsets of $G$ such that no $A_i$ is contained in a translate of a proper subgroup of $G$. Then $A_1+\dots + A_{|G|}$ contains $G$.
\end{claim}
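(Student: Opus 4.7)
The plan is to prove this by iteration, using Kneser's theorem to show that each partial sumset $S_k := A_1 + \cdots + A_k$ either already equals $G$ or grows by at least one element when we add the next set. The main tool is the following consequence of Kneser's theorem: for nonempty $S, A \subseteq G$, if $H$ denotes the stabilizer of $S+A$, then $|S+A| \ge |S+H| + |A+H| - |H|$.

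First I would verify the key step: if $S, A$ are nonempty subsets of $G$, $A$ is not contained in a translate of any proper subgroup, and $S + A \ne G$, then $|S+A| \ge |S| + 1$. Let $H$ be the stabilizer of $S+A$. If $H = G$, then the $H$-invariance of $S+A$ forces it to be either empty or all of $G$, contradicting $S + A \ne G$ (as $S, A$ are nonempty). Hence $H$ is a proper subgroup. By hypothesis, $A$ is not contained in a translate of $H$, so, projecting to $G/H$ via the quotient map $\phi$, one has $|\phi(A)| \ge 2$. Since the stabilizer of $\phi(S+A) = \phi(S) + \phi(A)$ in $G/H$ is trivial, Kneser in $G/H$ reduces to the Cauchy--Davenport-type estimate $|\phi(S) + \phi(A)| \ge |\phi(S)| + |\phi(A)| - 1 \ge |\phi(S)| + 1$. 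Multiplying through by $|H|$ and using that $S+A$ is $H$-invariant (so $|S+A| = |H|\cdot|\phi(S+A)|$) while $|H|\cdot|\phi(S)| = |S+H| \ge |S|$, we obtain $|S+A| \ge |S| + |H| \ge |S| + 1$.

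With this in hand, the main argument is a simple induction. Assume $|G| \ge 2$ (the case $|G|=1$ is trivial). Since $\{0\}$ is a proper subgroup of $G$, each $A_i$ has at least two elements, so $|S_1| = |A_1| \ge 2$. Inductively, if $S_k \ne G$, then applying the claim above with $S = S_k$, $A = A_{k+1}$ shows that either $S_{k+1} = G$ (in which case $S_j = G$ for all $j \ge k+1$, since $G + A_{j} = G$) or $|S_{k+1}| \ge |S_k| + 1$. If the process never reaches $G$, then $|S_{|G|}| \ge 2 + (|G|-1) = |G|+1$, contradicting $S_{|G|} \subseteq G$. Hence $S_{|G|} = G$.

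The main obstacle is really just invoking Kneser's theorem correctly and tracking that the stabilizer $H$ of $S+A$ is proper precisely when $S+A \ne G$; once that is clear, the rest is a pigeonhole-style counting argument that at most $|G|-1$ strict size increases can occur before the sumset fills $G$.
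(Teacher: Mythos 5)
Your proof is correct, but it invokes Kneser's theorem where the paper gets by with a much more elementary observation. The paper first translates each $A_i$ so that $0 \in A_i$ (which, as you note, is harmless since the hypothesis is translation-invariant). With $0 \in A_i$, the partial sumset $S = A_1 + \cdots + A_{i-1}$ satisfies $S \subseteq S + A_i$, so if $|S + A_i| = |S|$ then $S + A_i = S$, which forces $A_i$ to lie in the stabilizer $P = \{g : S + g = S\}$ of $S$. If $\emptyset \ne S \ne G$, this stabilizer is a proper subgroup, contradicting the hypothesis on $A_i$ — no Kneser needed. From there the paper's counting is the same pigeonhole you use: sizes strictly increase until $G$ is filled, and $|G|$ sets suffice.

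Your Kneser-based key lemma ($S + A \ne G$ and $A$ not in a coset of a proper subgroup imply $|S + A| \ge |S| + 1$) is a perfectly valid and slightly more general statement — it does not require normalizing $0 \in A$ and works directly with arbitrary $S$ — so the approach is sound. The trade-off is that you are importing a nontrivial black box (Kneser) to prove something that has a two-line self-contained argument. For the purposes of this paper, which otherwise avoids Kneser entirely, the elementary stabilizer argument is the more appropriate tool, but your proof would compile to a correct statement.

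One small point worth flagging: your step ``Kneser in $G/H$ reduces to $|\phi(S) + \phi(A)| \ge |\phi(S)| + |\phi(A)| - 1$'' silently uses that the stabilizer of $\phi(S) + \phi(A)$ in $G/H$ is trivial. This is true (any subgroup $K/H$ stabilizing $\phi(S+A)$ lifts to $K$ stabilizing $S+A$, hence $K \subseteq H$), and you do assert it, but it deserves the one-line justification since it is the load-bearing reduction.
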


\begin{proof}
By translating each $A_i$, we may assume without loss of generality
 that $0\in A_i$ for all $i\le |G|$. We will show that if $A_i$ is not contained in a proper subgroup of $G$, then $|A_1+\dots +A_{i-1}+A_i| > |A_1+\dots +A_{i-1}|$ or $A_1+\dots +A_{i-1}=G$, from which the claim follows. Suppose, for the sake of contradiction, that for some $i\le |G|$ we have $|A_1+\dots +A_{i-1}+A_i| = |A_1+\dots +A_{i-1}|$. Let $S = A_1+\dots +A_{i-1}$. Then $|S+A_i|=|S|$. Since $0\in A_i$, we have that $S+A_i=S$. The set $P$ of elements $g\in G$ with $S+g=S$ is a subgroup of $G$ and if $S \ne \emptyset$ and $S \ne G$, then $P$ is proper. But $A_i \subseteq P$, contradicting our assumption that $A_i$ is not contained in a translate of a proper subgroup of $G$. 
\end{proof}

Before stating the main result of this subsection, we record some more definitions.

\begin{defn}
A box $Q$ in $\mathbb{Z}^d$ is a subset of $\mathbb{Z}^d$ of the form $Q = \{(x_1,\dots,x_d) \, : \,x_j \in I_j\}$, where the $I_j$ are non-empty intervals in $\mathbb{Z}$. We say that $(w_1,\dots,w_d)=(|I_1|,\dots,|I_d|)$ are the widths of $Q$.
\end{defn}

\begin{defn}
A subset $A$ of $\mathbb{Z}^d$ is said to be {\it reduced} if, for any proper subgroup $H$ of $\mathbb{Z}^d$ of the form $v_1\mathbb{Z} \times \dots \times v_d\mathbb{Z}$, $A \bmod H$ is not contained in a translate of a proper subgroup of $\mathbb{Z}^d/H$. Similarly, we say that a subset $A$ of a $d$-dimensional GAP $Q$ is {\it reduced} if, under the identification map $\phi_Q:Q\to \mathbb{Z}^d$, $A$ is mapped to a reduced subset of $\mathbb{Z}^{d}$.
\end{defn}

Our higher-dimensional generalization of Lev's result is now as follows.

\begin{lem} \label{lem:ell-box} 
For any $0<c \le 1$ and positive integer $d$, there exists a constant $\gamma > 0$ such that the following holds. Let $A_{1},\dots,A_{\ell}$ be reduced subsets of a box $Q$ in $\mathbb{Z}^{d}$ such that $|A_{i}|\ge c|Q|$ for $1 \leq i \leq d$. Then, assuming $\ell$ and the minimum width of $Q$ are sufficiently large in terms of $c$ and $d$, $A_{1}+\dots+A_{\ell}$ contains a translate of $\gamma \ell Q$. 
\end{lem}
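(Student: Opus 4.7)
My plan is to prove Lemma~\ref{lem:ell-box} by induction on the dimension $d$. The base case $d=1$ follows immediately from Lemma~\ref{lem:build-interval} with $v=1$: reducedness of $A_i \subseteq I_1 \subseteq \mathbb{Z}$ is precisely the statement that no $A_i$ is contained in a translate of any proper subgroup $v\mathbb{Z}$, so $A_1+\cdots+A_\ell$ contains a translate of $[c\ell w_1/2]$, which is a translate of $(c/2)\ell Q$.

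For the inductive step ($d \ge 2$), write $Q = I_1 \times Q'$ with $Q' = I_2 \times \cdots \times I_d$, and let $\pi:\mathbb{Z}^d \to \mathbb{Z}^{d-1}$ denote projection onto the last $d-1$ coordinates. The first observation is that projections preserve both density and reducedness: for each $i$, $\pi(A_i) \subseteq Q'$ satisfies $|\pi(A_i)| \ge |A_i|/|I_1| \ge c|Q'|$, and any product-subgroup obstruction to reducedness of $\pi(A_i)$ in $\mathbb{Z}^{d-1}$ would lift (by setting $v_1 = 1$) to an obstruction for $A_i$ in $\mathbb{Z}^d$, contradicting the reducedness hypothesis. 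The analogous statement holds for $\pi_1(A_i) \subseteq I_1$. Now partition $[\ell]$ into halves $\mathcal{B}, \mathcal{C}$ and set $B = \sum_{i \in \mathcal{B}} A_i$, $C = \sum_{i \in \mathcal{C}} A_i$. Applying the inductive hypothesis to the collection $\{\pi(A_i)\}_{i \in \mathcal{B}}$ inside $Q'$ yields that $\pi(B) = \sum_{\mathcal{B}} \pi(A_i)$ contains a translate $T'$ of $\gamma_{d-1}(\ell/2) Q'$. Applying Lemma~\ref{lem:build-interval} to the collection $\{\pi_1(A_i)\}_{i \in \mathcal{C}}$ inside $I_1$ yields an interval $J_1 \subseteq \pi_1(C)$ of length $\gtrsim c \ell w_1$.

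The remaining and most delicate step is to assemble a translate of $\gamma \ell Q$ inside $B+C$ from these two pieces of information. For each $z \in T'$ choose $(x_z, z) \in B$, and for each $u \in J_1$ choose $(u, y_u) \in C$; then $(x_z + u,\, z + y_u) \in B+C$. Here $x_z$ and $y_u$ may depend on the choices made, so a priori one only gets a graph-like subset rather than a product box. To decouple, I plan to spend a further constant fraction of the sets in $\mathcal{B}$ to apply the $(d-1)$-dimensional induction to the projection and simultaneously Lemma~\ref{lem:build-interval} along the first coordinate, producing for each $z$ in a slightly shrunken translate $T''\subseteq T'$ an entire interval of admissible first-coordinates $x_z$; by a pigeonhole/shifting argument one can then align these intervals on a common sub-interval of size $\gtrsim c\ell w_1$. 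Combining with $J_1$ and $y_u$ (which contributes a bounded shift of $T''$) produces a translate of $\gamma \ell Q$ inside $B+C$ with $\gamma$ depending only on $c$ and $d$, via a factor loss that compounds boundedly across the induction.

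The main obstacle is precisely the coupling just described: projections of a sumset are sums of projections, but knowing that both coordinate projections contain long intervals or boxes does \emph{not} imply that the sumset contains a product box. Overcoming this requires using a constant fraction of the sets to manufacture enough first-coordinate ``freedom'' uniformly over the $(d-1)$-dimensional base, and verifying that the reducedness and density hypotheses persist through the projections, sub-partitions, and averaging arguments that intervene. The hypothesis that $\ell$ and the minimum width of $Q$ are large in terms of $c,d$ is exactly what supplies the headroom to absorb these constant-factor losses at each level of the induction.
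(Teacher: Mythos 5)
There is a genuine gap at the assembly step, and you already half-identify it. In the inductive step you obtain a $(d-1)$-dimensional box $T'$ in the projection $\pi(B)$ and an interval $J_1$ in $\pi_1(C)$, but for a single fiber $z\in T'$ the admissible first coordinates $x_z$ with $(x_z,z)\in B$ are constrained only to lie in $[0,(\ell/2)w_1]$ --- a window of the \emph{same order} as the interval length $\approx c\ell w_1$ you could hope to produce at each fiber. Concretely, even if you succeeded in producing for each $z\in T''$ an entire interval $I_z$ of length $\approx c\ell w_1/4$ of admissible $x_z$'s, these intervals are free to slide around inside a window of length $\approx \ell w_1/4$; when $c$ is small there is no reason for them to share a common sub-interval, and the proposed ``pigeonhole/shifting argument'' is never carried out. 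Adding $J_1$ from $C$ on top shifts each $I_z$ by a common amount, so it does nothing to align them, and the accompanying $y_u$ then perturbs the second coordinate, which compounds the problem. In short: knowing that both coordinate projections of a sumset contain long intervals/boxes gives you no control over the joint distribution, and the reducedness hypothesis in $\mathbb{Z}^d$ is strictly stronger than reducedness of the two projections separately --- but your argument only ever uses the latter.

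The paper's proof sidesteps this entirely and is worth contrasting. Rather than projecting, it \emph{slices}: for each coordinate direction $k$ it chooses, for a block of $\alpha\ell$ indices, a fixed fiber height $y_i$ at which $A_i$ is dense in the $k$-th axis, and applies Lev's lemma (via Lemma~\ref{lem:build-interval}) to those one-dimensional slices. This produces a long dilated interval along the $k$-th axis sitting at a single, known height $z_k$, and the Minkowski sum of these $d$ axis-aligned dilated intervals is automatically a dilated box $\{(b_1v_1,\dots,b_dv_d): b_k\in[0,c'\alpha\ell w_k]\}$ --- no decoupling needed. The full-dimensional reducedness is then used exactly once, via Claim~\ref{claim:abelian-expansion}, to show that a bounded number of further summands covers all of $\mathbb{Z}_{v_1}\times\cdots\times\mathbb{Z}_{v_d}$, which fills in the dilated box to a genuine translate of a box. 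This also explains why the lemma needs reducedness as stated (as your proof would, were it completed): without it, a set like $\{(x,y):x\equiv y\bmod 2\}$ has dense, reduced projections but a sumset supported on a sublattice. If you want to salvage the inductive framework, you would need to incorporate the slicing idea (fix a fiber height before summing) rather than project; once you do that, you are essentially reconstructing the paper's argument.
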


\begin{proof}
For $x = (x_1, \dots, x_d) \in\mathbb{Z}^{d}$,
let $\pi_{1}(x)=x_{1}$ be the projection onto the first coordinate
of $x$ and $\pi_{1}'(x)=(x_{2},\dots,x_{d})$ be the projection onto
the remaining coordinates. For any $A \subseteq \mathbb{Z}^d$ and $y\in\mathbb{Z}^{d-1}$, 
let $\pi_{1}(A,y)=\{u:(u,y)\in A\}$.

Let the widths of $Q$ be $(w_{1},\dots,w_{d})$. Without loss of generality, by translation, we can assume that $Q$ contains $0$.
Since each set $A_{i}$ with $1 \leq i \leq d$ has density at least $c$ in $Q$, for at
least a $c/2$-fraction of the elements $y \in \pi_{1}'(Q)$, 
we have $|\pi_{1}(A_{i},y)|/w_{1}\ge c/2$.
We define $\tilde{A}_{i}=\{(u,y)\in A_{i}:|\pi_{1}(A_{i},y)|/w_{i}\ge c/2\}$.
Then $|\tilde{A}_{i}|\ge c^2|Q|/4$ 
and $|\pi_{1}'(\tilde{A}_{i})|\ge c|\pi_{1}'(Q)|/2$.
Furthermore, for each $y\in\mathbb{Z}^{d-1}$ such that $\pi_{1}(\tilde{A}_{i},y)$
is non-empty, we have $|\pi_{1}(\tilde{A}_{i},y)|\ge cw_{1}/2$. 

Let $\alpha = 1/(2d)$. For each $1 \leq i \leq \alpha \ell$, choose $y_i$ such that $B_i := \pi_{1}(\tilde{A}_{i},y_i)$
is non-empty, in which case it is a subset of $[w_1]$ of density at least $c/2$. Then, for $z_{1} = \sum_{i=1}^{\alpha \ell}y_i$, the sumset  $\sum_{i=1}^{\alpha\ell}\tilde{A}_{i}$ contains all elements of the form $(u,z_{1})$ with $u\in B_{1}+\dots+B_{\alpha\ell}$. 
Observe that if $B_{i}$ is contained in a translate of a subgroup
$v_i \mathbb{Z}$ of $\mathbb{Z}$, then $v_{i} \le 4/c$. 
Therefore, by the pigeonhole principle, we can find some $v \le 4/c$ and at least $c\alpha\ell/4$ of the sets $B_{i}$ that are
contained in a translate of $v\mathbb{Z}$ but not in a translate
of any proper subgroup of $v\mathbb{Z}$.
By Lemma~\ref{lem:build-interval}, the sum of these sets contains
a translate of $v\cdot [c^2 \alpha\ell w_1/16]$. Thus, $\sum_{i\leq\alpha\ell}A_{i}$ contains all elements of the form $(u,z_{1})$
where $u$ is in a translate of $v_{1}\cdot[c^2 \alpha \ell w_1 /16]$
for some $v_{1}\le 4/c$. 

By a similar argument, defining $\pi_{k}(x)=x_{k}$ and $\pi'_{k}(x)=(x_{1},\dots,x_{k-1},x_{k+1},\dots,x_{d})$,
we obtain that for some $z_{k}\in\mathbb{Z}^{d-1}$, $\sum_{\alpha(k-1)\ell < i \leq \alpha k\ell}A_{i}$
contains all elements $x$ such that $\pi'_{k}(x)=z_{k}$ and $\pi_{k}(x)$
is in a translate of $v_{k}\cdot[c^2 \alpha \ell w_k/16]$
for some $v_{k}\le 4/c$. 

Hence, $\sum_{1\le i\le d\alpha\ell}A_{i}$ contains a translate of the
dilated box $\{(b_{1}v_{1},\dots,b_{d}v_{d}):b_{k}\in[0,c'\alpha\ell w_{k}]\}$
for some $c'$ depending only on $c$. Consider the
subgroup $H=v_1\mathbb{Z}\times\dots\times v_d\mathbb{Z}$ of $\mathbb{Z}^d$ and let $G = \mathbb{Z}_{v_1}\times \dots \times \mathbb{Z}_{v_d}$. Since
each set $A_{i}$ is reduced, $A_i \bmod H$ is not contained in a translate of any proper subgroup of $G$. Thus, by Claim \ref{claim:abelian-expansion}, the set $A_{\alpha d\ell+1}+\dots+A_{\alpha d\ell+v_{1}v_{2}\cdots v_{d}}$ modulo $v_1\mathbb{Z}\times\dots\times v_d\mathbb{Z}$ is equal to $\mathbb{Z}_{v_{1}}\times\dots\times\mathbb{Z}_{v_{d}}$.

Assume now that $\ell$ is sufficiently large in terms of $c,c'$ and $d$. We claim that $\sum_{1\le i\le\alpha d\ell + v_{1}v_{2}\cdots v_{d}}A_{i}$ contains a translate of the box with widths
$(c''\alpha\ell w_{1},\dots,c''\alpha\ell w_{d})$
for $c''=c'/4$,
which immediately yields the desired conclusion of the lemma. To verify this claim, suppose that $\sum_{1\le i\le d\alpha\ell}A_{i}$ contains $t+\{(b_{1}v_{1},\dots,b_{d}v_{d}):b_{k}\in[0,c'\alpha\ell w_{k}]\}$. We next show that each element in a particular box is in the sumset. Consider an element of the form $x = t+(\lfloor c'\alpha \ell w_1 / 2\rfloor v_1 + z_1,\dots,\lfloor c'\alpha \ell w_d / 2\rfloor v_d + z_d)$ with $|z_k|\le c''\alpha\ell w_k$ for all $k \in [d]$. Since $A_{\alpha d\ell+1}+\dots+A_{\alpha d\ell+v_{1}v_{2}\cdots v_{d}}$ modulo $v_1\mathbb{Z}\times\dots\times v_d\mathbb{Z}$ is equal to $\mathbb{Z}_{v_{1}}\times\dots\times\mathbb{Z}_{v_{d}}$, we can find an element $r \in A_{\alpha d\ell+1}+\dots+A_{\alpha d\ell+v_{1}v_{2}\cdots v_{d}}$ such that $r-z \in v_1\mathbb{Z}\times\dots\times v_d\mathbb{Z}$ for $z=(z_1,\dots,z_d)$. Let $r - z = (a_1 v_1,\dots,a_d v_d)$. We have 
$|a_k| \le c''\alpha\ell w_k + v_{1}v_{2}\cdots v_{d} w_k \le (c''\alpha \ell + (4/c)^d)w_k$. Thus, $\lfloor c'\alpha \ell w_k / 2\rfloor-a_k \in [0, c'\alpha \ell w_k]$, using the assumption that $c''=c'/4$ and that $\ell$ is sufficiently large in $c,c'$ and $d$. Hence, $t + ((\lfloor c'\alpha \ell w_1 / 2\rfloor-a_1) v_1 , \dots , (\lfloor c'\alpha \ell w_d / 2\rfloor-a_d) v_d) \in \sum_{1\le i\le d\alpha\ell}A_{i}$. Therefore, 
$$x = r + t + ((\lfloor c'\alpha \ell w_1 / 2\rfloor-a_1) v_1 , \dots , (\lfloor c'\alpha \ell w_d / 2\rfloor-a_d) v_d) \in \sum_{1\le i\le\alpha d\ell+ v_{1}v_{2}\cdots v_{d}}A_{i}.$$
In particular, we have that $\sum_{1\le i\le\alpha d\ell+ v_{1}v_{2}\cdots v_{d}}A_{i}$ contains the box with widths $(c''\alpha\ell w_{1},\dots,c''\alpha\ell w_{d})$ centered at $t+(\lfloor c'\alpha \ell w_1 / 2\rfloor v_1,\dots,\lfloor c'\alpha \ell w_d / 2\rfloor v_d)$. 
\end{proof}

Our next lemma is a technical generalization of Lemma \ref{lem:ell-box} to the case where $A$ is not necessarily reduced. In the statement and proof, given a subset $A$ of a group $G$, we write $\langle A\rangle$ for the group generated by $A$, which is a subgroup of $G$. We also emphasize that a GAP here is a subset of $\mathbb{Z}^d$ rather than of $\mathbb{Z}$.

\begin{lem}\label{lem:non-reduced}
For $0<c \leq 1$ and a box $Q$ in $\mathbb{Z}^{d}$, let $A$ be a subset of $Q$ with $|A|\ge c|Q|$ and $0\in A$ and let $\Gamma=\langle A\rangle$. Then, assuming the minimum width of $Q$ is sufficiently large in terms of $c$ and $d$, there exists a positive constant $\kappa$ depending only on $c$ and $d$ and a GAP $P$ in $\mathbb{Z}^d$ with differences $p_1,\dots,p_d \in \Gamma$ forming a basis for $\Gamma$
such that $P$ is contained in a translate of $\kappa Q$ and $P$ contains $Q\cap \Gamma$. Furthermore, for $\ell$ sufficiently large in terms of $c$ and $d$, the multifold sumset $\ell A$ contains a translate of $\gamma \ell P$ for some constant $\gamma>0$ depending only on $c$ and $d$. 
\end{lem}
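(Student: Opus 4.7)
The plan is to invoke Lemma \ref{lem:ell-box} after transporting the problem to the basis coordinates of $\Gamma$. There are three tasks: verify that $\Gamma$ has rank $d$, construct an adapted basis of $\Gamma$ together with the bounding GAP $P$, and translate the density and reducedness hypotheses of Lemma \ref{lem:ell-box} across the basis isomorphism.

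For the rank, observe that any hyperplane of $\mathbb{R}^d$ meets the box $Q$ in at most $|Q|/w_{\min}$ integer points, since fixing all but one coordinate determines the remaining one. Thus, if $w_{\min}$ is large enough in terms of $c$, the density hypothesis $|A|\ge c|Q|$ forces $A$ to span $\mathbb{R}^d$, and together with $0\in A$ this yields $\Gamma=\langle A\rangle$ of rank $d$. A standard lattice-point count then also gives $[\mathbb{Z}^d:\Gamma]\le 2/c$, so $\Gamma$ has bounded index.

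The central step is the basis construction. Set $K:=Q\cup(-Q)$, a symmetric convex body with $|K|\le 2|Q|$ (using $0\in Q$), and apply Mahler's theorem on successive minima to obtain a basis $p_1,\dots,p_d$ of $\Gamma$ satisfying $\|p_i\|_K\le C_d\lambda_i$, where $\lambda_i$ is the $i$-th successive minimum of $\Gamma$ relative to $K$. A standard dual-basis estimate attached to such a basis yields the crucial coefficient bound: every $x\in\Gamma\cap K$ admits a representation $x=\sum n_ip_i$ with $|n_i|\le C_d'/\lambda_i$. The rank argument implies that $A\subseteq K$ already contains $d$ linearly independent vectors, so $\lambda_d\le 1$ and hence $\lambda_i\le 1$ for all $i$. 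Defining $P:=\{\sum n_ip_i:|n_i|\le W_i\}$ with widths $W_i:=\max(\lceil C_d'/\lambda_i\rceil, M)$ for a large constant $M=M(c,d)$ then simultaneously ensures $Q\cap\Gamma\subseteq K\cap\Gamma\subseteq P$, via the coefficient bound, and $P\subseteq\kappa K$, via the triangle inequality in $\|\cdot\|_K$, with $\kappa=\kappa(c,d)$. In particular $P$ lies in a translate of $\kappa Q$, and $P$ is automatically proper since $p_1,\dots,p_d$ is a basis of the rank-$d$ lattice $\Gamma$.

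For the multifold sumset, push everything through the isomorphism $\phi:\Gamma\to\mathbb{Z}^d$, $p_i\mapsto e_i$: $P$ becomes a box $\tilde P$ and $A$ becomes $\tilde A\subseteq\tilde P$. The lattice-point count $|P|\le|\kappa K\cap\Gamma|\le C_d\kappa^d|Q|/[\mathbb{Z}^d:\Gamma]$ combined with $|A|\ge c|Q|$ yields a density lower bound $|\tilde A|/|\tilde P|=|A|/|P|\ge c'$ for some $c'=c'(c,d)>0$. Moreover, $\tilde A$ is reduced in $\tilde P$: if $\tilde A\bmod H$ were contained in a translate of a proper subgroup of $\mathbb{Z}^d/H$, then because $0\in\tilde A$ the translate would coincide with the subgroup, contradicting the fact that $\tilde A$ generates $\mathbb{Z}^d$ (since $A$ generates $\Gamma$). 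Choosing $M$ large enough to exceed the minimum-width threshold of Lemma \ref{lem:ell-box} at density $c'$ and dimension $d$, and setting $A_1=\dots=A_\ell=\tilde A$, Lemma \ref{lem:ell-box} produces a translate of $\gamma\ell\tilde P$ inside $\ell\tilde A$; pulling back through $\phi^{-1}$ gives the required translate of $\gamma\ell P$ inside $\ell A$.

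The main technical obstacle is the basis construction. The coefficient estimate $|n_i|\le C_d'/\lambda_i$, which is what simultaneously delivers both containments $P\supseteq Q\cap\Gamma$ and $P\subseteq\kappa Q$, is the heart of the argument; and the choice of widths $W_i$, with the $M$ term inflating directions in which the $\lambda_i$ happen to be large, is what guarantees that the output box meets the minimum-width threshold required by Lemma \ref{lem:ell-box} in every coordinate.
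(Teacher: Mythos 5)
Your proposal is correct in outline but follows a genuinely different route from the paper, and contains two minor slips worth flagging.

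The paper does not invoke successive minima or Mahler's theorem. It reuses the \emph{proof} of Lemma~\ref{lem:ell-box} directly: for $\ell_0$ large, $\ell_0 A$ already contains a translate of a dilated box $\tilde{Q}=\{(b_1v_1,\dots,b_dv_d):b_k\in[0,w_k]\}$ with $v_1,\dots,v_d\ll_{c,d}1$. This immediately gives $v_1\mathbb{Z}\times\dots\times v_d\mathbb{Z}\subseteq\Gamma$, after which a basis of $\Gamma$ is produced by hand via coordinate-reduction modulo the $v_ie_i$, and the GAP $P$ is taken to be an explicit dilate of $\{\sum n_ip_i:|n_i|\le w_i\}$. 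Because the widths of $P$ are visibly proportional to the widths of $Q$, the minimum-width hypothesis of Lemma~\ref{lem:ell-box} is inherited without any further argument, and the multifold sumset containment then follows by appealing once more to the Lemma~\ref{lem:ell-box} proof structure (via Claim~\ref{claim:abelian-expansion}). Your approach instead transports the problem to geometry of numbers (Mahler plus the dual-basis coefficient estimate), constructs $P$ abstractly, and then applies Lemma~\ref{lem:ell-box} as a black box. This is cleaner modulo the classical input it invokes, but it requires you to re-derive from scratch that the resulting box is both wide enough and has the right density, facts which in the paper's construction are essentially free.

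Two points need repair. First, $K=Q\cup(-Q)$ is not convex (for a box $Q$ containing $0$, the union of $Q$ and $-Q$ is a non-convex cross shape), so Mahler's theorem cannot be applied to it; you should replace $K$ by $\mathrm{conv}(Q\cup(-Q))$ or, more simply, by the smallest symmetric box containing $Q$, which has volume at most $2^d|Q|$. Second, your choice $W_i=\max(\lceil C_d'/\lambda_i\rceil,M)$ with ``$M$ chosen large enough to exceed the minimum-width threshold of Lemma~\ref{lem:ell-box} at density $c'$'' is circular as stated: $\kappa$ and hence the density $c'$ degrade with $M$, so one cannot first fix $c'$ and then pick $M$ to match. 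Fortunately the inflation by $M$ is unnecessary: since $[\mathbb{Z}^d:\Gamma]\ll_c 1$ (as you observe), one has $[\mathbb{Z}^d:\Gamma]\mathbb{Z}^d\subseteq\Gamma$, and each $[\mathbb{Z}^d:\Gamma]e_i$ has $K$-norm $\ll_{c}1/w_{\min}$, so $\lambda_i\le\lambda_d\ll_{c,d}1/w_{\min}$ for every $i$, whence $\lceil C_d'/\lambda_i\rceil\gg_{c,d}w_{\min}$ already exceeds any fixed threshold once $w_{\min}$ is large in terms of $c$ and $d$. With $W_i=\lceil C_d'/\lambda_i\rceil$ alone, $\kappa$ is a genuine constant depending only on $c$ and $d$, the density $c'$ is bounded below uniformly, and the circularity disappears. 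With these two corrections your argument stands and yields the lemma by a route independent of the paper's.
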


\begin{proof}
Let $(w_1,\dots,w_d)$ be the widths of $Q$. From the proof of Lemma \ref{lem:ell-box}, for $\ell_0$ sufficiently large in terms of $c$ and $d$, $\ell_0 A$ contains a translate of a  dilated box $\tilde{Q} = \{(b_{1}v_{1},\dots,b_{d}v_{d}):b_{k}\in[0, w_{k}]\}$, where $v_1,\dots,v_d$ are bounded in terms of $c$ and $d$. 

Since $\ell_0 A$ contains a dilated box of dimension $d$, we have that $\langle A\rangle$ has dimension $d$ and, hence, the subgroup $\Gamma = \langle A\rangle$ of $\mathbb{Z}^{d}$ has a basis $(p_1,\dots,p_d)$. 
Note that $\Gamma$ contains $\ell_0 A$, which in turn contains a translate of $\tilde{Q}$. Hence, $\Gamma$ contains $\tilde{Q}$. 

We claim that we can choose a basis for $\Gamma$ such that the basis elements lie in $\prod_{i=1}^{d}[0,2v_i-1]$. Indeed, for any basis $(p_1,\dots,p_d)$, any $i,j\in [d]$ and any integer $r$, if we write $e_i$ for the standard basis vector which is $1$ in the $i$th coordinate and $0$ otherwise, then either $(p_1,\dots,p_{j-1},p_j-rv_ie_i,p_{j+1},\dots,p_d)$ or $(p_1,\dots,p_{j-1},p_j-(r+1)v_ie_i,p_{j+1},\dots,p_d)$ form a basis of $\Gamma$. By iterating this step, we can form a basis such that the $i$th coordinate of each vector in the basis is in $[0,2v_i-1]$. Thus, we can assume that the basis $(p_1,\dots,p_d)$ consists of elements in $\prod_{i=1}^{d}[0,2v_i-1]$. 

Let $P_0$ be the GAP $\{\sum_{i=1}^{d} n_i p_i : -w_i \le n_i \le w_i\}$. 
Then, for some constant $\xi$ depending only on $v_1,\dots,v_d$, $\xi P_0$ contains $Q \cap \Gamma$. Indeed, since $p_1,\dots,p_d$ form a basis of $\Gamma$, each element of $\prod_{i=1}^{d}[0,v_i-1] \cap \Gamma$ and each of $v_1e_1,\dots,v_de_d$ can be written as a linear combination of $p_1,\dots, p_d$. Let $\Xi$ be the maximum absolute value of a coefficient appearing in any of these linear combinations, noting that $\Xi$ is bounded in terms of $c$ and $d$. 
For each element $y=(y_1,\dots,y_d)$ of $\Gamma$, we can write $y=z+t$ where $z_i = v_i \lfloor y_i/v_i\rfloor$ and $t_i = y_i - v_i \lfloor y_i /v_i \rfloor$. Then $z$ is a linear combination of the $v_ie_i$ which is contained in $\Gamma$ and so $t$ is an element of $\prod_{i=1}^{d}[0,v_i-1] \cap \Gamma$. Furthermore, recalling that $0\in A\subseteq Q$, if $y\in Q\cap \Gamma$, then $z$ is a linear combination of the $v_ie_i$, each with coefficient at most $w_i$ in absolute value. Thus, any element of $Q\cap \Gamma$ can be written as a linear combination of $p_1,\dots,p_d$ with the absolute value of the $i$th coefficient bounded by $(w_i+1)\Xi \le 2w_i \Xi$. Thus, the claim holds with $\xi = 2\Xi$. 

Let $P=\xi P_0$. Since $A\subseteq Q\cap \Gamma$, we have $A\subseteq P$. Furthermore, since the $j$th coordinate of $p_i$ is bounded by $2v_j-1$, which is bounded in terms of $c$ and $d$, and since $\xi$ is bounded in terms of $c$ and $d$, $P$ is contained in a translate of $\kappa Q$ for some constant $\kappa$ depending only on $c$ and $d$. 

We have that $H=v_1 \mathbb{Z}\times \dots \times v_d\mathbb{Z}$ is a subgroup of $\Gamma$. Let $G=\Gamma / H$, so $|G|\le v_1\cdots v_d$. Note that $A$ is not contained in any proper subgroup of $G$ by the definition of $\Gamma$ and, since $0\in A$, $A$ is not contained in a translate of any proper subgroup of $G$. By Claim \ref{claim:abelian-expansion}, $v_1v_2\cdots v_d A \bmod v_1\mathbb{Z}\times\dots \times v_d \mathbb{Z}$ contains $\Gamma \bmod v_1\mathbb{Z}\times\dots \times v_d \mathbb{Z}$. Thus, as in the proof of Lemma~\ref{lem:ell-box}, $\ell_0A + v_1v_2\cdots v_d A$ contains a translate of $\gamma_0 \ell_0 P$ for some constant $\gamma_0$ depending only on $c$ and $d$. Hence, for $\ell=\ell_0+v_1v_2\cdots v_d$, we have that $\ell A$ contains a translate of $\gamma\ell P$, where $\gamma = \gamma_0/(1+v_1v_2\cdots v_d/\ell_0)$. 
\end{proof}

For future use, we record the following corollary of Lemma \ref{lem:non-reduced}. Here the {\it greatest common divisor} $\gcd(P)$ of a GAP $P$ is the greatest common divisor of the differences of $P$ and, for a general subset $A$ of $\mathbb{Z}$, $\gcd(A)$ is the greatest common divisor of the elements of $A-A$. Note that when $A$ is a GAP these two notions coincide. For $A\subseteq \mathbb{Z}^d$, its affine span $\overline{\langle A\rangle}$ is $a+\langle A-A\rangle$ for some $a\in A$, noting that this definition does not depend on the choice of $a$. Furthermore, if $0\in A$, then the affine span $\overline{\langle A\rangle}$ and the span $\langle A\rangle$ coincide. 

\begin{cor}\label{cor:non-reduced}
Let $0<c \le 1$. The following claims hold: 
\begin{enumerate}
    \item Let $Q$ be a box of dimension $d$ in $\mathbb{Z}^{d}$ and let $A$ be a subset of $Q$ with $|A|\ge c|Q|$. Assume that the minimum width of $Q$ is sufficiently large in terms of $c$ and $d$. Then there is a positive constant $\kappa$ depending only on $c$ and $d$ and a $d$-dimensional GAP $P$ of dimension $d$ in $\mathbb{Z}^d$ such that $P$ is contained in a translate of $\kappa Q$, $P$ contains $Q\cap \overline{\langle A\rangle}$ 
    and $\kappa A$ contains a translate of $P$. Furthermore, if $0\in A$, then one may assume that $P$ is centered. 
    \item Let $Q$ be a proper GAP of dimension $d$ and let $A$ be a subset of $Q$ with $|A|\ge c|Q|$. Assume that the minimum width of $Q$ is sufficiently large in terms of $c$ and $d$. Then there is a positive integer $\kappa$ depending only on $c$ and $d$ and a proper GAP $P$ of dimension $d$ such that $|P|\le \kappa |Q|$, $\kappa P$ contains a translate of $A$, $\kappa A$ contains a translate of $P$ and $\gcd(P)=\gcd(A)$. 
\end{enumerate}
\end{cor}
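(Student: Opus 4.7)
The plan is to derive both parts of the corollary from Lemma \ref{lem:non-reduced}, with Part 2 additionally using the identification map $\phi_Q$ to pass from the $\mathbb{Z}^d$ setting back to $\mathbb{Z}$.

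For Part 1, I would first translate $A$ (and $Q$) so that $0 \in A$, whereby the affine span $\overline{\langle A\rangle}$ equals the linear span $\Gamma := \langle A\rangle$, and then apply Lemma \ref{lem:non-reduced} directly. The lemma produces a GAP $P$ in $\mathbb{Z}^d$ with basis differences $p_1,\dots,p_d \in \Gamma$, contained in a translate of $\kappa_0 Q$, containing $Q \cap \Gamma$, and satisfying $\ell A \supseteq \gamma \ell P + t$ for all sufficiently large $\ell$. Taking $\ell = \lceil 1/\gamma\rceil$ and using that $P = \xi P_0$ is centered (since $P_0 = \{\sum_j n_j p_j : -w_j \le n_j \le w_j\}$ is symmetric about the origin) gives $\gamma \ell P \supseteq P$, and hence $\kappa A \supseteq P + t$ for $\kappa := \max(\kappa_0, \lceil 1/\gamma\rceil)$. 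Translating back handles the case $0 \notin A$; the centering assertion only applies when $0 \in A$ originally, so no translation is needed in that case.

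For Part 2, I would first translate so $0 \in A$, which makes the translated $Q$ centered (and hence homogeneous), and then identify $Q$ with the box $B := \phi_Q(Q) \subseteq \mathbb{Z}^d$ via the bijection $\phi_Q$. Applying Part 1 to $\phi_Q(A) \subseteq B$ yields a centered $d$-dimensional GAP $P' \subseteq \mathbb{Z}^d$ with basis differences $p_1,\dots,p_d$ of $\Gamma := \langle \phi_Q(A)\rangle$, having coordinates bounded in $c$ and $d$. I would then extend $\phi_Q^{-1}$ to a linear homomorphism $\mathbb{Z}^d \to \mathbb{Z}$, $(n_1,\dots,n_d) \mapsto \sum_i n_i q_i$ (possible because $Q$ is now centered), set $\tilde p_j := \phi_Q^{-1}(p_j) = \sum_i p_{j,i} q_i$, and define $P$ as the GAP with differences $\tilde p_j$ and widths $w_j'$ chosen so that $\sum_j (w_j' - 1) |p_{j,i}| < w_i$ for every $i \in [d]$. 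Properness of $P$ then follows because any nontrivial collision would yield $\sum_i c_i q_i = 0$ with $|c_i| < w_i$, contradicting $Q$'s properness, after which the linear independence of the $p_j$'s forces the collision to be trivial. The equality $\gcd(P) = \gcd(A)$ is immediate since the $\tilde p_j$'s generate $\phi_Q^{-1}(\Gamma) = \langle A\rangle = \gcd(A)\mathbb{Z}$. The containment $\kappa A \supseteq P + t$ transfers from Part 1's analogous claim in $\mathbb{Z}^d$ via $P \subseteq \phi_Q^{-1}(P')$, and $\kappa P \supseteq A + t$ follows from $A \subseteq \phi_Q^{-1}(P') \subseteq \kappa P$ for a suitable $\kappa$.

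The main obstacle will be choosing the widths $w_j'$ so that $P$ is both proper and has $|P| \gtrsim_{c,d} |Q|$, which is what forces the final $\kappa$ to depend only on $c$ and $d$. Here I would leverage the observation that $\Gamma$ has bounded index in $\mathbb{Z}^d$: since $|\Gamma \cap B| \ge |\phi_Q(A)| \ge c|B|$ and the minimum width of $Q$ is assumed large, we have $|\det(p_{j,i})| = [\mathbb{Z}^d : \Gamma] \le 1/c$. The linear programming problem of maximizing $\prod_j w_j'$ under the properness constraints $\sum_j w_j' |p_{j,i}| \le w_i$ then has optimum at least $\prod_i w_i / |\det(p_{j,i})| \gtrsim_d c |Q|$, yielding a width choice that secures both properness and the required containments with $\kappa$ bounded in $c$ and $d$.
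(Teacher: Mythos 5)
Your treatment of Part 1 is correct and matches the paper's, and you usefully spell out the step the paper leaves implicit (taking $\ell = \lceil 1/\gamma\rceil$ and using that $P = \xi P_0$ is centered to get $\gamma\ell P \supseteq P$).

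For Part 2 you take a genuinely different route: rather than scaling down the GAP $P'$ coming from Part 1, you attempt to build a proper GAP from scratch with the same differences $\tilde p_j = \phi_Q^{-1}(p_j)$ and freshly chosen widths $w_j'$, with properness enforced by the coordinatewise constraint $\sum_j (w_j'-1)|p_{j,i}| < w_i$. The properness deduction from this constraint (reducing a collision to $\sum_i c_i q_i = 0$ with $|c_i| \le w_i - 1$ and invoking properness of $Q$) is correct, as is the $\gcd$ argument. However, the key claim that the optimization problem $\max \prod_j w_j'$ subject to $\sum_j w_j'|p_{j,i}| \le w_i$ has value at least $\prod_i w_i/|\det(p_{j,i})|$ (and hence $\gtrsim_{c,d} |Q|$) is \emph{false}, and this is a genuine gap. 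Take $d = 2$, $p_1 = (1,1)$, $p_2 = (1,0)$ (a basis of $\mathbb{Z}^2$, so $|\det| = 1$, with all entries bounded), and widths $w_1 = L$, $w_2 = L^2$ with $L$ large. The constraints read $w_1' + w_2' \le L$ and $w_1' \le L^2$, so the maximum of $w_1'w_2'$ is $L^2/4$, whereas $\prod_i w_i/|\det| = L^3$; the ratio tends to $0$. So the bound on the product cannot be extracted from $|\det(p_{j,i})| \le 1/c$ plus bounded entries alone. The information you actually need is exactly the containment already given to you by Part 1, namely that $P'$ lies in a translate of $\kappa \phi_Q(Q)$: this says that the widths $W_j$ of $P'$ satisfy $\sum_j (W_j - 1)|p_{j,i}| \le \kappa(w_i - 1)$ for each $i$, and so $w_j' := W_j/\kappa$ (roughly) is a feasible choice with $\prod_j w_j' = |P'|/\kappa^d \gtrsim_{c,d} |Q|$. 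Once you make this substitution, your Part 2 argument becomes essentially equivalent to the paper's, which skips the LP reformulation altogether and simply sets $P := \phi_Q^{-1}(\kappa^{-1}P')$: since $\kappa^{-1}P'$ sits inside a translate of $\phi_Q(Q)$, its preimage under $\phi_Q^{-1}$ is automatically proper (the preimage of a proper GAP inside a proper GAP is proper, as noted in the preliminaries), and the size and containment claims follow from Lemma \ref{lem:cGAP-sub}. The paper's route is both shorter and sidesteps the determinant issue entirely, so I'd recommend replacing the LP step with a direct appeal to the containment from Part 1.
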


\begin{proof}
The first claim follows directly from Lemma \ref{lem:non-reduced} if $0\in A$. If $0\notin A$, consider the translate $A-a$ for some element $a\in A$, so $0\in A-a$. Then the claim holds for the box $Q-a$ and the subset $A-a$ of $Q-a$, so there is a GAP $P$ of dimension $d$ in $\mathbb{Z}^d$ such that $P$ is contained in a translate of $\kappa (Q-a)$, $P$ contains a translate of $(Q-a)\cap \langle A-a \rangle$ and $\kappa (A-a)$ contains a translate of $P$. Then $P+a$ is contained in a translate of $\kappa Q$, $P+a$ contains $Q\cap \overline{\langle A\rangle}$ and $\kappa A$ contains a translate of $P+a$. The GAP $P+a$ therefore has the required properties. 

For the second claim, observe that we can assume without loss of generality that $0\in A$ as the hypotheses and conclusions are invariant under translation. By the first claim, under the identification map $\phi_Q:Q\to \mathbb{Z}^d$, one can find a positive constant $\kappa$, which we can assume to be an integer, and a GAP $\tilde{P}$ in $\mathbb{Z}^d$ such that $\tilde{P}$ is contained in a translate of $\kappa \phi_Q(Q)$, $\tilde{P}$ contains $\phi_Q(A)$ and $\kappa \phi_Q(A)$ contains a translate of $\tilde{P}$. Note that for a box $Y$ in $\mathbb{Z}^d$ and $X \subset \mathbb{Z}^d$, if $\kappa X \subseteq \kappa Y$, then $X \subseteq Y$. 
Hence, $\kappa^{-1}\tilde{P}$ is contained in a translate of $\phi_Q(Q)$. Since $Q$ is proper and $\kappa^{-1}\tilde{P}$ is proper, 
$\phi_Q^{-1}(\kappa^{-1}\tilde{P})$ is also proper. Let $P :=\phi_Q^{-1}(\kappa^{-1}\tilde{P})$. Note that $\phi_Q^{-1}:\mathbb{Z}^{d}\to \mathbb{Z}$ is a linear map, so that, since $4\kappa$ is a positive integer, any element in $\phi_Q^{-1}(4\kappa \cdot \kappa^{-1}\tilde{P})$ is contained in $4\kappa \phi_Q^{-1}(\kappa^{-1}\tilde{P})$. Hence, by Lemma \ref{lem:cGAP-sub}, 
$4\kappa P \supseteq \phi_Q^{-1}(4\kappa \cdot \kappa^{-1}\tilde{P})$ contains a translate of $\phi_Q^{-1}(\tilde{P})\supseteq A$. 
Furthermore, $\kappa A$ contains a translate of $\phi_Q^{-1}(\tilde{P})$ and, hence, $P$. Finally, $|P|\le |\tilde{P}|\le |\kappa \phi_Q(Q)|\le \kappa^{d}|Q|$  and $\gcd(P)=\gcd(A)$, since $\kappa A$ contains a translate of $P$ and $\kappa P$ contains a translate of $A$. Hence, upon replacing $\kappa$ with a larger constant, the GAP $P$ has the required properties. 
\end{proof}

\subsection{Structural results}\label{subsec:structural}

In this subsection, we give an approximation result for $h$-fold sumsets of sets $A\subseteq [0,n-1]$ with $0\in A$ and $h\ge n^{1/\beta}$ for some $\beta>1$, in the sense that we find a GAP $P$ with $A\subseteq P$ such that $hA$ contains a proper translate of $chP$ for some constant $c>0$ depending only on $\beta$. That is, $hA$ contains a translate of $chP$, while $hP$ contains $hA$, so we obtain ``upper and lower bounds'' on $hA$ that are tight up to the constant $c$. 

Given a GAP $P = \{x+\sum_{i=1}^{d}n_{i}q_{i}:0\le n_{i}\le w_{i}-1\}$ and a positive integer $f$, we refer to the set $\{x+\sum_{i=1}^{\min(d,f)} n_{i}q_{i}:0\le n_{i}\le w_{i}-1\}$ as the {\it first $f$ dimensions} of $P$. The next result follows from combining the main results of Bilu \cite{Bilu}, which build on the seminal work of Freiman \cite{Frei1,Frei2} on the structure of subsets $A$ of $\mathbb{Z}$ for which  $|2A|/|A|$ is small. 

\begin{lem}[Theorems 1.2 and 1.3 of \cite{Bilu}] \label{lem:bilu}
For any positive integers $s$ and $d$ and any $\delta > 0$, there exists 
$C$ such that the following holds. For any finite set of integers $A$ with  $|2A|\le2^{d+1-\delta}|A|$,
there exists an $s$-proper GAP $\tilde{Q}$ such that $2A$ is contained in $\tilde{Q}$, $|\tilde{Q}|\le C |2A|$ and if $Q$ is the GAP with dimension at most $d$
corresponding to the first $d$ dimensions of $\tilde{Q}$, then $|Q|\ge C^{-1}|\tilde{Q}|$.
\end{lem}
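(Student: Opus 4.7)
The plan is to deduce this lemma from Bilu's refinement of Freiman's structure theorem. First I would apply Freiman's theorem in a quantitative form to $A$: the hypothesis $|2A| \le 2^{d+1-\delta}|A|$ controls the doubling of $A$, so there is a proper GAP $P_0$ containing $A$ of bounded dimension $d^* = d^*(d,\delta)$ with $|P_0| \le K|A|$ for some $K = K(d,\delta)$. This produces a GAP but it is neither $s$-proper nor dimension-controlled in the refined sense required, so it only serves as a starting point.

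To upgrade to $s$-properness, I would use the standard construction via Freiman $s$-isomorphisms: starting from $P_0$ (or rather, from a GAP containing $2A$ obtained by doubling), one passes through a Freiman $s$-isomorphic image in a suitable cyclic group and combines Ruzsa's covering lemma with a dilation or refinement step to produce an $s$-proper GAP $\tilde{Q}$ containing $2A$ with $|\tilde{Q}| \le C_1|2A|$, where $C_1$ depends only on $s$, $d$ and $\delta$. This gives the first two conclusions.

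The main obstacle — and the heart of Bilu's contribution — is the dimension-reduction assertion that the first $d$ dimensions of $\tilde{Q}$ already account for a constant fraction of $|\tilde{Q}|$. The intuition is that a truly $d'$-dimensional proper GAP $R$ with all widths at least two satisfies $|2R| \ge 2^{d'}|R|$, so the hypothesis $|2A| \le 2^{d+1-\delta}|A|$ forces any dimensions of $\tilde{Q}$ beyond the $d$-th to have widths bounded by a constant depending only on $d$, $\delta$ and the constants coming from the previous steps. Concretely, if one orders the widths of $\tilde{Q}$ as $w_1 \ge \cdots \ge w_{d^*}$ in decreasing order, then the doubling hypothesis forces $\prod_{i>d} w_i = O_{d,\delta}(1)$, so setting $Q$ to be the GAP on the first $d$ differences gives $|Q| \ge C^{-1}|\tilde{Q}|$. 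Carrying out this step while simultaneously preserving $s$-properness and the containment $2A \subseteq \tilde{Q}$ is the precise content of Theorems 1.2 and 1.3 of \cite{Bilu}, which I would invoke rather than reprove. The delicate bookkeeping that keeps all three parameters (doubling, dimension, and $s$-properness) under control at the stated quantitative level is what makes the argument nontrivial and is the principal technical challenge.
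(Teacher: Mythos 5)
The paper gives no proof of this lemma at all, treating it as a direct citation of Theorems 1.2 and 1.3 of \cite{Bilu}, and your proposal ultimately invokes the same reference, so the approaches coincide. One small overstatement in your heuristic for the dimension-reduction step: a proper $d'$-dimensional GAP $R$ with all widths at least two (and $2R$ proper) satisfies $|2R|\ge (3/2)^{d'}|R|$, not $|2R|\ge 2^{d'}|R|$ --- the ratio $\prod_i(2-1/w_i)$ only approaches $2^{d'}$ as the widths grow --- which is precisely why the quantitative bookkeeping you defer to Bilu is genuinely needed rather than an immediate consequence of the doubling hypothesis.
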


Observe that if $Q$ is a $d$-dimensional GAP with widths $(w_1,\ldots,w_d)$, then $|2Q|\leq \prod_{i=1}^{d}(2w_i-1)$, with equality if and only if $2Q$ is proper. The following lemma improves on this upper bound on $|2Q|$ by an additive factor of $\textrm{Vol}(Q)$ if $Q$ is not proper. It is best possible and gives a slight quantitative improvement on a result of Szemer\'edi and Vu~\cite[Lemma 4.2]{SV2}. 

\begin{lem} \label{lem:nonproper-doubling} If $Q$ is a $d$-dimensional GAP with widths $(w_1,\ldots,w_d)$ which is not proper, then 
$|2Q|+\textrm{Vol}(Q) \leq \prod_{i=1}^{d}(2w_i-1)$. Hence, $|2Q| < (2^{d}-1)\textrm{Vol}(Q) < 2^{d-c_d}\textrm{Vol}(Q)$, where $c_d=2^{-d}$. 
\end{lem}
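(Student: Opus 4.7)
The plan is to view $Q$ through its lattice of relations. Write $Q = \{x + \sum_{i=1}^d n_i q_i : 0 \le n_i \le w_i - 1\}$, set $B_1 = \prod_{i=1}^d [0, w_i-1]$ and $B_2 = \prod_{i=1}^d [0, 2w_i-2]$, and consider the group homomorphism $\psi : \mathbb{Z}^d \to \mathbb{Z}$ given by $\psi(n_1,\dots,n_d) = \sum_i n_i q_i$, so that $Q = x + \psi(B_1)$ and $2Q = 2x + \psi(B_2)$. Properness of $Q$ is exactly the injectivity of $\psi$ on $B_1$; since $Q$ is assumed not proper, there are distinct $n, n' \in B_1$ with $\psi(n) = \psi(n')$, and the vector $m = n - n'$ is a nonzero element of the kernel $L := \ker \psi$ satisfying $|m_i| \le w_i - 1$ for every $i$.

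The core estimate is obtained by counting collisions in $B_2$ coming from translation by this single vector $m$. Because $m \in L$, any two elements of $B_2$ that differ by an integer multiple of $m$ are mapped by $\psi$ to the same point of $2Q - 2x$. Hence, if $r$ denotes the number of orbits of $B_2$ under the action $n \mapsto n + m$, then $|2Q| \le r$. Writing $N$ for the number of $n \in B_2$ with $n + m \in B_2$, a straightforward orbit count gives $r = |B_2| - N$, since summing $(|\mathcal{O}|-1)$ over orbits $\mathcal{O}$ yields exactly $N$. A coordinate-wise computation shows
\[
N = \prod_{i=1}^d \bigl(2w_i - 1 - |m_i|\bigr) \ge \prod_{i=1}^d w_i = \textrm{Vol}(Q),
\]
the inequality using $|m_i| \le w_i - 1$. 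Putting these together,
\[
|2Q| \le r = |B_2| - N \le \prod_{i=1}^d (2w_i - 1) - \textrm{Vol}(Q),
\]
which is precisely the first claimed inequality.

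For the two numerical consequences, the bound $\prod_i (2w_i - 1) < 2^d \prod_i w_i = 2^d\,\textrm{Vol}(Q)$ immediately gives $|2Q| < (2^d - 1)\textrm{Vol}(Q)$. Finally, $(2^d - 1)/2^d = 1 - 2^{-d}$, and from $\ln(1 - t) < -t$ for $0 < t < 1$ applied with $t = 2^{-d}$, one gets $1 - 2^{-d} < 2^{-2^{-d}}$, so $(2^d-1) < 2^{d - 2^{-d}}$, yielding the final inequality with $c_d = 2^{-d}$.

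The only step that requires any care is the orbit-counting identity $r = |B_2|-N$; everything else is routine. The essential insight—namely that non-properness gives a short kernel vector, and a short kernel vector forces many coincidences in $B_2$ because it translates a substantial sub-box of $B_2$ back into $B_2$—is what drives the bound, and it is what provides the clean additive defect of $\textrm{Vol}(Q)$ rather than a multiplicative improvement on $\prod(2w_i-1)$ alone.
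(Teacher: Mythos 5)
Your proof is correct and follows essentially the same route as the paper's: both extract a nonzero kernel vector $z$ (your $m$) with $|z_i|\le w_i-1$ from non-properness and bound $|2Q|$ by the number of equivalence classes of $B=\prod_i[0,2w_i-2]$ under translation by that vector, reaching the same $\prod_i(2w_i-1)-\prod_i w_i$ bound. The paper phrases this via ``compressed'' elements (representatives that cannot be shifted by $-z$ within $B$) rather than an explicit orbit count, and it simply exhibits a sub-box of $\prod_i w_i$ non-compressed points instead of computing $N=\prod_i(2w_i-1-|m_i|)$ exactly, but the underlying count is identical.
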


\begin{proof} 
The $d$-dimensional GAP $Q$ has the form $Q=\{x+\sum_{i=1}^{d}n_{i}q_{i}:0\le n_{i}\le w_{i}-1\}$ and satisfies $\textrm{Vol}(Q)=\prod_{i=1}^d w_i$. 
Consider the linear map $\psi:\mathbb{Z}^d \rightarrow \mathbb{Z}$ given by $\psi(n_1,\ldots,n_d)=x+\sum_{i=1}^d n_iq_i$. If $Q$ is not proper, there 
are distinct $v=(n_1,\ldots,n_d)$, $v'=(n_1',\ldots,n_d')$ with $0 \leq n_i,n_i' \leq w_i-1$ for $i \in [d]$ which satisfy $\psi(v)=\psi(v')$. Let $z:=v-v' \in \mathbb{Z}^d \setminus \{0\}$, so that $\psi(z)=\psi(v)-\psi(v')=0$. Furthermore, letting $z=(z_1,\ldots,z_d)$, we have $|z_i| \leq w_i-1$ for $i \in [d]$. 

Consider the box $B=[0,2w_1-2] \times \cdots \times [0,2w_d-2] \subset \mathbb{Z}^d$. Call $b \in B$ \emph{compressed} if $b-z \not \in B$. Note that $|2Q|$ is at most the number of 
compressed elements in $B$ as $2Q=\{\psi(b):b~\textrm{compressed}\}$.  Observe that if $b=(b_1,\ldots,b_d) \in B$ satisfies, for all $i \in [d]$, that $b_i \geq w_i-1$ if $z_i \geq 0$ and $b_i \leq w_i-1$ if $z_i<0$, then $b$ is not compressed. So there are at least $\prod_{i=1}^d w_i=\textrm{Vol}(Q)$ elements in $B$ which are not compressed. Hence, $|2Q| \leq |B|-\textrm{Vol}(Q) =\prod_{i=1}^d (2w_i-1) - \textrm{Vol}(Q)$. 
\end{proof} 

The following corollary of Lemma \ref{lem:nonproper-doubling} will also be crucial in the next subsection when we come to study non-proper GAPs. 

\begin{lem}\label{lem:non-proper-d-1}
For every positive integer $d$, there exist $c_d'>0$ and a positive integer $C_d'$ such that the following holds. Let $A$ be a proper homogeneous $d$-dimensional GAP with all widths at least $C_d'$. If $2A$ is not proper, then $C_d'A$ contains a proper homogeneous $(d-1)$-dimensional GAP $Q$ of size at least $c_d' |A|$. Furthermore, $\gcd(Q)=\gcd(A)$ and $C_d' Q$ contains a translate of $A$. 
\end{lem}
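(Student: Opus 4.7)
The proof hinges on combining Lemma \ref{lem:nonproper-doubling}, Lemma \ref{lem:bilu}, and Corollary \ref{cor:non-reduced}. Since $A$ is proper but $2A$ is not, Lemma \ref{lem:nonproper-doubling} gives $|2A|<2^{d-c_d}|A|$ with $c_d=2^{-d}>0$. This small doubling lets us apply Lemma \ref{lem:bilu} with target dimension $d-1$, parameter $\delta=c_d$, and a sufficiently large constant $s=s(d)$ (so that the resulting Freiman $s$-isomorphism extends to the $\kappa$-fold sumset we will eventually need). We obtain an $s$-proper GAP $\tilde{Q}$ containing $2A$ with $|\tilde{Q}|\le C_0|2A|$, together with a proper $(d-1)$-dimensional sub-GAP $Q_0$ (the first $d-1$ dimensions of $\tilde{Q}$) satisfying $|Q_0|\ge C_0^{-1}|\tilde{Q}|$, for a constant $C_0=C_0(d)$.

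After translating $A$ so that $0\in A$, we have $A\subseteq 2A\subseteq\tilde{Q}$, with $|A|$ at least a constant fraction of $|\tilde{Q}|$. Since $\tilde{Q}$ is a disjoint union of at most $C_0$ translates of $Q_0$, pigeonhole yields a translate $Q_0+t$ such that $A':=(A\cap(Q_0+t))-t\subseteq Q_0$ has density at least some $c=c(d)>0$. The hypothesis that every width of $A$ is at least $C_d'$ translates (via the volume estimate $|Q_0|\ge$~const.~$\cdot|A|$ and a preliminary trimming of any degenerate dimensions of $\tilde Q$) into a lower bound on the minimum width of $Q_0$ large enough to invoke Corollary \ref{cor:non-reduced}(2). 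Applying that Corollary to $A'\subseteq Q_0$ produces a constant $\kappa=\kappa(d)$ and a proper GAP $P$ of dimension at most $d-1$ with $|P|\le\kappa|Q_0|$, $\kappa P$ containing a translate of $A'$, $\kappa A'$ containing a translate of $P$, and $\gcd(P)=\gcd(A')$.

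We take $Q$ to be the translate of $P$ sitting inside $\kappa A$ (obtained from the translate of $P$ in $\kappa A'$ by shifting by $\kappa t$), giving $Q\subseteq\kappa A=C_d'A$; the size bound $|Q|=|P|\ge c_d'|A|$ is then obtained by combining the density estimate $|A'|\ge c|A|/C_0$ with the upper bound $|A'|\le|\kappa P|\le\kappa^{d-1}|P|$ from Lemma \ref{lem:GAP-size}. Crucially, once the two inclusions $Q\subseteq C_d'A$ and $C_d'Q\supseteq$~translate of $A$ are both established, the equality $\gcd(Q)=\gcd(A)$, and hence the homogeneity of $Q$, are automatic by divisibility: $Q-Q\subseteq C_d'A-C_d'A\subseteq\gcd(A)\mathbb{Z}$ forces $\gcd(A)\mid\gcd(Q)$, and $A-A\subseteq C_d'Q-C_d'Q\subseteq\gcd(Q)\mathbb{Z}$ forces the reverse. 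I expect the main obstacle to be verifying the second inclusion $C_d'Q\supseteq$~translate of the \emph{full} $A$ (not just of $A'$); I would address this by using the Freiman $s$-isomorphism from $\tilde Q$ to a box in $\mathbb{Z}^{d^*}$ together with Lemmas \ref{lem:cGAP-sub} and \ref{lem:cGAP} to show that $\tilde Q$ (and therefore $A$) is contained in a translate of a constant dilate of $Q_0$, which can then be absorbed into a further constant dilate of $P$ by enlarging $\kappa$ by a bounded factor.
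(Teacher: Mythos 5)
Your proposal has two genuine gaps, one of which you do not flag.

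\textbf{Misapplication of Lemma \ref{lem:nonproper-doubling}.} You claim that because $A$ is proper and $2A$ is not, Lemma \ref{lem:nonproper-doubling} yields $|2A|<2^{d-c_d}|A|$. But the hypothesis of that lemma is that the GAP whose doubling is being bounded is \emph{itself} not proper. Here $A$ is proper, so the lemma does not apply to $A$; applied to the non-proper GAP $2A$ it gives $|4A|\le 2^{d-c_d}\,\mathrm{Vol}(2A)$, which is an estimate on $|4A|/\mathrm{Vol}(2A)$, not on $|2A|/|A|$. The stronger claim you want is in fact false: take $d=2$, $A=\{n_1+n_2(2w-2):0\le n_1,n_2\le w-1\}$ with $w$ large. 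Then $A$ is proper, $2A$ is not proper (the kernel vector $(2w-2,-1)$ witnesses this), and a direct count gives $|2A|=(2w-2)(2w-1)+1$, so $|2A|/|A|\to 4=2^d$ as $w\to\infty$. Thus $|2A|>2^{d-c_d}|A|$ for this family. The paper handles this by a dichotomy: either $|2A|\le 2^{d-c_d/2}|A|$, or $|2A|>2^{d-c_d/2}|A|$ forces $|2A|>2^{-c_d/2}\mathrm{Vol}(2A)$ and hence $|4A|\le 2^{d-c_d/2}|2A|$; one then applies Lemma \ref{lem:bilu} to $A$ or to $2A$ accordingly. This case split cannot be skipped.

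\textbf{The proposed resolution of the ``full $A$'' obstacle does not work.} You correctly identify that covering only a dense slice $A'$ of $A$ is insufficient, and you propose to show that $\tilde Q$ (hence $A$) lies in a translate of a bounded dilate of $Q_0$, then absorb into $P$. But this containment is false in general: writing $\tilde Q = Q_0\oplus W$ with $|W|\le C_0$ bounds only the \emph{widths} of $W$, not its \emph{differences}, which can be arbitrarily large compared with the diameter of $Q_0$; under the Freiman $s$-isomorphism $\tilde Q$ is a full box $B_0\times B_W$ and $Q_0$ is a slice $B_0\times\{w_0\}$, and no bounded dilate of such a slice contains the full box. The paper instead proves that $C'^{-1}A\subseteq C'^{-1}Q'$ (so that $A$ itself, not $\tilde Q$, lies in a bounded dilate of the low-dimensional part): it uses $0\in A$ and homogeneity to see that $tx\in\tilde Q$ for all $1\le t\le C'$ whenever $x\in C'^{-1}A$, then uses $2$-properness of $\tilde Q$ to show that the decomposition $tx=w_t+q_t$ forces $w_t=tw_1$ and $q_t=tq_1$; if $w_1\ne 0$ this yields $|W|\ge C'$, a contradiction, so $w_1=0$ and $x\in C'^{-1}Q'$. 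With that established one applies Lemma \ref{lem:cGAP} to get $4Q'\supseteq$ a translate of $A$, and then Corollary \ref{cor:non-reduced}(2) is applied to $A$ sitting densely inside $4Q'$ --- not to a pigeonholed slice. Replacing your pigeonhole step with this containment argument is essential to obtaining the conclusion about all of $A$.
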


\begin{proof}
First, we claim that, without loss of generality, we may assume that $0\in A$. Indeed, let $a\in A$, so that $0\in A-a$. Then $2A$ is proper if and only if $2(A-a)$ is proper. Assume that $C_d'(A-a)$ contains a proper homogeneous $(d-1)$-dimensional GAP $Q'$ of size at least $c_d'|A|$ such that $\gcd(Q')=\gcd(A-a)=\gcd(A)$ and $C_d'Q'$ contains a translate of $A-a$. Then, since $\gcd(Q')=\gcd(A)$ and $A$ is homogeneous, we have $\gcd(Q')|a$, so $C_d'A$ contains the proper homogeneous $(d-1)$-dimensional GAP $Q=Q'+C_d'a$ of size at least $c_d'|A|$, where $\gcd(Q)=\gcd(A)$ and $C_d'Q$ contains a translate of $A$. 

Suppose then that $0 \in A$. 
Since $2A$ is not proper, Lemma \ref{lem:nonproper-doubling} implies that 
\[
|4A| \le 2^{d-c_{d}}\textrm{Vol}(2A).
\]
We have that either $
|2A|\le 2^{d-c_d/2}|A|$
or 
$
|2A| > 2^{d-c_d/2}|A|
$, in which case, since $A$ is proper, $|2A| >2^{d-c_d/2}\textrm{Vol}(A) > 2^{-c_d/2}\textrm{Vol}(2A)$,
so, by the inequality above, $|4A| \le 2^{d-c_d/2}|2A|$. Hence, for either $r=1$ or $r=2$, we have $|2^rA|\le 2^{d-c_d/2}|2^{r-1}A|$. 

By Lemma \ref{lem:bilu}, for either $r=1$ or $r=2$, $2^rA$ is contained in a $2$-proper GAP $\tilde{Q}$ such that $|\tilde{Q}|\le K_{d-1}'|2^rA| \le K_{d-1}|2A|$ and the first $(d-1)$-dimensions $Q'$ of $\tilde{Q}$ satisfies $|Q'|\ge K_{d-1}^{-1}|\tilde{Q}|$, where $K_{d-1}$ and $K_{d-1}'$ are constants depending only on $d$. In particular, as $A\subseteq 2A$, this property holds for $r=1$. 
Let $\tilde{Q}=Q'\oplus W$. Since $0\in A$, we have $0\in \tilde{Q}$ and we can assume, without loss of generality, that $0\in Q'$ and $0\in W$. 

We claim that for some positive integer $C'$ depending only on $K_{d-1}$, $C'^{-1}A$ is contained in $C'^{-1}Q'$. Indeed, assume that there exists $x\in C'^{-1}A$ which is not contained in $C'^{-1}Q'$. Note that, since $0\in A$, $cA\subseteq A$ for all $0<c\le 1$. Thus, we have that $tx \in \tilde{Q}$ for all positive integers $t\le C'$. 
Since $\tilde{Q}$ is $2$-proper, each $tx$ has a unique representation as $w_t+q_t$, where $w_t\in W$ and $q_t \in Q'$. Therefore, since $2tx = (t-1)x + (t+1)x$ and again using that $\tilde{Q}$ is $2$-proper, we have $2w_t=w_{t-1}+w_{t+1}$ and $2q_t=q_{t-1}+q_{t+1}$ for all $t\in [1,C')$, where $w_0=q_0=0$. In particular, $q_t=tq_1$ and $w_t = tw_1$. Thus, if $w_1\ne 0$, then $w_1,\dots,w_{C'}$ are distinct, which implies that $|W| = |\tilde{Q}|/|Q'| \ge C'$, a contradiction for sufficiently large $C'$. On the other hand, if $w_1=0$, then $tx = q_t  \in Q'$ for all $t\in [C']$, which implies that $x \in C'^{-1}Q'$. 

From the above claim together with Lemma \ref{lem:cGAP}, we have that $4Q'$ contains a translate of $A$. 
Furthermore, $|4Q'|\ll_d |\tilde{Q}| \ll_d |A|$.  
Thus, by applying the second claim of Corollary \ref{cor:non-reduced} to $4Q'$, we can find a constant $C_d'$ and a proper GAP $P$ of dimension at most $d-1$ such that  $C_d'A$ contains a translate of $P$, 
$C_d'P$ contains a translate of $A$ and $\gcd(P)=\gcd(A)$. Since $\gcd(P)=\gcd(A)$ and $0\in A$, the translate $Q$ of $P$ contained in $C_d'A$ is homogeneous. Finally, since $C_d'P$ contains a translate of $A$, so does $C_d'Q$ and, hence, $|Q|\ge c_d'|A|$ for some $c_d' > 0$.  
\end{proof}

From Lemma \ref{lem:non-proper-d-1}, we obtain the following corollary. 
\begin{cor}\label{cor:nonproper-doub}
For every $d$, there exists $C_d>0$ such that if $A$ is a proper homogeneous $d$-dimensional GAP and $2A$ is not proper, then, for all $k \ge 1$, $|kA|\le k^{d-1} C_d|A|$. 
\end{cor}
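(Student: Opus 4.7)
The plan is to leverage Lemma \ref{lem:non-proper-d-1}, which, when all widths of $A$ are at least $C_d'$, produces a proper homogeneous $(d-1)$-dimensional GAP $Q$ such that $C_d'Q$ contains a translate of $A$. Once this is in hand, $kA$ is contained in a translate of $kC_d'Q$; since $Q$ is a proper $(d-1)$-dimensional GAP, the width-product bound gives $|kC_d'Q| \le (kC_d')^{d-1}|Q|$, and combining with $|Q| \le |C_d'A| \le (C_d')^d|A|$ (from $Q \subseteq C_d'A$ together with $A$ being proper) yields $|kA| \le k^{d-1}(C_d')^{2d-1}|A|$ in this main case.

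To handle the case when some width of $A$ is less than $C_d'$, I would proceed by induction on $d$, with the base case $d=1$ being vacuous since a proper one-dimensional GAP always has proper doubling. For the inductive step, let $u_d < C_d'$ be a small width and write $A = A' + P_d$, where $A' = \{\sum_{i<d}n_iq_i : 0 \le n_i \le u_i - 1\}$ is the $(d-1)$-dimensional projection and $P_d = \{0, q_d, \dots, (u_d-1)q_d\}$. If $2A'$ is not proper, the inductive hypothesis yields $|kA'| \le k^{d-2}C_{d-1}|A'|$, and combining with $|kP_d| \le ku_d$ together with $|A'| = |A|/u_d$ gives $|kA| \le |kA'|\cdot|kP_d| \le k^{d-1}C_{d-1}|A|$.

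The main obstacle is the subcase where $2A'$ is proper but $2A$ is not. Any non-trivial linear relation $\sum_{i=1}^{d}r_iq_i = 0$ with $|r_i| \le 2(u_i-1)$ witnessing the non-properness of $2A$ must then have $r_d \ne 0$, since otherwise it would witness $2A'$ being non-proper. Hence $|r_d|q_d = -\sum_{i<d}r_iq_i$ lies in the $\mathbb{Z}$-span of $q_1, \dots, q_{d-1}$. Decomposing $kP_d$ by writing each $j \in [0, k(u_d-1)]$ as $j = m|r_d| + s$ with $0 \le s < |r_d|$, we find that $kA$ is covered by at most $|r_d| \le 2u_d$ translates of a set of the form $kA' + \{m(|r_d|q_d) : 0 \le m \le ku_d/|r_d|\}$. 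Since $|r_d|q_d$ lies in the $(d-1)$-dimensional span, this set is contained in a $(d-1)$-dimensional GAP with widths bounded by $ku_i(1 + 2u_d) \le ku_i(1 + 2C_d')$, which gives a bound of the form $2(1+2C_d')^{d-1}k^{d-1}|A|$. Taking $C_d$ to be the maximum of the three bounds produced by the cases completes the induction.
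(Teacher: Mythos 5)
Your proposal is correct and follows essentially the same strategy as the paper: induction on $d$, splitting into (i) the case where all widths exceed $C_d'$, handled via Lemma \ref{lem:non-proper-d-1} exactly as you describe; (ii) the small-width case with $2A'$ non-proper, handled by the inductive hypothesis; and (iii) the small-width case with $2A'$ proper, handled by extracting an integer relation witnessing non-properness of $2A$ with nonzero $q_d$-coefficient and using it to fold $kP_d$ into the $(d-1)$-dimensional span. The paper phrases step (iii) slightly differently (writing $cq_1 \in 2(A'-A')$ and containing $kA$ in a translate of $[0,c-1]q_1 + C_d'\cdot 2k(A'-A') + kA'$ rather than your explicit modular decomposition of the last coordinate), but the idea and resulting bound are the same up to the precise constant.
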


\begin{proof}
We prove the statement by induction on $d$. The base case $d=1$ is trivial. 

Let $d\ge 2$. We first assume that all the widths of $A$ are at least $C_d'$, where $C_d'$ is the constant in Lemma \ref{lem:non-proper-d-1}. By Lemma \ref{lem:non-proper-d-1}, we can find a $(d-1)$-dimensional GAP $Q$ such that $C_d'Q$ contains a translate of $A$ and $C_d'A$ contains a translate of $Q$. Hence, 
\[
|kA| \le |kC_d'Q| \le (kC_d')^{d-1}|Q| \le (kC_d')^{d-1}|C_d'A| \le (kC_d')^{d-1} \cdot C_d'^d|A| \le k^{d-1}C_d|A|, 
\]
assuming that $C_d$ is chosen so that $C_d\ge C_d'^{2d-1}$. 

Next, consider the case where $A = \{\sum_{j=1}^{d} x_jq_j:x_j\in I_j\}$ with $I_j$ an interval of length $w_j$, $w_1 \le w_2 \le \dots \le w_d$ and $w_1 < C_d'$. Let $A'=\{\sum_{j=2}^{d} x_jq_j:x_j\in I_j\}$. If $2A'$ is not proper, then, by the induction hypothesis, $|kA'|\le k^{d-2}C_{d-1}|A'|$. Hence, \[ |kA| \le |kI_1 + kA'| \le C_d'C_{d-1}k^{d-1}|A|\le k^{d-1}C_d|A|,\]
assuming that $C_d \ge C_d'C_{d-1}$. 

Thus, we may assume that $2A'$ is proper. To handle this case, note that, since $2A$ is not proper, $cq_1 \in 2(A'-A')$ for some integer $c\in (0,2C_d']$. Observe that $I_1$ is contained in a translate of $\{1,2,\dots,c-1\} + cJ_1$ for an interval $J_1$ of length at most $\lceil |I_1|/c+1 \rceil \le C_d'$. Hence, $kA$ is contained in a translate of \[\{0,q_1,2q_1,\dots,(c-1)q_1\} + \{xcq_1:x\in kJ_1\} + kA',\]
which is in turn contained in a translate of 
\[\{0,q_1,2q_1,\dots,(c-1)q_1\} + C_d'\cdot 2k(A'-A')+kA'.\]
Therefore,
\[
|kA|\le |[0,c-1]q_1 + (2C_d'+1)kA'-2C_d'kA'| \le 2C_d' \cdot (5C_d'k)^{d-1} 
|A'| \le k^{d-1}C_d|A|,
\]
assuming that $C_d \ge 5^{d}C_d'^{d}$. 

The desired statement thus follows for $C_d = \max(5^d C_d'^{d}, C_d'^{2d-1},C_d'C_{d-1})$.
\end{proof}

We now come to our main structural result. As mentioned above, this roughly says that, for $A\subseteq [0,n-1]$ with $0\in A$ and $h$ at least a small power of $n$, there is a GAP $P$ such that $A$ is contained in $P$, while, for some constant $c>0$, $hA$ contains a proper translate of $chP$ (see Corollary \ref{cor:AP-doubling} for the exact statement). Since $hA$ is contained in $hP$, the latter may be viewed as an approximation for $hA$. 
As indicated in the statement of the lemma, the GAP $P$ and its dimension are essentially determined at the step of  slowest growth as we iteratively double $A$. 

\begin{lem} \label{lem:AP-from-doubling}
For any $\beta>1$, there exist positive integers $T$, $C$ and $C'$ depending only on $\beta$ such that the following holds for $n$ sufficiently large. 

Let $A$ be a subset of $[0,n-1]$ with $0\in A$ and let $h$ be a positive integer with $n\le h^{\beta}$. Let $d'$ be the smallest integer for which there exists $z$ with $T \le z\le\log(h/T)$ such that $|2^{z+1}A|\le2^{d'+1/2}|2^{z}A|$ and let $y$ be the smallest such integer $z$. 
Then there exists a GAP $Q$ of dimension $d\le d'$ such that
\begin{enumerate}
\item $Q$ is centered, $|Q|\le C|2^{y+1}A|$ and $A$ is contained in $2^{-(y+1)}Q$. 
\item The set $C 2^{y+1}A$ contains a translate of $Q$. 
\item $C'^{-1} h2^{-(y+1)}Q$ is proper and $h > 1000 C 2^{y+1}$. 
\end{enumerate}
\end{lem}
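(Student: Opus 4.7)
My plan is to apply Bilu's refinement of Freiman's theorem (Lemma~\ref{lem:bilu}) to the sumset $2^yA$, which by hypothesis has small doubling, and then read off the candidate GAP from the resulting structure. As a preliminary observation, I would first bound $d'$ in terms of $\beta$: telescoping consecutive doublings over $z \in [T, \log(h/T)-1]$ gives $\prod_z |2^{z+1}A|/|2^zA| = |2^{\log(h/T)}A|/|2^TA| \le (h/T)(n-1) + 1 \le h^{\beta+1}/T$, while the number of terms is $\Theta(\log h)$. Therefore the geometric mean of the doublings is $2^{O(\beta)}$, forcing some $z$ in the range to have doubling at most $2^{O(\beta)}$, and hence $d' = O(\beta)$.

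Next, I would apply Lemma~\ref{lem:bilu} to $2^yA$ with dimension parameter $d'$, $\delta = 1/2$, and a parameter $s$ chosen large enough to support the properness claim below. This yields an $s$-proper GAP $\tilde{Q}$ of some dimension $D$ containing $2^{y+1}A$, with $|\tilde{Q}| \le C_0 |2^{y+1}A|$, and whose first $d \le d'$ coordinates form a sub-GAP $Q_0$ with $|Q_0| \ge C_0^{-1}|\tilde{Q}|$. Since $0 \in A \subseteq 2^{y+1}A \subseteq \tilde{Q}$, I would translate so $0 \in \tilde{Q}$; by Claim~\ref{claim:center}, both $\tilde{Q}$ and $Q_0$ are centered, hence homogeneous. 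This $Q_0$, possibly after a small modification to absorb the extra $D - d$ dimensions (whose total volume is bounded by $C_0$) into enlarged widths, is the candidate $Q$.

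To verify the three conclusions: for (1), the bound $|Q| \le |\tilde{Q}| \le C_0 |2^{y+1}A|$ is immediate and $Q$ is centered by construction; the inclusion $A \subseteq 2^{-(y+1)}Q$ exploits the Freiman $s$-isomorphism of $\tilde{Q}$ with $s \ge 2^{y+1}$: since $A \subseteq \tilde{Q}$ (via $0 \in A$) and $2^{y+1}a = a + \cdots + a \in \tilde{Q}$, the $s$-proper structure forces the representation of $2^{y+1}a$ in $\tilde Q$-coordinates to be exactly $2^{y+1}$ times the representation of $a$, so $a$'s coefficients are integral; the modification of $Q$ ensures the bounded extra-dimension contributions can be absorbed. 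For (2), since $|2^{y+1}A| \ge C_0^{-1}|\tilde Q|$, pigeonholing on the $W$-fiber of the decomposition $\tilde Q = Q \oplus W$ produces a subset of density $\gtrsim C_0^{-2}$ in a translate of $Q$, and Lemma~\ref{lem:non-reduced} (applied under $\phi_{\tilde Q}$) then yields a translate of $Q$ inside an $O(1)$-fold sumset of $2^{y+1}A$. For (3), choosing $s \ge C'^{-1}h\cdot 2^{-(y+1)}$ in Bilu's theorem makes $sQ$ proper, hence $C'^{-1}h\cdot 2^{-(y+1)}Q$ is proper; the inequality $h > 1000C\cdot 2^{y+1}$ follows from $2^{y+1} \le 2h/T$ by taking $T$ large enough relative to $C$.

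The principal obstacle is conclusion (1): showing that every element of $A$ lies in the $d$-dimensional set $2^{-(y+1)}Q$, not merely in the higher-dimensional $2^{-(y+1)}\tilde{Q}$. Controlling the $D - d$ extra dimensions, whose widths are bounded but nonzero, demands either a careful enlargement of the widths of $Q$ to absorb these contributions with only an $O(1)$ loss in volume, or a combinatorial argument showing that the extra coordinates of $\phi_{\tilde Q}(a)$ are forced to align with the first $d$ directions. A secondary subtlety is keeping the constants $C$ and $C'$ dependent only on $\beta$, despite the Bilu parameter $s$ a priori needing to grow like $h\cdot 2^{-(y+1)}$; this likely requires either a refined Freiman-type theorem with good $s$-dependence, or upgrading properness after a \emph{constant-parameter} application of Bilu via iterated use of Lemma~\ref{lem:non-proper-d-1} to descend to proper sub-GAPs at the required scale.
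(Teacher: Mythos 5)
Your overall scaffolding (bound $d'$ by telescoping, apply Bilu to $2^y A$, split $\tilde Q = Q \oplus W$, show $A \subseteq 2^{-(y+1)}Q$, pass to a better $Q$ via Lemma~\ref{lem:non-reduced}) is the same as the paper's. But the way you realize conclusions~(1) and~(3) has a genuine flaw, and the fix is not a secondary subtlety but the heart of the argument.

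You propose to apply Lemma~\ref{lem:bilu} with $s$ ``chosen large enough to support the properness claim,'' and your verification of~(1) explicitly requires $s\ge 2^{y+1}$. But the constant $C$ in Bilu's theorem depends on $s$, and $2^{y+1}$ is not bounded in terms of $\beta$ (it can be as large as $\Theta(h)$). So this choice would make $C$ and $C'$ depend on $h$ and $n$, which defeats the entire lemma. Moreover, what you flag as the ``principal obstacle'' --- the extra $D-d$ dimensions of $\tilde Q$ --- is in fact easy: the paper applies Bilu with $s=2$ only, and then kills the $W$-part by a counting argument. For $x\in A$ not in $2^{-(y+1)}Q$, write $tx = w_t + q_t$ in $\tilde Q$ for $t\le 2^{y+1}$; $2$-properness gives $w_t=tw_1$, $q_t=tq_1$, so if $w_1 \neq 0$ then $|W|\ge 2^{y+1}\ge 2^T$, which for $T$ large contradicts $|W|\le\tilde C$. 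Crucially, $T$ is a parameter in the \emph{hypothesis} on $y$, not in Bilu, so $T$ can be taken large in $\tilde C$ without feeding back into the constants; the paper even notes explicitly that $C$ ``only depends on $\tilde C$ and $d'$ but not on $T$.'' Your large-$s$ route would not allow this decoupling.

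For~(3), you mention ``iterated use of Lemma~\ref{lem:non-proper-d-1}'' as one option, which is indeed the right direction, but the actual argument requires a specific contradiction you don't outline. The paper lets $u$ be the largest integer with $2^{u-y-1}C^{-1}Q$ proper, and, assuming $C'^{-1}h2^{-(y+1)}Q$ is not proper (so $u < \log h - \log C' + \log C$), uses Corollary~\ref{cor:nonproper-doub} (which follows from Lemma~\ref{lem:non-proper-d-1}) to bound $|2^{k+u-y-1}C^{-1}Q|$. Combined with the relation between $Q$ and $A$ from conclusions~(1) and~(2), this yields $|2^{k-\log C+u}A| \le 2^{k(d-1)}C_d|2^uA|$, which is played off against the lower bound $|2^{k-\log C}\cdot 2^uA|\ge 2^{(d'-1/2)(k-\log C)}|2^uA|$ coming from the minimality of $d'$. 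This forces $u \ge \log h - \log T - O_\beta(1)$, contradicting the upper bound on $u$ when $C'$ is large. Without this contradiction argument, properness at scale $C'^{-1}h 2^{-(y+1)}$ does not follow from a constant-parameter Bilu application.

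In short: the plan is right in outline, but your concrete mechanism for~(1) and~(3) uses parameters that scale with $h$. The paper instead uses only $2$-properness from Bilu plus a pigeonhole on $|W|$ for~(1), and plus the $\textrm{Corollary}~\ref{cor:nonproper-doub}$ growth-rate contradiction for~(3); both are essential to keep the constants dependent only on $\beta$.
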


\begin{proof}
As $d'$ is the smallest integer for which there exists $z\in [T,\log (h/T)]$ such that $|2^{z+1}A| \le 2^{d'+1/2}|2^zA|$, we have that $|2^{u+1}A| > 2^{d'-1/2} |2^{u}A|$ for all $u\in [T, \log(h/T)]$. Hence, $$|(h/T)A|=|2^{\log(h/T)} A| = |2^T A|\prod_{u=T}^{\log(h/T)-1} \left(|2^{u+1}A|/|2^u A|\right)\ge 2^{(d'-1/2)(\log(h/T)-T)}$$ and, therefore, 
$$h^{1+\beta}/T \ge (h/T)n \ge |(h/T)A| \ge 2^{(d'-1/2)(\log(h/T)-T)},$$
where the first inequality uses $n\le h^{\beta}$ and the second uses $A \subseteq [0, n-1]$. Rearranging, we obtain $$(h/T2^T)^{d'-3/2-\beta}\le 2^{T(1+\beta)}T^{\beta}.$$ 
Note that the right-hand side is bounded in terms of $T$ and $\beta$. If $d' > 3/2+\beta$, then, for $n$  sufficiently large in terms of $\beta$ and $T$, since $h\ge n^{1/\beta}$, we would obtain a contradiction. Hence, we must have $d' \le 3/2+\beta<\beta+2$.  
In particular, $d'$ is upper bounded by a quantity depending only on $\beta$. 

By Lemma \ref{lem:bilu}, there exist $k$ and $\tilde{C}$ such that $2^{y+1}A$
is contained in a $2$-proper $k$-dimensional GAP $\tilde{Q}$ with
size at most $\tilde{C} |2^{y+1}A|$, where $\tilde{Q}=W\oplus Q$
with $Q$ being a GAP with dimension $d\le d'$ and $|Q|\ge\tilde{C}^{-1}|\tilde{Q}|$. 
Since $\tilde{Q}$ is $2$-proper, we thus also have $|W|\le\tilde{C}$. Moreover, $Q$ is also $2$-proper and, in particular, proper.  

Since $0\in A$, there exists a translate of $Q$ by $w\in W$ which contains $0$. By replacing $W$ with $W-w$ and $Q$ with $Q+w$, we can assume without loss of generality that this translate is equal to $Q$ and, therefore, that $0\in Q$. We show that $A$ must be contained in $2^{-y-1}Q$. Indeed, assume that $A$ contains an element $x$ outside $2^{-y-1}Q$. Note that $tx\in2^{y+1}A$ for
all positive integers $t \le 2^{y+1}$. 
For each $t\in [2^{y+1}]$, by the $2$-properness of $\tilde{Q}$, we can uniquely write $tx = w_t+q_t$, where $w_t \in W$ and $q_t \in Q$. Since $\tilde{Q}$ is $2$-proper and 
$$2w_t+2q_t=2tx=(t-1)x+(t+1)x=(w_{t-1}+w_{t+1})+(q_{t-1}+q_{t+1}),$$ 
we have $2w_t=w_{t-1}+w_{t+1}$ and $2q_t = q_{t-1}+q_{t+1}$ for all $1\le t<2^{y+1}$. Hence, since $q_0 = w_0 = 0$, we have $q_t =tq_1$ and $w_t = tw_1$ for all $1\le t\leq2^{y+1}$. If $w_1=0$, then, since $q_{2^{y+1}} \in Q$, we have $x = q_1 \in 2^{-y-1}Q$, as desired. If, instead, $w_1 \ne 0$, we have $w_t = tw_1 \ne 0$ for all $t\le 2^{y+1}$. Therefore, $|W| \ge 2^{y+1} \ge 2^{T}$. 
However, for $T$ chosen sufficiently large, this contradicts the bound $|W|\le\tilde{C}$. 

Hence, there exists a proper GAP $Q$ with dimension $d\le d'$ such that $0\in Q$, $|Q|\le \tilde{C} |2^{y+1}A|$ and 
$A$ is a subset of $2^{-(y+1)}Q$. By Claim \ref{claim:center}, $Q$ is  centered, that is,  $Q=\{\sum_{i=1}^{d}n_iq_i:n_i \in [a_i,b_i]\}$ with $a_i \le 0 \le b_i$ for all $i\in [d]$. We can also assume that the minimum width $\min(b_i-a_i+1)$ of $Q$ is at least $2^{y+1} \ge 2^{T+1}$. Otherwise, if $b_j-a_j < 2^{y+1}$, then, since $b_ja_j \le 0$, we have  $[2^{-(y+1)}a_j,2^{-(y+1)}b_j]\cap \mathbb{Z}=\{0\}$. 
Thus, letting 
$Q^*=\{\sum_{i\le d,i\ne j}n_iq_i:n_i \in [a_i,b_i]\}$, we see that $0\in Q^*$, $|Q^*|\le \tilde{C}|2^{y+1}A|$ and $A$ is a subset of $2^{-(y+1)}Q^*$. 

Let $\phi:Q\to \mathbb{Z}^{d}$ be the identification map. Note that $\phi(2^{y+1}A)$ is a dense subset of the box $\phi(Q)$.
By the first part of Corollary \ref{cor:non-reduced}, for $T$ chosen sufficiently large in $\tilde{C}$ and $d'$ (so that, in particular, the minimum width of $Q$ is sufficiently large in $\tilde{C}$ and $d'$), we can find $\hat{C} \ge \tilde{C}$ depending only on $\tilde{C}$ and $d'$ and a $d$-dimensional GAP $Q'$ of dimension $d$ in $\mathbb{Z}^d$ such that $Q'$ is contained in a translate of $\hat{C}\phi(Q)$, $Q'$ contains $\phi(Q)\cap \overline{\langle \phi(2^{y+1}A)\rangle}$ and $\hat{C}\phi(2^{y+1}A)$ contains a translate of $Q'$. Since $0\in A$, we have $\overline{\langle \phi(2^{y+1}A)\rangle} \supseteq \overline{\langle \phi(A)\rangle}$. Since $\phi(A)$ is a subset of $\phi(2^{-y-1}Q)\cap \overline{\langle \phi(A)\rangle}$ and $Q'$ contains $\phi(Q)\cap \overline {\langle \phi(2^{y+1}A)\rangle}$, we have that $\phi(A)$ is contained in $2^{-y-1}Q'$.  
Furthermore, since $Q'$ is contained in a translate of $\hat{C}\phi(Q)$, $\hat{C}^{-1}Q'$ is contained in a translate of $\phi(Q)$. Hence, as $Q'$ and $Q$ are proper, 
$\hat{C}^{-1}\phi^{-1}(Q')$ is proper. Finally, note that $|Q'|\le \hat{C}^{d}|Q|\le \hat{C}^d\tilde{C}|2^{y+1}A|$. Replacing $Q$ by $\phi^{-1}(Q')$, we may therefore assume that $Q$ is a GAP with the following properties: $0\in Q$ and so $Q$ is centered, $A$ is a subset of $2^{-y-1}Q$ and, for some $C$ depending only on $\tilde{C}$ and $d'$, $|Q|\le C|2^{y+1}A|$, $C^{-1}Q$ is proper and $C2^{y+1}A$ contains a translate of $Q$. Note crucially that $C$ only depends on $\tilde{C}$ and $d'$ but not on $T$. It therefore only remains to verify condition $3$ of the lemma, which we now turn to.

By taking $T$ sufficiently large in terms of $C$, we can guarantee that $h \geq 2^y T > 1000C2^{y+1}$. 
Let $C'$ be a sufficiently large constant depending on $\beta$ to be chosen later. If $C'^{-1}h < C^{-1}2^{y+1}$, then $C'^{-1}h2^{-(y+1)}Q$ is proper, since $C^{-1}Q$ is proper. Thus, we can assume that $C'^{-1}h \ge C^{-1}2^{y+1}$. Let $u\ge y+1$ be the largest integer such that $2^{u-y-1}C^{-1}Q$ is proper. Note that $u$ must exist since $C^{-1}Q$ is proper. If $C'^{-1}h2^{-(y+1)}Q$ is not proper, then, $C'^{-1}h > C^{-1}2^{u}$ so $u < \log h - \log C' + \log C$. 
By Corollary \ref{cor:nonproper-doub}, we therefore have that 
\[
|2^{k+u-y-1}C^{-1}Q|\le 2^{k(d-1)}C_d |2^{u-y-1}C^{-1}Q|.
\]
Since $A \subseteq 2^{-(y+1)} Q$, 
\[
|2^{k+u-y-1}2^{y+1}C^{-1}A|\le 2^{k(d-1)}C_d |2^{u-y-1}C^{-1}Q|.
\]
Furthermore, we have that a translate of $Q$ is contained in $C2^{y+1}A$,
so 
\[
|2^{k-\log C+u}A| = |2^{k+u-y-1}2^{y+1}C^{-1}A|\le2^{k(d-1)}C_d |2^{u}A|.
\]
Recall that, by the definition of $d'$, we have $|2^{x+1}A| > 2^{d'-1/2} |2^{x}A|$ for all $x\in [T, \log(h/T)]$. 
Moreover, we have that $u\ge y+1\ge T+1$.  
Thus, for $k\le\log(h/T)-u$, 
\[
|2^{k-\log C+u}A| = |2^{k-\log C} \cdot 2^{u}A| \ge 2^{(d'-1/2)(k - \log C)}|2^{u}A|,
\]
so we must have that, for all $1\le k \le \log(h/T)-u$, 
\[
k(d-1) + \log C_d \ge (d'-1/2)(k-\log C).
\]
This implies that 
\[
(\log(h/T)-u)/2 \le \log C_d+\log C(d'-1/2),
\]
so 
\[
u \ge \log h - \log T - 2(\log C_d+\log C(d'-1/2)).
\]
However, $u < \log h - \log C' + \log C$. Thus, since $d'<\beta+2$, by choosing $C'$ sufficiently large in terms of $C_d$, $C$, $\beta$ and $T$,  we arrive at a contradiction. Thus, $C'^{-1}h2^{-(y+1)}Q$ is proper. 
\end{proof}

The following definition, arising from  Lemma~\ref{lem:AP-from-doubling}, will be crucial in the proof of Theorem~\ref{thm:hom-AP-build}. 

\begin{defn}
Given positive integers $h$ and $n$  with $h \le n$ and a subset $A\subseteq [0,n-1]$ with $0\in A$, the \textit{$h$-dimension} of $A$ is the least dimension $d$ obtained from applying Lemma~\ref{lem:AP-from-doubling} with some $\beta>1$ such that $n\le h^{\beta}$. 
\end{defn}

Crucially, the proof of Lemma \ref{lem:AP-from-doubling}  yields that the $h$-dimension of any subset of $[0, n-1]$ with $n \leq h^{\beta}$ 
is bounded by a constant depending only on $\beta$.

We may now restate the conclusion of Lemma~\ref{lem:AP-from-doubling} succinctly in terms of the notion of $h$-dimension.

\begin{cor} \label{cor:AP-doubling}
For every $\beta > 1$, there exists a constant $c_\beta > 0$ such that the following holds. 
Let $A$ be a subset of $[0, n-1]$ with $0\in A$, let $h$ be a  positive integer such that $n\le h^\beta$ and let $d$ be the $h$-dimension of $A$. Then there exists a $d$-dimensional
GAP $P$ such that $A$ is contained in $P$ and $hA$ contains a proper translate of $c_{\beta}hP$. 
\end{cor}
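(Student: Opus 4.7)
The plan is to derive the corollary directly from Lemma~\ref{lem:AP-from-doubling}, applied with a value of $\beta$ that realizes the $h$-dimension $d$ of $A$. That lemma supplies an integer $y \in [T, \log(h/T)]$, a centered $d$-dimensional GAP $Q$, and constants $C, C'$ (all depending only on $\beta$) satisfying conditions $1$--$3$ of the lemma. Since $Q$ is centered it contains $0$, so $Q$ is homogeneous; in particular, for every positive integer $s$ the $s$-fold sumset of $Q$ coincides with the dilate of $Q$ by $s$, which lets me pass freely between the two interpretations.

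The natural choice is $P := 2^{-(y+1)}Q$, a $d$-dimensional GAP containing $A$ by condition~$1$. To locate a proper translate of $c_\beta hP$ inside $hA$, I set $s := \lfloor h / (C'' \, 2^{y+1}) \rfloor$ where $C'' := \max(C, C')$; the inequality $h > 1000\, C\, 2^{y+1}$ from condition~$3$ ensures that $s$ is a large positive integer and, in particular, $s \ge h/(2 C'' \, 2^{y+1})$. Condition~$2$ produces a translate $Q + t \subseteq C \cdot 2^{y+1} A$, and taking the $s$-fold sumset --- together with $0 \in A$ and $s C\, 2^{y+1} \le h$, which allows padding with zeros --- yields $sQ + st \subseteq hA$. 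Because $s \le h/(C' \, 2^{y+1})$ and $(h/(C' \, 2^{y+1}))Q$ is proper by condition~$3$, the sub-dilate $sQ$ is also proper. Finally, centeredness of $Q$ means that $sQ \supseteq (c_\beta h / 2^{y+1}) Q = c_\beta h P$ as soon as $s \ge c_\beta h / 2^{y+1}$; choosing $c_\beta := 1/(2C'')$ makes this automatic, and $c_\beta h P$ inherits properness from $sQ$.

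The one place that requires any care --- and which I would regard as the main conceptual (if minor) obstacle --- is reconciling the integer-valued dilation coming from an $s$-fold sumset with the real-valued dilation $c_\beta h/2^{y+1}$ in the statement of the corollary. Homogeneity of $Q$, guaranteed by $0 \in Q$, is precisely what makes this comparison clean, since containment of one centered dilate in another reduces to a scalar inequality on the widths. Everything else is bookkeeping of the constants supplied by Lemma~\ref{lem:AP-from-doubling}, all of which depend only on $\beta$, so the resulting $c_\beta$ does too.
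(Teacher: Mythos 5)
Your argument mirrors the paper's proof almost exactly: both take $P := 2^{-(y+1)}Q$, use condition 2 together with $0\in A$ to place a translate of roughly $\lfloor h/(C\,2^{y+1})\rfloor\,Q$ inside $hA$, and invoke condition 3 both for the lower bound $h > 1000\,C\,2^{y+1}$ and for properness. The proposal is correct and takes essentially the same route as the paper.
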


\begin{proof} 
Let $Q$, $y$, $C$ and $C'$ be as given by applying Lemma~\ref{lem:AP-from-doubling} to $A$. 
Let $P=2^{-(y+1)}Q$. By Lemma \ref{lem:AP-from-doubling}, we have $A\subseteq P$, $C'^{-1}hP=C'^{-1}h2^{-(y+1)}Q$
is proper and $hA \supseteq \lfloor C^{-1}2^{-(y+1)}h\rfloor C2^{y+1}A$ contains a translate
of $\lfloor C^{-1}2^{-(y+1)}h\rfloor Q \supseteq \lfloor C^{-1}h/2\rfloor P$. Note that here we used that $h>1000C 2^{y+1}$, which follows from Item 3 of Lemma \ref{lem:AP-from-doubling}. Thus, $hA$ contains a proper translate of $\lfloor \min(C^{-1},C'^{-1})h/2\rfloor P \supseteq c_{\beta}hP$. 
\end{proof}

The following definition will also be important.

\begin{defn}\label{def:bounding-box}
Let $A$ be a finite set of natural numbers with $0\in A$ and let $d$ be a positive
integer. We define the \textit{$d$-bounding box $P_{d}(A)$} of $A$
to be the $d$-dimensional GAP containing $A$ with the smallest volume (breaking ties arbitrarily).
\end{defn}

With this definition in place, we can sum up the results of this section in the form we generally apply them.

\begin{lem} \label{lem:lower-hA}
Let $A$ be a subset of $[0,n-1]$ with $0\in A$ and let $h$ be a positive integer with $n\le h^{\beta}$. Assume that $n$ is sufficiently large in terms of $\beta$. If $A$ has $h$-dimension $d$, then 
\[
|hA| \gg_{\beta} h^d \textrm{Vol}(P_d(A))\ge h^{d}|P_{d}(A)|.
\]
Furthermore, the $h$-dimension of $A$ is at most $1+\beta$ and there exists a constant $c_\beta>0$ depending only on $\beta$ such that $c_\beta h P_d(A)$ and, consequently, $P_d(A)$ is proper. 
\end{lem}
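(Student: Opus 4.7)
The strategy is to apply Corollary~\ref{cor:AP-doubling} to obtain a $d$-dimensional GAP $P$ containing $A$ for which $hA$ contains a proper translate of $c'_\beta h P$ for some constant $c'_\beta > 0$ depending only on $\beta$, and then transfer the relevant properties from $P$ to the bounding box $P_d(A)$. Inspection of the proof of Lemma~\ref{lem:AP-from-doubling} shows that the GAP $P$ produced there is itself proper (in that construction, $Q$ is $2$-proper and $P = 2^{-(y+1)}Q$). Consequently, combining properness of $c'_\beta h P$ with Lemma~\ref{lem:GAP-size}(2) yields $|c'_\beta h P| \gg_\beta h^d \textrm{Vol}(P)$, provided $h$ is large enough to satisfy the width hypothesis, which follows from the construction and the assumption that $n$ is sufficiently large in terms of $\beta$.

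Since $P_d(A)$ is, by definition, the $d$-dimensional GAP of smallest volume containing $A$ and $P$ is one such GAP, $\textrm{Vol}(P_d(A)) \leq \textrm{Vol}(P)$. Combining,
\[
|hA| \;\geq\; |c'_\beta h P| \;\gg_\beta\; h^d \textrm{Vol}(P) \;\geq\; h^d \textrm{Vol}(P_d(A)) \;\geq\; h^d |P_d(A)|,
\]
which is the main estimate. The bound $d \leq 1 + \beta$ on the $h$-dimension follows from the rearrangement argument in the proof of Lemma~\ref{lem:AP-from-doubling}, which bounds $d$ by a constant depending only on $\beta$ by choosing the least valid exponent in the definition of $h$-dimension.

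For the properness assertion, note first that, since $0 \in A \subseteq P_d(A)$, Claim~\ref{claim:center} implies $P_d(A)$ is centered and therefore homogeneous, so the dilations $sP_d(A)$ are well-defined and nested in $s$. Let $k^*$ be the largest nonnegative integer such that $k^* P_d(A)$ is proper. Assuming first that $k^* \geq 1$, the GAP $Q := k^* P_d(A)$ is proper, while $2Q = 2k^* P_d(A)$ is not proper, since $2k^* > k^*$ implies $2k^* P_d(A) \supseteq (k^*+1)P_d(A)$ and so inherits a non-trivial relation. By Corollary~\ref{cor:nonproper-doub} applied to $Q$, taking $\ell = \lceil h/k^* \rceil$ gives
\[
|hP_d(A)| \;\leq\; |\ell Q| \;\leq\; \ell^{d-1} C_d |Q| \;\ll\; h^{d-1} k^* \textrm{Vol}(P_d(A)).
\]
Combined with $hA \subseteq hP_d(A)$ and the already-established bound $|hA| \gg_\beta h^d \textrm{Vol}(P_d(A))$, this forces $k^* \gg_\beta h$. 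Choosing $c_\beta$ sufficiently small in terms of $\beta$ then yields $c_\beta h \leq k^*$, so that $c_\beta h P_d(A) \subseteq k^* P_d(A)$, which, as a sub-dilation of a proper homogeneous GAP, is itself proper. Properness of $P_d(A)$ then follows, since $P_d(A) = 1 \cdot P_d(A) \subseteq c_\beta h P_d(A)$ once $c_\beta h \geq 1$.

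The main obstacle I anticipate is the boundary case $k^* = 0$, in which $P_d(A)$ itself is non-proper and Corollary~\ref{cor:nonproper-doub} does not directly apply. One can handle this either by iterating Lemma~\ref{lem:nonproper-doubling} to bound high-fold sumsets of non-proper GAPs from above (again contradicting the lower bound on $|hP_d(A)|$), or by arguing that a non-proper $d$-dimensional bounding box could be replaced by a GAP of smaller dimension, contradicting the minimality in the definition of the $h$-dimension; this will require a little care to carry out cleanly.
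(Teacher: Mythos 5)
Your argument for the main estimate $|hA| \gg_\beta h^d \textrm{Vol}(P_d(A))$ and for the properness of $c_\beta h P_d(A)$ follows essentially the same route as the paper (apply Corollary~\ref{cor:AP-doubling}, compare with $P_d(A)$ by minimality of volume, and bound $|hP_d(A)|$ from above using Corollary~\ref{cor:nonproper-doub}). Your reformulation of the properness step in terms of the threshold $k^*$ instead of a direct contradiction is cosmetic but fine when $k^* \ge 1$.

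There is, however, a genuine gap in your justification of $d \le 1+\beta$. The rearrangement argument in the proof of Lemma~\ref{lem:AP-from-doubling} only gives $d \le d' \le 3/2 + \beta$, which for some $\beta$ (e.g.\ $\beta \in (k-1/2,k)$ with $k$ an integer, where $\lfloor 3/2+\beta\rfloor = k+1 > k = \lfloor 1+\beta\rfloor$) is strictly weaker than the claimed $d \le 1+\beta$. The paper instead deduces the sharper bound from the estimate you have already proved: since $\textrm{Vol}(P_d(A)) \ge 1$, the first part gives $h^d \ll_\beta |hA|$, while $hA \subseteq [0,h(n-1)]$ together with $n \le h^\beta$ gives $|hA| \le hn \le h^{1+\beta}$; combining yields $h^{d-1-\beta} \ll_\beta 1$, and since $h \ge n^{1/\beta} \to \infty$, this forces $d \le 1+\beta$ once $n$ is sufficiently large. (The crude rearrangement bound is still used, implicitly, to ensure that the constant hidden in $\ll_\beta$ is uniform over the bounded range of possible $d$.) You have all the ingredients for this but do not put them together.

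On the $k^*=0$ case you flag: you are right that Corollary~\ref{cor:nonproper-doub} needs the base GAP to be proper, and the paper's phrase ``the largest integer $t$ such that $tP_d(A)$ is proper'' tacitly presupposes $t \ge 1$, i.e.\ that $P_d(A)$ is proper. But neither of your two sketched repairs works as stated. Iterating Lemma~\ref{lem:nonproper-doubling} only yields bounds of the shape $|2^jQ| \le (2^d-1)^j\textrm{Vol}(Q)$, which does not beat the trivial $2^{jd}\textrm{Vol}(Q)$ by the extra factor of $2^j$ needed to get $|kQ| \ll_d k^{d-1}\textrm{Vol}(Q)$. And replacing a non-proper bounding box by a lower-dimensional GAP does not contradict the definition of $P_d(A)$ (which minimizes volume among $d$-dimensional GAPs at fixed $d$), nor the definition of $h$-dimension (which is stated in terms of Lemma~\ref{lem:AP-from-doubling}, not in terms of bounding boxes). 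The natural resolution is to show that a minimum-volume $d$-dimensional GAP containing a given set is necessarily proper, i.e.\ that any non-proper $d$-dimensional GAP is contained in a $d$-dimensional GAP of strictly smaller volume; this lemma is omitted from the paper as well, so on this particular point you are no worse off, but you should be aware that your two proposed patches do not close the hole.
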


\begin{proof}
Let $P$ be as in Corollary \ref{cor:AP-doubling}. Then, since $c_\beta hP$ is proper, for $n$ sufficiently large we have that $P$ is proper and  
$|P|=\textrm{Vol}(P)\ge\textrm{Vol}(P_{d}(A))$. Since $hA$ contains a translate of $c_{\beta}hP$ and $c_{\beta}hP$ is proper, Lemma \ref{lem:GAP-size} implies that 
\[
|hA| \gg_{\beta} h^{d}|P| \ge h^{d}\textrm{Vol}(P_d(A))\ge h^{d}|P_{d}(A)|.
\]
Therefore, since $h^d \ll_\beta |hA| \le hn \le h^{1+\beta}$, 
it follows immediately that if $n$ is sufficiently large in terms of $\beta$, then the $h$-dimension of $A$ is at most $1+\beta$. 

Finally, for an appropriate $c_\beta>0$ to be determined later, assume that $c_\beta hP_d(A)$ is not proper. Then, by Corollary \ref{cor:nonproper-doub}, if $t\le c_\beta h$ is the largest integer such that $tP_d(A)$ is proper, then
\[
|hP_d(A)| \le (h/t)^{d-1}C_d|tP_d(A)| \le (h/t)^{d-1} C_d t^d|P_d(A)| \le c_\beta C_d h^{d}\textrm{Vol}(P_d(A)).
\]
On the other hand, $|hP_d(A)|\ge |hA|\gg_\beta h^d \textrm{Vol}(P_d(A))$. This is a contradiction provided $c_\beta$ is  sufficiently small. Thus, we obtain that $c_\beta hP_d(A)$ is proper and, therefore, for $n$ sufficiently large, $P_d(A)$ is also proper. 
\end{proof}

\subsection{Non-proper GAPs}\label{subsec:non-proper}

In this subsection, we show that a homogeneous $d$-dimensional GAP $A$ contains either a proper homogeneous $d$-dimensional GAP whose volume is a constant fraction of the volume of $A$ or a homogeneous GAP whose dimension is at most $d-1$ and size is at least a constant fraction of $|A|$. Thus, by iterating, we arrive at a proper homogeneous GAP which is a subset of $A$ and whose size is at least a constant fraction of $|A|$. 

\begin{lem}
\label{lem:non-proper}
There exist positive constants $C_d,c_d$ depending only on $d$ such that the following holds. Let $A$ be a homogeneous $d$-dimensional
GAP. Then either $A$ contains a proper homogeneous GAP $Q$ of dimension at most $d$ and size at least $c_{d}\textrm{Vol}(A)$
or $A$ contains a homogeneous GAP $Q$ of dimension at most $d-1$ and size at least $c_{d}|A|$. Furthermore, $C_dQ$ contains a translate of $A$ and $\gcd(Q)=\gcd(A)$.
\end{lem}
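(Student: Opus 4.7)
I would proceed by induction on $d$. The base case $d = 1$ is immediate: a one-dimensional homogeneous GAP is automatically proper, so $Q = A$ works with $C_1 = 1$. For the inductive step, write $A = \{\sum_{i=1}^{d} n_i q_i : a_i \le n_i \le b_i\}$ with widths $w_i = b_i - a_i + 1$ and consider the lattice of relations $L = \{v \in \mathbb{Z}^d : \sum_i v_i q_i = 0\}$. If $A$ is proper (i.e., no nonzero $v \in L$ satisfies $|v_i| \le w_i - 1$ for all $i$), take $Q = A$ and we are done. If some width $w_i$ is bounded by a constant $K_d$ depending only on $d$, slice $A$ along that coordinate (after shifting so $0$ lies in the $i$th interval, which the homogeneity of $A$ permits) to obtain a $(d-1)$-dimensional homogeneous sub-GAP $A'$, apply the induction hypothesis to $A'$, and lift the resulting $Q' \subseteq A'$ back to a sub-GAP $Q \subseteq A$ satisfying $\gcd(Q) = \gcd(A)$, either directly when $\gcd(A') = \gcd(A)$ or by appending a short factor involving $q_i$; the containment $C_d Q \supseteq A$ up to translation holds because the lost direction contributes only a bounded multiplicative factor $w_i \le K_d$.

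Assume henceforth that all widths exceed $K_d$ and $A$ is not proper. Pick a primitive $v \in L \setminus \{0\}$ minimizing $\alpha := \max_i |v_i|/w_i \in (0,1)$, and fix a small constant $\epsilon_d > 0$ with $K_d \ge 2/\epsilon_d$. If $\alpha \ge \epsilon_d$, then since all $w_i > K_d$ we have $\alpha w_i > 2$, so the shrunken box of widths $\lfloor \alpha w_i \rfloor \ge 2$ is nontrivial in every direction. By the minimality of $\alpha$, every nonzero $v' \in L$ has some $|v_i'| \ge \alpha w_i \ge \lfloor \alpha w_i \rfloor$, so the defining map $\phi$ is injective on this sub-box. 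This produces a proper homogeneous $d$-dimensional sub-GAP $Q \subseteq A$ with $\textrm{Vol}(Q) \gtrsim \epsilon_d^d\, \textrm{Vol}(A)$, $\gcd(Q) = \gcd(A)$ (since all shrunken widths are $\ge 2$), and $C_d Q \supseteq A$ up to translation for $C_d \gtrsim 1/\epsilon_d$, fulfilling the first alternative.

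If instead $\alpha < \epsilon_d$, then $v$ is a ``very short'' relation. Extend $v$ to a basis $(v, u_2, \ldots, u_d)$ of $\mathbb{Z}^d$; since $\phi(v) = 0$, the map $\phi$ factors as $\phi = \phi' \circ \pi$, where $\pi : \mathbb{Z}^d \to \mathbb{Z}^{d-1}$ is the projection onto the last $d-1$ new coordinates and $\phi'$ is the induced linear map. By choosing $(u_2, \ldots, u_d)$ carefully, for instance via a discrete John-type theorem applied to the polytope $\pi(B)$ where $B = \prod[a_i, b_i]$, one obtains an inscribed parallelepiped in $\pi(B)$ (axis-aligned in the chosen basis) of volume $\gtrsim |\pi(B)|$, whose image under $\phi'$ is a homogeneous $(d-1)$-dimensional sub-GAP $Q \subseteq A$ of size $\gtrsim |A|$, fulfilling the second alternative. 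The $\gcd$ condition $\gcd(Q) = \gcd(A)$ holds because $\phi(u_2), \ldots, \phi(u_d)$ together with $\phi(v) = 0$ generate the image $\gcd(A) \cdot \mathbb{Z}$, and $C_d Q \supseteq A$ up to translation follows from the comparability of widths.

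The principal technical obstacle is the last step: one must choose the basis $(u_2, \ldots, u_d)$ so that the inscribed parallelepiped in $\pi(B)$ has both volume comparable to $|\pi(B)|$ and widths comparable to a bounding box of $\pi(B)$, since both are needed to translate the geometry back through the basis change into the lemma's conclusions on $|Q|$ and $C_d Q \supseteq A$.
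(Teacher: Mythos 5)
Your proposal takes a genuinely different route from the paper's. The paper dyadically shrinks $A$ to a proper GAP $A'$ and then, whenever $2A'$ fails to be proper, invokes Lemma~\ref{lem:non-proper-d-1} --- which rests on Bilu's refinement of Freiman's theorem (Lemma~\ref{lem:bilu}) --- to produce a proper $(d-1)$-dimensional homogeneous GAP of size comparable to $|A'|$, and then expands back up. You instead argue directly in the relation lattice $L = \ker\phi$, minimizing the scaled quantity $\alpha = \max_i |v_i|/w_i$ over primitive nonzero $v \in L$ and splitting on whether this minimum is large (extract a proportional proper sub-box, whose properness follows immediately from minimality) or small (pass to the quotient by the short primitive relation $v$ and apply the Tao--Vu discrete John theorem, Lemma~\ref{lem:TV-convexbody}). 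This is more geometric and avoids Freiman-type structure theory at the cost of introducing discrete John; your large-$\alpha$ case is clean and correct.

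That said, two quantitative steps are asserted but not justified, and neither is the obstacle you flag at the end. In the short-relation case, once discrete John gives a proper GAP $Q$ with $Q \subseteq \pi(B) \cap \mathbb{Z}^{d-1} \subseteq CQ$ (up to translation), the bounding-box comparability you worry about comes for free from the right-hand inclusion; the genuine issue is passing from $A \subseteq C\phi'(Q)$ (up to translation) to $|\phi'(Q)| \ge c_d|A|$, since $\phi'$ may still collapse $Q$ substantially. This does follow, but only via the covering estimate $|sR| \le s^{\dim R}|R|$ for an \emph{arbitrary, possibly non-proper} GAP $R$ (cover $[0,s(w_i-1)]$ by $s$ translates of $[0,w_i-1]$ in each coordinate and push forward); note that Lemma~\ref{lem:GAP-size} assumes $sR$ proper and so cannot be cited as stated. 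You should also observe that the zonotope $\pi(B)$ is centrally symmetric, to legitimately apply Lemma~\ref{lem:TV-convexbody}, possibly after an integer translate (compare the device in the proof of Lemma~\ref{lem:build-GAP}). Separately, in the bounded-width slicing step, taking $Q = Q'$ ``directly when $\gcd(A') = \gcd(A)$'' is not sufficient: matching gcds do not guarantee that $q_i$ has a bounded representation in $Q' - Q'$, so $I_i q_i$ need not lie in any bounded dilate of $Q'$, and $C_d Q' \supseteq A$ can fail. The correct split is on whether $q_i \in Q' - Q'$: if so, $I_iq_i \subseteq K_d(Q'-Q')$ handles the containment; if not, append $\{0,q_i\}$, and check that properness is preserved when the induction hypothesis returned a proper $Q'$ (it is, precisely because $q_i \notin Q' - Q'$). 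These gaps are fillable, but they are where the content of the lemma lives --- compare the several sub-cases in the paper's Case~2, which address exactly this lifting.
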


\begin{proof}
We prove the result by induction on $d$, first noting that there is nothing to prove when $d = 1$. Suppose now that the result is true for $(d-1)$-dimensional GAPs and we would like to prove it for $d$-dimensional ones.

We first claim that, without loss of generality, we may assume that $0\in A$. Indeed, let $a\in A$, so that $0\in A-a$. Assume that $A-a$ contains a proper homogeneous GAP $Q$ of dimension at most $d$ and size at least $c_d\textrm{Vol}(A-a)=c_d\textrm{Vol}(A)$ or $A-a$ contains a homogeneous GAP $Q$ of dimension at most $d-1$ and size at least $c_d|A|$, where $C_dQ$ contains a translate of $A-a$ and $\gcd(Q)=\gcd(A-a)=\gcd(A)$. Since $A$ is homogeneous, $\gcd(A)|a$ and, hence, since  $\gcd(Q)=\gcd(A)$, the GAP $Q+a$ is also homogeneous. Thus, $Q+a$ satisfies all of the required properties.

Let $C_{d}',c_d'$ be the constants in Lemma \ref{lem:non-proper-d-1}. Let $A=\{\sum_{j=1}^{d} x_j q_j\,\,:\,\, x_j \in I_j\}$, where $I_j$ is an interval of length $w_j$. Let $h_0$ be the smallest positive integer such that $2^{-h_0}A$ has one of its widths smaller than $2C_{d}'$. We then let $h$ be the smallest positive integer, if it exists, which is at most $h_0$ and such that $2^{-h}A$ is proper, but $2^{-h+1}A$ is not. Otherwise, we set $h=h_0$. Note that $2^{-h}A$ has all widths at least $C_d'$. 
Let $A'=2^{-h}A$. We have that either $A'$ is proper or its minimum width is less than $2C_d'$.  

\vspace{2mm}
\noindent \textbf{Case 1:} $A'$ is proper and the widths of $A'$ are all at least $C_d'$. 

\vspace{2mm} \noindent
If $2^h \le C_d'$, then $A'$ has the required properties. Otherwise, assume that $2^h>C_d'$. By Lemma~\ref{lem:non-proper-d-1}, 
$C_d'A'$ contains a proper homogeneous $(d-1)$-dimensional GAP $B$ of size at least $c_d' |A'|$, where a translate of $A'$ is contained in $C_d'B$ and $\gcd(B) = \gcd(A') = \gcd(A)$. We have that $A\supseteq 2^h A' \supseteq \lfloor2^h C_d'^{-1}\rfloor C_d'A'$ contains the homogeneous $(d-1)$-dimensional GAP $2^{h-1}C_d'^{-1}B$. 
Furthermore, as $A'=2^{-h}A$, the set $A$ is contained in a translate of $2^{h+2}A'$ by Lemma \ref{lem:cGAP}, so $A$ is contained in a translate of $2^{h+2}C_d'B$. 
Thus, we have $|C_d'^{-1}2^{h-1}B| \gg_{d} |2^{h+2} C_d'B| \gg_{d} |A|$. By the induction hypothesis and the discussion preceding the lemma, 
$C_d'^{-1}2^{h-1}B$ contains a proper homogeneous GAP $Q$ of dimension at most $d-1$ with $|Q| \gg_d |C_d'^{-1}2^{h-1}B| \gg_d |A|$. Furthermore, $\gcd(Q)=\gcd(B)=\gcd(A)$ and, for $C_d$ sufficiently large, $C_dQ$ contains a translate of $2^{h+2}C_d'B$ and, hence, $A$.  

\vspace{2mm}
\noindent \textbf{Case 2:} The minimum width of $A'$ is less than $2C_d'$ and at least $C_d'$.

\vspace{2mm} \noindent
Let $A'=2^{-h}A=\{\sum_{i=1}^{d} n_iq_i:n_i \in [2^{-h}a_i,2^{-h}b_i]\}$. 
Without loss of generality, we may assume that $A'$ is not proper and that $C_d'\le 2^{-h}(b_1-a_1+1) < 2C_d'$. Let $B'= \{\sum_{i=2}^{d} n_iq_i:n_i \in [2^{-h}a_i,2^{-h}b_i]\}$. By the induction hypothesis, 
either $B'$ contains a proper homogeneous GAP $Q'$ of dimension at most $d-1$ and size at least $c_{d-1}\textrm{Vol}(B')$ or a proper homogeneous GAP $Q'$ of dimension at most $d-2$ and size at least $c_{d-1}|B'|$, where $\gcd(Q')=\gcd(B')$ and $C_{d-1}Q'$ contains a translate of $B'$. 

Let $A^*=Q'+[2^{-h}a_1,2^{-h}b_1]q_1$. Note that $A^*\subseteq A'$ and $C_{d-1}A^*$ contains a translate of $A'$. 
If $A^*$ is proper, then, since $C_{d-1}A^*$ is not proper, there exists $z\le C_{d-1}$ such that $zA^*$ is proper and $2zA^*$ is not proper. Let $\tilde{A} = zA^*$ and note that $\tilde{A}$ is homogeneous. By Lemma \ref{lem:non-proper-d-1}, $C_d'\tilde{A}$ contains a proper homogeneous $(d-1)$-dimensional GAP $B$ of size at least $c_d' |\tilde{A}|$, where $\tilde{A}$ is contained in a translate of $C_d'B$ and $\gcd(B)=\gcd(\tilde{A})=\gcd(A)$. We have that $A$ contains $2^h A^* \supseteq \lfloor 2^hz^{-1}\rfloor z\tilde{A} \supseteq 2^{h-1}C_{d-1}^{-1}\tilde{A}$, where we used that $z\le C_{d-1}$. Thus, $A$ contains the homogeneous $(d-1)$-dimensional GAP $(C_{d-1}C_d')^{-1}2^{h-1}B$. 
Furthermore, $A$ is contained in a translate of $2^{h+2}A'$ and, hence, in a translate of $2^{h+2}C_{d-1}A^*$ and $\tilde{A}=zA^*$ is contained in a translate of $C_d'B$, so $A$ is contained in a translate of  $C_{d-1}C_d'2^{h+2}B$. Thus, we have $|(C_{d-1}C_d')^{-1}2^{h-1}B| \gg_{d} |C_{d-1}C_d'2^{h+2} B| \gg_{d} |A|$. By the induction hypothesis and the remark preceding the lemma, $(C_{d-1}C_d')^{-1}2^{h-1}B$ contains a proper homogeneous GAP $Q$ of dimension at most $d-1$ with $|Q| \gg_d  |(C_{d-1}C_d')^{-1}2^{h-1}B| \gg_d |A|$ and $\gcd(Q)=\gcd(B)=\gcd(A)$. Furthermore, for $C_d$ sufficiently large, $C_dQ$ contains a translate of $C_{d-1}C_d'2^{h+2}B$ and, hence, $A$.  

Next, assume that $A^*$ is not proper. Since $Q'$ is proper, if $A^*$ is not proper, then there exists $q_1',q_2'\in Q'$ and $x_1,x_2\in [2^{-h}a_1,2^{-h}b_1]$ such that $q_1'+x_1q_1=q_2'+x_2q_1$. Thus, recalling that $2^{-h}(b_1-a_1)<2C_d'$, there is $\alpha\in [1,2C_d']$ such that $\alpha q_1 \in Q'-Q'$. Note that $A$ is contained in a translate of $2^{h+2}A'$ by Lemma \ref{lem:cGAP} and each element of $2^{h+2}(A'-\lceil 2^{-h}a_1\rceil q_1)$ can be written as the sum of an element of $2^{h+2}B'$ and $xq_1$ for $x\le 2^{h+2}(\lfloor 2^{-h}b_1\rfloor - \lceil 2^{-h}a_1\rceil)$. Since $\alpha q_1\in Q'-Q'$ for some $\alpha\in [1,2C_d']$, we can write $xq_1$ as the sum of an element in $[2C_d']q_1$ and an element in $2^{h+2}(\lfloor 2^{-h}b_1\rfloor - \lceil 2^{-h}a_1\rceil)(Q'-Q')\subseteq 2^{h+3}C_d'(Q'-Q')$.
Therefore, $A$ is contained in a translate of $[2C_d']q_1 + 2^{h+3}C_d'(Q'-Q') + 2^{h+2}B' \subseteq [2C_d']q_1 + 2^{h+4}C_d'(B'-B')$ 
and, in particular, $|2^{h}B'| \gg_d |A|$. 
By the induction hypothesis, either $2^{h}B'$ contains a proper homogeneous GAP $Q''$ of dimension at most $d-1$ and size at least $c_{d-1}\textrm{Vol}(2^{h}B')$ or a proper homogeneous GAP $Q''$ of dimension at most $d-2$ and size at least $c_{d-1}|2^{h}B'|$, where $\gcd(Q'')=\gcd(B')$ and $C_{d-1}Q''$ contains a translate of $2^{h}B'$. Let $Q = Q'' + \{0,1\}q_1$. 

If $Q$ is proper, then, since $A$ contains $2^{h}B'$, $A$ contains a proper translate of $Q$ of size at least $|Q| \gg_d |A|$. Note that for a GAP $B'$, we have that $-B'$ is a translate of $B'$, so $B'-B'$ is a translate of $2B'$. Hence, for $C_d \ge 64C_d'C_{d-1}$, we have that $C_dQ$ contains a translate of $[2C_d']q_1+2^{h+5}C_d'B'$, which contains a translate of $[2C_d']q_1+2^{h+4}C_d'(B'-B')$, 
which further contains a translate of $A$. We also have $\gcd(Q)=\gcd(q_1,\gcd(Q'')) = \gcd(q_1,B')=\gcd(A)$. Thus, $Q$ has the required properties. 

On the other hand, if $Q$ is not proper, then $q_1 \in Q''-Q''$ and, hence, as $A$ is contained in a translate of $[2C_d']q_1+2^{h+4}C_d'(B'-B')$, we have that 
$A$ is contained in a translate of $$(2C_d')(Q''-Q'')+2^{h+4}C_d'(B'-B')\subseteq ((2C_d')2^{h}+C_d'2^{h+4})(B'-B')\subseteq 2^{h+5}C_d'(B'-B'),$$ which is contained in a translate of $2^{h+6}C_d'B'$. Thus, recalling that $C_{d-1}Q''$ contains a translate of $2^{h}B'$, $A$ is contained in a translate of $64C_{d-1}C_d'Q'' \subseteq C_d Q''$. Furthermore, $\gcd(Q'')=\gcd(q_1,\gcd(Q'')) = \gcd(q_1,\gcd(B'))=\gcd(A)$, where we used that $q_1 \in Q''-Q''$. Hence, in this case, $Q''$ itself has the required properties. 
\end{proof}

\subsection{Stability under random sampling}\label{subsec:stability}

In this subsection, we define some notions of stability for subsets $A$ of $[0,n-1]$ and show that these properties are preserved for large random subsets of $A$. We will repeatedly use the fact that, by Lemma \ref{lem:lower-hA}, the $h$-dimension of a subset $A$ of $[0,n-1]$ with $0\in A$ for $h\ge n^{1/\beta}$ is bounded by $1+\beta$ when $n$ is sufficiently large in terms of $\beta$. 

\begin{defn} 
Let $x,\beta>1$ and let $A$ be a finite set of natural numbers with $0\in A$. For each positive integer $d$, let $P_{d}(A)$ be the $d$-bounding box of $A$. 
We say that $A$ is \textit{weakly-$(x,\beta)$-stable} if, 
for any $A'\subset A$ with $|A'|\ge|A|-x$ and $0\in A'$, we have that, for all $d\le 1+\beta$ and every GAP $P$ of dimension $d$ with differences at most $n^2$ and volume at most $\frac{3}{4}\textrm{Vol}(P_d(A))$, $A'$ is not contained in $P$.  
 When $0\notin A$, we say that $A$ is \textit{weakly-$(x,\beta)$-stable} if $A\cup \{0\}$ is. 
\end{defn}

The following observation will be important below.

\begin{lem}\label{lem:bound-on-box}
Let $\beta>1$, let $S$ be a subset of $[0,n-1]$ with $0\in S$ and, for $h \in [n^{1/\beta}, n]$, let $d$ be the $h$-dimension of $S$. Then, for $n$ sufficiently large in terms of $\beta$, the $d$-bounding box $P_d(S)$ has differences bounded above by $n^2$. 
\end{lem}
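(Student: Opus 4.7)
The plan is to combine Corollary~\ref{cor:AP-doubling} with the constraint $S\subseteq [0,n-1]$ via a diameter argument. By Lemma~\ref{lem:lower-hA}, for $n$ sufficiently large in terms of $\beta$, the $h$-dimension $d$ of $S$ is at most $1+\beta$ and both $P_d(S)$ and $c_\beta h P_d(S)$ are proper for some constant $c_\beta>0$ depending only on $\beta$. Applying Corollary~\ref{cor:AP-doubling} yields a $d$-dimensional GAP $P$ containing $S$ such that $hS$ contains a proper translate of $c_\beta h P$. Since $0\in S\subseteq P$, Claim~\ref{claim:center} lets me write $P$ homogeneously as $P=\{\sum_{i=1}^{d}n_iq_i:a_i\le n_i\le b_i\}$ with $a_i\le 0\le b_i$.

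The key step is a diameter argument. Because $S\subseteq [0,n-1]$, $hS\subseteq [0,h(n-1)]$, so the translate of $c_\beta h P$ fits in an interval of length at most $hn$. On the other hand, $c_\beta h P=\{\sum_i n_iq_i:c_\beta h a_i\le n_i\le c_\beta h b_i\}$ has diameter at least $(c_\beta h(b_i-a_i)-2)|q_i|$ in each direction $i$, since $\lfloor c_\beta h b_i\rfloor-\lceil c_\beta h a_i\rceil \ge c_\beta h(b_i-a_i)-2$. Combining, for every $i$ with $b_i>a_i$,
\[
|q_i|\le \frac{hn}{c_\beta h-2}\le \frac{2n}{c_\beta},
\]
which is at most $n^2$ for $n$ sufficiently large. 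For $i$ with $a_i=b_i=0$, $q_i$ does not appear in any element of $P$, so one may replace $q_i$ by $0$.

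It remains to transfer this bound to $P_d(S)$. We have $\textrm{Vol}(P)\ge \textrm{Vol}(P_d(S))$; if equality holds, then $P$ is itself a valid choice for $P_d(S)$ under the ``ties broken arbitrarily'' convention. Otherwise, I would reduce $P$ toward the minimum volume by iterative gcd-rescaling: whenever $g_i:=\gcd(N_i-N_i)>1$ for some $i$ with $N_i=\{n_i(s):s\in S\}$, replace $q_i$ by $g_iq_i$ and $w_i$ by $\lceil w_i/g_i\rceil$, which strictly decreases the volume while preserving the containment of $S$. The diameter argument really establishes the scale-invariant bound $|q_i|\le n/(c_\beta w_i)$, so this rescaling preserves the bound $|q_i|\le n^2$.

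The main obstacle is certifying that such reductions actually reach the true minimum volume, rather than a non-optimal local minimum with respect to gcd-rescaling alone; if necessary, one bypasses this by choosing any min-volume representative of $P_d(S)$ outright and applying a Minkowski-style basis reduction that uses $S-S\subseteq [-(n-1),n-1]$ to produce short generators.
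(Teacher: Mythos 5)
The paper argues directly about $P_d(S)$ itself: assuming $q_1>n^2$, one observes $hS\subseteq hP_d(S)\cap[0,h(n-1)]$, that for $h\le n$ and $q_1>n^2>h(n-1)$ each fixed $(n_2,\dots,n_d)$ allows at most one $n_1$ landing in $[0,h(n-1)]$, hence $|hS|\le h^{d-1}\mathrm{Vol}(P_d(S))$, and this contradicts $|hS|\gg_\beta h^d\mathrm{Vol}(P_d(S))$ from Lemma~\ref{lem:lower-hA} once $h\ge n^{1/\beta}$ is large. No auxiliary GAP and no transfer step is needed.

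Your proposal instead bounds the differences of the GAP $P$ produced by Corollary~\ref{cor:AP-doubling} (via the diameter of the translate of $c_\beta hP$ sitting inside $[0,h(n-1)]$), and then tries to pass that bound to $P_d(S)$. That transfer is the genuine gap. The GAP $P$ from Corollary~\ref{cor:AP-doubling} and the bounding box $P_d(S)$ are a priori different objects, related only by $\mathrm{Vol}(P)\ge\mathrm{Vol}(P_d(S))$; there is no reason the differences of the minimizer resemble those of $P$. The gcd-rescaling heuristic is not guaranteed to terminate at $P_d(S)$ (there is no canonical reduction procedure that reaches the global volume minimizer from an arbitrary containing GAP), and the ``Minkowski-style basis reduction'' fallback is asserted without any argument. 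Moreover, the diameter argument itself cannot be applied directly to $P_d(S)$, since Lemma~\ref{lem:lower-hA} gives that $c_\beta hP_d(S)$ is proper but does not put a translate of $c_\beta hP_d(S)$ inside $hS$. What you do have from Lemma~\ref{lem:lower-hA} is the cardinality lower bound $|hS|\gg_\beta h^d\mathrm{Vol}(P_d(S))$, and it is precisely that bound — combined with the trivial upper bound on $|hP_d(S)\cap[0,h(n-1)]|$ when a difference exceeds $h(n-1)$ — that closes the argument. You should replace the transfer step with this direct counting argument on $P_d(S)$.
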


\begin{proof}
By Lemma \ref{lem:lower-hA}, we have $|hS|\gg_{\beta} h^d\textrm{Vol}(P_d(S))$. Assume, for the sake of contradiction, that $P_d(S)=\{\sum_{i=1}^{d}n_iq_i:n_i \in [a_i,b_i]\}$ and $q_1 > n^2$. Note that $hS\subseteq hP_d(S)\cap [0,h(n-1)]$. Furthermore, for each fixed $n_2,\dots,n_d$, there is at most one integer $n_1$ for which $n_1q_1+\sum_{i=2}^{d}n_iq_i \in [0,h(n-1)]$. Hence, $$|hP_d(S)\cap [0,h(n-1)]|\le \prod_{i=2}^{d}(b_i-a_i+1) \cdot h^{d-1} \le h^{d-1}\textrm{Vol}(P_d(S))/(b_1-a_1+1).$$
However, this contradicts the bound $|hP_d(S)\cap [0,h(n-1)]| \ge |hS| \gg_{\beta} h^d\textrm{Vol}(P_d(S))$ for $n$  sufficiently large. 
\end{proof} 

\begin{cor}\label{cor:weakly-stable}
Let $\beta>1$ and let $A$ be a weakly-$(x,\beta)$-stable subset of $[0,n-1]$. Then, for any subset $A'$ of $A$ of size at least $|A|-x$, any $h\in [n^{1/\beta},n]$ and $n$ sufficiently large in terms of $\beta$,  $\textrm{Vol}(P_d(A'\cup \{0\}))\ge \frac{3}{4}\textrm{Vol}(P_d(A))$,  where $d$ is the $h$-dimension of $A'\cup \{0\}$.
\end{cor}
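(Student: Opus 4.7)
The plan is to derive this as a direct consequence of the definition of weakly-$(x,\beta)$-stable, with the two preceding lemmas handling the two side conditions that appear in that definition. Without loss of generality I assume $0 \in A$; otherwise, by the convention in the definition of weakly stable, the hypothesis on $A$ is really a hypothesis on $A \cup \{0\}$ and I argue with that set instead. Set $A'' := A' \cup \{0\} \subseteq A$, so $0 \in A''$ and $|A''| \ge |A'| \ge |A|-x$.

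Let $d$ denote the $h$-dimension of $A''$. The key preparatory step is to verify that the $d$-bounding box $P_d(A'')$ satisfies the two side hypotheses required to apply the weak stability property. From Lemma~\ref{lem:lower-hA}, applied with the given $\beta$ (using $n \le h^{\beta}$ since $h \ge n^{1/\beta}$), I get $d \le 1+\beta$ provided $n$ is large enough in $\beta$. From Lemma~\ref{lem:bound-on-box} applied to $S = A''$ with the same $h \in [n^{1/\beta}, n]$, I get that the differences of $P_d(A'')$ are bounded above by $n^2$, again for $n$ sufficiently large.

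Then I finish by contradiction. If $\textrm{Vol}(P_d(A'')) < \tfrac{3}{4}\textrm{Vol}(P_d(A))$, then $P := P_d(A'')$ is a GAP of dimension $d \le 1+\beta$ with differences at most $n^2$ and volume at most $\tfrac{3}{4}\textrm{Vol}(P_d(A))$. Since $A'' \subseteq A$ with $0 \in A''$ and $|A''| \ge |A|-x$, the weakly-$(x,\beta)$-stable property applied to $A''$ forbids the inclusion $A'' \subseteq P$, contradicting the defining property $A'' \subseteq P_d(A'')$ of the bounding box. This yields the desired inequality $\textrm{Vol}(P_d(A' \cup \{0\})) \ge \tfrac{3}{4}\textrm{Vol}(P_d(A))$.

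I do not anticipate any real obstacle in carrying this out: the corollary is essentially definition-chasing, and the two non-trivial ingredients, namely that the $h$-dimension is bounded by $1+\beta$ and that the differences of the bounding box are bounded by $n^2$, are already supplied in the form I need by Lemmas~\ref{lem:lower-hA} and~\ref{lem:bound-on-box} respectively.
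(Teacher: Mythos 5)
Your proof is correct and is essentially identical to the paper's: both invoke Lemma~\ref{lem:lower-hA} for the bound $d\le 1+\beta$, Lemma~\ref{lem:bound-on-box} for the bound on the differences of $P_d(A'\cup\{0\})$, and then conclude directly from the definition of weak stability that $P_d(A'\cup\{0\})$ cannot have volume below $\tfrac{3}{4}\textrm{Vol}(P_d(A))$.
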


\begin{proof}
Let $d$ be the $h$-dimension of $A'\cup \{0\}$, which is at most $1+\beta$ by Lemma \ref{lem:lower-hA}. By Lemma~\ref{lem:bound-on-box}, $P_d(A'\cup \{0\})$ has differences at most $n^2$. Since $A$ is weakly-$(x,\beta)$-stable, there is no GAP of dimension $d$ with differences at most $n^2$ and volume at most $\frac{3}{4}\textrm{Vol}(P_d(A))$ such that $P$ contains $A''$ for a subset $A''$ of $A \cup \{0\}$ with $|A''|\ge |A|-x$ and $0\in A''$. Hence, $\textrm{Vol}(P_d(A'\cup \{0\}))\ge \frac{3}{4}\textrm{Vol}(P_d(A))$.  
\end{proof}

The following lemma gives a useful property of weakly-$(x,\beta)$-stable sets. 

\begin{lem}
\label{lem:h-subset}
There is a constant $c_\beta>0$ such that if $A$ is a weakly-$(x,\beta)$-stable subset of $[0,n-1]$, then, for any subset $A'$ of $A$ of size at least $|A|-x$, any $h\in [n^{1/\beta},n]$ and $n$ sufficiently large in terms of $\beta$, 
\[
|h(A'\cup\{0\})| \ge c_{\beta} |hA|.
\]
\end{lem}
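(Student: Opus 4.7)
The plan is to use the notion of $h$-dimension as a bridge: the $h$-dimension of $A'\cup\{0\}$ comes with a matching lower bound on $|h(A'\cup\{0\})|$ via Lemma \ref{lem:lower-hA}, and stability lets me relate the bounding box of $A'\cup\{0\}$ to that of $A$ at the same dimension, so that the trivial upper bound on $|hA|$ in terms of that bounding box is comparable.

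First, I would reduce to the case $0\in A$. When $0\notin A$, the weakly-$(x,\beta)$-stable hypothesis is defined in terms of $A\cup\{0\}$ anyway, we have $|hA|\le |h(A\cup\{0\})|$, and $|A'\cup\{0\}|\ge |A\cup\{0\}|-x$; so it is enough to prove the lemma with $A$ replaced by $A\cup\{0\}$. Hence assume $0\in A$ and set $\tilde A':=A'\cup\{0\}\subseteq A$, noting $|\tilde A'|\ge |A|-x$ and $0\in\tilde A'$.

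Next, let $d$ be the $h$-dimension of $\tilde A'$. By Lemma \ref{lem:lower-hA}, for $n$ sufficiently large in terms of $\beta$ we have $d\le 1+\beta$ and
\[
|h\tilde A'|\gg_{\beta} h^{d}\,\mathrm{Vol}(P_{d}(\tilde A')).
\]
Since $A$ is weakly-$(x,\beta)$-stable and $|\tilde A'|\ge |A|-x$, Corollary \ref{cor:weakly-stable} gives
\[
\mathrm{Vol}(P_{d}(\tilde A'))\ge \tfrac{3}{4}\mathrm{Vol}(P_{d}(A)),
\]
so
\[
|h\tilde A'|\gg_{\beta} h^{d}\,\mathrm{Vol}(P_{d}(A)).
\]
On the other side, I need only the elementary upper bound: $A\subseteq P_{d}(A)$ implies $hA\subseteq hP_{d}(A)$, and for any $d$-dimensional GAP $P$ with widths $w_{1},\dots,w_{d}$ the $h$-fold sumset satisfies $|hP|\le \prod_{i=1}^{d}(h(w_{i}-1)+1)\le h^{d}\mathrm{Vol}(P)$ by counting representations. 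Hence
\[
|hA|\le h^{d}\,\mathrm{Vol}(P_{d}(A)),
\]
and combining the two displays yields $|h\tilde A'|\gg_{\beta}|hA|$, which is the desired inequality.

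The only delicate point is the choice of dimension on both sides: one must apply the lower bound of Lemma \ref{lem:lower-hA} using the $h$-dimension $d$ of $\tilde A'$ (not of $A$, which could in principle be different), and then compare to $|hA|$ through the $d$-bounding box of $A$, which is not necessarily the one that realises the $h$-dimension of $A$. This is precisely what Corollary \ref{cor:weakly-stable} is designed to handle, and it is what makes the stability hypothesis the main input; beyond that invocation, the remaining calculation is immediate.
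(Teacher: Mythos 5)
Your proof is correct and follows essentially the same route as the paper: invoke Lemma \ref{lem:lower-hA} at the $h$-dimension $d$ of $A'\cup\{0\}$ to get a lower bound $|h(A'\cup\{0\})|\gg_\beta h^d\mathrm{Vol}(P_d(A'\cup\{0\}))$, use Corollary \ref{cor:weakly-stable} to pass from $\mathrm{Vol}(P_d(A'\cup\{0\}))$ to $\mathrm{Vol}(P_d(A))$, and then compare with the trivial upper bound $|hA|\le h^d\mathrm{Vol}(P_d(A))$. The only addition is your explicit reduction to $0\in A$, which the paper leaves implicit.
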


\begin{proof}
Let $d$ be the $h$-dimension of $A'\cup \{0\}$, which is at most $1+\beta$ by Lemma \ref{lem:lower-hA}. By the same lemma,
we have that, for some constant $c>0$ depending only on $\beta$, 
\[
|h(A'\cup\{0\})|\ge ch^{d}\textrm{Vol}(P_{d}(A'\cup\{0\}))\ge\frac{c}{2}h^{d}\textrm{Vol}(P_{d}(A))\ge\frac{c}{2}|hA|,
\]
where we used that $hA\subseteq hP_d(A)$ and, since $A$ is weakly $(x,\beta)$-stable, $\textrm{Vol}(P_d(A'\cup \{0\}))\ge \frac{3}{4}\textrm{Vol}(P_d(A))$ by Corollary \ref{cor:weakly-stable}.
\end{proof}

For a positive integer $d$, let $\phi_d$ be the identification map $\phi_d:P_d(A)\to \mathbb{Z}^{d}$. Weak stability tells us that the bounding box of any large subset $A'$ of $A$ is close in size to the bounding box of $A$, but later we will also need to control the subgroup $\langle \phi_d(A')\rangle$ spanned by $A'$ in $\mathbb{Z}^{d}$. 
Given a subset $A$ of $[0,n-1]$,  
let $\mathcal{D}_A$ be the set of $d$ for which there exists $h\in [n^{1/\beta},n]$ such that $d$ is the $h$-dimension of $A \cup \{0\}$. 
The next lemma shows that a weakly stable set $A$ contains a large subset $A'$ such that any large subset $A''$ of $A'$ spans the same subgroup of $\langle\phi_d(P_d(A))\rangle$ as $A'$ for all $d\in \mathcal{D}_A$. 

\begin{lem}\label{lem:stable-implies-span}
For any $\beta>1$, there exists $C_0\ge 1$ depending only on $\beta$ such that the following holds for $n$ sufficiently large in terms of $\beta$. Assume that $A$ is a weakly-$(x,\beta)$-stable subset of $[0,n-1]$. 
For each $d\in \mathcal{D}_A$, let $\phi_d$ be the identification map $\phi_d:P_d(A)\to \mathbb{Z}^{d}$. Then there exists a subset $A'$ of $A \cup \{0\}$ with $0\in A'$ and $|A'| \ge |A|-x/2$ such that, for all $d\in \mathcal{D}_A$ and any subset $A''$ of $A'$ with $0\in A''$ and $|A''|\ge |A'|-x/C_0$, $\langle \phi_d(A'')\rangle = \langle \phi_d(A')\rangle$. 
\end{lem}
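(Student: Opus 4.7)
The plan is to construct $A'$ by iteratively shrinking $A \cup \{0\}$ until the desired span-stability property holds. I set $A_0 = A \cup \{0\}$, and at each step $i$, I check whether there exists $d \in \mathcal{D}_A$ and a subset $A_{i+1} \subseteq A_i$ with $0 \in A_{i+1}$, $|A_{i+1}| \geq |A_i| - x/C_0$, and $\langle \phi_d(A_{i+1})\rangle \subsetneq \langle \phi_d(A_i)\rangle$; if so, I pass to such an $A_{i+1}$ and iterate, otherwise I set $A' = A_i$ and stop. When the iteration halts, the stopping condition gives exactly the desired property. Since each iteration removes at most $x/C_0$ elements, it suffices to bound the number of iterations $I$ by $C_0/2$ for a suitable $C_0 = C_0(\beta)$.

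To bound $I$, fix $d \in \mathcal{D}_A$ and track the descending chain $G_d^{(i)} := \langle \phi_d(A_i)\rangle$ of subgroups of $\mathbb{Z}^d$. Each strict shrinkage at dimension $d$ either reduces the rank of $G_d^{(i)}$ (which, starting at rank at most $d \leq 1+\beta$, can happen at most $d$ times) or, with rank preserved, at least doubles the covolume $\det(G_d^{(i)})$ measured in $\mathrm{span}_\mathbb{R}(G_d^{(i)})$. I will show that the covolume remains bounded by a constant $\Delta_\beta$ throughout, so the total number of strict shrinkages at dimension $d$ is at most $d + d \log_2 \Delta_\beta = O_\beta(1)$. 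Summing over $d \in \mathcal{D}_A$ (with $|\mathcal{D}_A| \leq 1+\beta$ by Lemma~\ref{lem:lower-hA}) gives $I = O_\beta(1)$, and I take $C_0 = 2I + 2$.

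The key technical step is to show that if $H = G_d^{(i)}$ has rank $r$ and covolume $\Delta$, then $A_i$ is contained in an $r$-dimensional GAP of differences at most $n^2$ and volume $O_\beta(\textrm{Vol}(P_d(A))/\Delta)$. Given this, since $|A_i| \geq |A| - x$ throughout, weak-$(x,\beta)$-stability applied at dimension $r$ combined with $\textrm{Vol}(P_r(A)) \geq \textrm{Vol}(P_d(A))$ (since decreasing the allowed dimension cannot decrease the minimum volume of a containing GAP) forces $\Delta = O_\beta(1)$. To construct the containing GAP, write $P_d(A) = \{x_0 + \sum_j n_j q_j : 0 \le n_j \le w_j - 1\}$ and apply Minkowski's second theorem to $H$ inside the symmetric convex body $K = \prod_j [-w_j, w_j]$ of volume $2^d \textrm{Vol}(P_d(A))$; this yields a basis $b_1, \ldots, b_r$ of $H$ with $b_i \in iK \subseteq dK$. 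Pulled back to $\mathbb{Z}$, the differences $\phi_d^{-1}(b_i) - x_0 = \sum_j b_i^{(j)} q_j$ are bounded by $d \sum_j w_j q_j \leq 2dn \leq n^2$ for $n$ sufficiently large, using $P_d(A) \subseteq [0, n-1]$ to get $\sum_j w_j q_j \leq 2n$. The widths of the GAP are chosen to cover all representations of $\phi_d(A_i) \subseteq H \cap \phi_d(P_d(A))$ in this basis, and their product is $O_d(|H \cap \phi_d(P_d(A))|) = O_d(\textrm{Vol}(P_d(A))/\Delta)$ by the reducedness of the basis.

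The main obstacle is the lattice basis reduction: the Minkowski-reduced basis must simultaneously deliver differences bounded by $n^2$ (via the geometry of $K$ together with Lemma~\ref{lem:bound-on-box}) and a GAP of the claimed small volume. The rank-drop case, where Minkowski's theorem must be applied inside $\mathrm{span}_\mathbb{R}(H) \subsetneq \mathbb{R}^d$ rather than all of $\mathbb{R}^d$, requires further care in controlling the GAP differences and in interfacing with weak stability at the lower dimension $r$ through the inequality $\textrm{Vol}(P_r(A)) \geq \textrm{Vol}(P_d(A))$.
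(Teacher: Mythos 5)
Your high-level iteration scheme is identical to the paper's, and the key structural observation (a strict span-shrinkage either drops the rank or doubles the covolume, so it suffices to bound the covolume by $O_\beta(1)$) is sound. But your route to that bound is genuinely different: you try to build an explicit $r$-dimensional GAP containing $A_i$ via Minkowski's second theorem and then invoke weak stability directly against that GAP, whereas the paper avoids constructing a GAP and instead compares lattice-point counts in $H\cap h\phi_d(P_d(A))$ against the lower bound on $|hA'_i|$ coming from Lemma~\ref{lem:lower-hA} together with Corollary~\ref{cor:weakly-stable}. Your plan is more geometric but, as written, it has several gaps that are not routine to close.

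First, the step ``this yields a basis $b_1,\dots,b_r$ of $H$ with $b_i\in iK\subseteq dK$'' does not follow from Minkowski's second theorem alone: the successive minima $\lambda_i$ of $H$ in $K$ could a priori be large. The fact you need is that $\phi_d(A_i)$ spans $H$ and sits inside $K$, so $K$ already contains $r$ linearly independent vectors of $H$, whence $\lambda_r\le 1$ and a reduced basis has $b_i\in O_r(1)\cdot K$. You should say this explicitly, because without it the claim is circular (the magnitude of $\lambda_i$ is exactly what controls $\Delta$). Second, the bound $\sum_j w_jq_j\le 2n$ ``using $P_d(A)\subseteq[0,n-1]$'' is not justified: the $d$-bounding box contains $A$ but need not be contained in $[0,n-1]$, and Lemma~\ref{lem:bound-on-box} only bounds the individual differences $q_j$ by $n^2$, not the diameter $\sum(w_j-1)q_j$. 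Since the weak-stability hypothesis requires the covering GAP to have differences at most $n^2$, this is not a cosmetic issue: your constructed differences $\psi(b_i)=\sum_j b_i^{(j)}q_j$ need an actual proof that they lie below $n^2$. Third, the rank-drop case, which you flag yourself, is the most serious: for $r<d$ your key claim that $\phi_d(A_i)$ sits in an $r$-dimensional GAP of volume $O_\beta(\textrm{Vol}(P_d(A))/\Delta)$ conflates the Euclidean covolume of $H$ in $\mathrm{span}_{\mathbb R}(H)$ with the combinatorial count $|H\cap K|$; the latter is governed by the $r$-dimensional cross-sectional volume $\textrm{vol}_r(K\cap\mathrm{span}(H))$, which has no clean a priori comparison to $\textrm{Vol}(P_d(A))$, and moreover the covolume can ``reset'' across rank drops, so the covolume-doubling count needs to be re-established separately at each rank level. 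Until these points are settled, the argument does not yet yield the required $O_\beta(1)$ bound on the number of iterations. The paper's route via Lemma~\ref{lem:lower-hA}, which works directly with the $h$-fold sumset size rather than a covering GAP, sidesteps all three difficulties.
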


\begin{proof}
Say that a subset $A'$ of $A \cup \{0\}$ is good if $0\in A'$ and, for all $d\in \mathcal{D}_A$ and any subset $A''$ of $A'$ with $|A''|\ge |A'|-x/C_0$ and $0\in A''$, $\langle \phi_d(A'')\rangle = \langle \phi_d(A')\rangle$. 

Let $A'_0 = A$. We iterate the following step. For $i \in [0,C_0/2]$, if $A'_i$ is good, then we output $A'=A'_i$. Otherwise, there exists $d\in \mathcal{D}_A$ and a subset $A''$ of $A'_i$ with $|A''|\ge |A'_i|-x/C_0$, $0\in A''$ and $\langle \phi_d(A'')\rangle \subsetneq \langle \phi_d(A'_i)\rangle$. Set $A'_{i+1} = A''$ and continue. We terminate when either $A'_i$ is good or we arrive at $i > C_0/2$. Observe that if the procedure terminates at iteration $i \le C_0/2$, then $A'_i$ is good and satisfies the desired property in the lemma statement. 

By Lemma \ref{lem:lower-hA}, $\max \mathcal{D}_A \le \beta + 1$. Thus, for each $i$, there exists $d\in \mathcal{D}_A$ such that the subgroup $\langle \phi_d(A'_i)\rangle$ has index at least $2^{i/(\beta+1)}$ in $\mathbb{Z}^{d}$. Let $\pi_j$ be the projection onto the $j$-th coordinate in $\mathbb{Z}^{d}$. Then there exists $j\le d$ such that  $\pi_j(\langle \phi_d(A'_i)\rangle)$ has index at least $2^{i/(d(\beta + 1))}$. In particular, for any box $B$ containing $0$, we have $|\langle \phi_d(A'_i)\rangle \cap B| \le |B|/2^{i/(d(\beta+1))}$. 

Assume that this procedure has not terminated by the $i$-th iteration. Let $d\in \mathcal{D}_A$ be such that the subgroup $\langle \phi_d(A'_i)\rangle$ has index at least $2^{i/(\beta+1)}$ in $\mathbb{Z}^{d}$. Since $d\in \mathcal{D}_A$, there exists $h \in [n^{1/\beta},n]$ such that $d$ is the $h$-dimension of $A$. 
Let $d'$ be the $h$-dimension of $A'_i$. By Lemma \ref{lem:lower-hA}, $d'$ is bounded in $\beta$ and 
\[
|h A'_i| \gg_\beta h^{d'} \textrm{Vol}(P_{d'}(A'_i)).
\]
Since $A$ is weakly-$(x,\beta)$-stable, Corollary~\ref{cor:weakly-stable} implies that $\textrm{Vol}(P_{d'}(A'_i)) \ge \frac{3}{4} \textrm{Vol}(P_{d'}(A))$ and so
\[
|h A'_i| \gg_\beta h^{d'} \textrm{Vol}(P_{d'}(A)).
\]
Furthermore, $A\subseteq P_{d'}(A)$, so $|h A| \le h^{d'} \textrm{Vol}(P_{d'}(A))$. Hence, we obtain that $|h A'_i| \gg_\beta |h A|$. But, again by Lemma \ref{lem:lower-hA}, 
\[
|h A| \gg_\beta h^{d} \textrm{Vol}(P_d(A)), 
\]
so that
\[
|h A'_i| \gg_\beta h^{d} \textrm{Vol}(P_d(A)).
\]
Since $h \phi_d(A'_i) \subseteq \langle \phi_d(A'_i)\rangle \cap h \phi_d(P_d(A))$, we have that 
\[
|\langle \phi_d(A'_i)\rangle \cap h \phi_d(P_d(A))| \gg_\beta h^{d} \textrm{Vol}(P_d(A)).
\]
Since we also have that 
\[
|\langle \phi_d(A'_i)\rangle \cap h \phi_d(P_d(A))| \ll_\beta |h \phi_d(P_d(A))| / 2^{i/(d(\beta+1))} \le h^{d} \textrm{Vol}(P_d(A)) / 2^{i/(d(\beta+1))},
\]
we get the bound $i\le d C_\beta \le C'_\beta$ for some constants $C_\beta,C'_\beta$ depending only on $\beta$. 
In particular, if the constant $C_0$ in the lemma statement satisfies $C_0>2C'_\beta$, then we arrive at a contradiction if the procedure has not terminated by the $C_0/2$-th iteration. Hence, for such a $C_0$, we can always find the desired subset $A'$ in the lemma statement. 
\end{proof}

Taking the lead from this lemma, we now define a notion of strong stability.

\begin{defn}
Let $\beta>1$ and let $C_0$ be the constant depending on $\beta$ in Lemma \ref{lem:stable-implies-span}. Let $A$ be a subset of $[0,n-1]$ with $0\in A$. For each positive integer
$d$, let $P_{d}(A)$ be the $d$-bounding box of $A$ and $\phi_d:P_d(A)\to \mathbb{Z}^{d}$ its identification map. 
We say that $A$ is \textit{strongly-$(x,\beta)$-stable} if it is weakly-$(x,\beta)$-stable and, 
for all $d\in \mathcal{D}_A$ and any $A'\subset A$ with $|A'|\ge|A|-x/C_0$ and $0\in A'$, we have that $\langle \phi_d(A')\rangle = \langle \phi_d(A)\rangle$. When $0\notin A$, we say that $A$ is \textit{strongly-$(x,\beta)$-stable} if $A\cup \{0\}$ is.
\end{defn}

Thus, Lemma \ref{lem:stable-implies-span} implies that a weakly-$(x,\beta)$-stable set $A$ has a subset $A'$ of size at least $|A|-x/2$ such that $A'$ is strongly-$(x/2,\beta)$-stable.

\begin{lem}
\label{lem:stable-subsets}For $\beta>1$, let $C_0$ be the constant depending on $\beta$ in Lemma \ref{lem:stable-implies-span}  and let $C\ge C_0$ be sufficiently large in terms of $\beta$. Let $A$ be a strongly-$(x,\beta)$-stable subset of $[0,n-1]$ of size $m$ with $0\in A$, where 
$n$ is sufficiently large in terms of $\beta$. Let $S$ be a random subset of $A$ of size $\alpha|A|$, where $\alpha x>C\log n$. 
Then the following claims hold: 
\begin{enumerate}
\item With probability at least $1-\exp(-\alpha x/16)$, the inequality $\textrm{Vol}(P_{d}(S\cup\{0\}))\ge\frac{3}{4}\textrm{Vol}(P_{d}(A))$ holds for all $d \in \mathcal{D}_{S}$.  
\item With probability at least $1-\exp(-\alpha x/(16C_0))$, the set $S$ is weakly-$(\frac{1}{2}\alpha x,\beta)$-stable and, furthermore, the following property holds. 
For each $d\in \mathcal{D}_A$, let $P_{d}(A)$ be the $d$-bounding box of $A$ and $\phi_d:P_d(A)\to \mathbb{Z}^{d}$ its identification map. Then, for any subset $S'$ of $S$ with $|S'|\ge |S|-\frac{1}{2}\alpha x/C_0$, 
$\langle \phi_d(S')\rangle = \langle \phi_d(A)\rangle$. In particular, $S$ is strongly-$(\frac{1}{2}\alpha x,\beta)$-stable. 
\end{enumerate}
\end{lem}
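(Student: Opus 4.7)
The plan for both claims is to union bound over polynomially many ``bad'' configurations, with each bad event controlled by hypergeometric concentration for the random subset $S$. Throughout, Lemma~\ref{lem:lower-hA} bounds $|\mathcal{D}_A|$ and $|\mathcal{D}_S|$ in terms of $\beta$, and $\textrm{Vol}(P_d(A)) \le \textrm{Vol}(P_1(A)) \le n$, so relevant GAPs and subgroups have bounded complexity. The strategy is to show that each bad event has probability at most $\exp(-\Omega(\alpha x))$ and that there are at most $n^{O_\beta(1)}$ such events, so the hypothesis $\alpha x > C \log n$ with $C$ large enough closes the union bound.

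For claim~1, a failure at some $d \in \mathcal{D}_S$ forces $S \cup \{0\}$ into a $d$-dimensional GAP $P$ containing $0$, with differences at most $n^2$ (by Lemma~\ref{lem:bound-on-box}) and volume at most $\tfrac{3}{4}\textrm{Vol}(P_d(A))$. Enumerating by differences, widths, and centered starting points gives $n^{O_\beta(1)}$ candidate $P$. By weak stability of $A$, $|A \setminus P| > x$ for each such $P$, so by the ratio bound $\binom{M}{k}/\binom{N}{k} \le (M/N)^k$, we have $\mathbb{P}[S \subseteq P] \le (|A \cap P|/|A|)^{\alpha|A|} \le e^{-\alpha x}$. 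Union bounding yields failure probability at most $e^{-\alpha x/16}$ once $C$ is sufficiently large.

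The weak-stability portion of claim~2 follows the same template. Conditioning on claim~1, any bad GAP for $S$ has volume at most $\tfrac{3}{4}\textrm{Vol}(P_d(S \cup \{0\})) \le \tfrac{3}{4}\textrm{Vol}(P_d(A))$, so it is a bad GAP for $A$; in particular, there are at most $n^{O_\beta(1)}$ such GAPs. For each, $\mathbb{E}|S \setminus P| \ge \alpha x$, and multiplicative hypergeometric Chernoff gives $\mathbb{P}[|S \setminus P| \le \tfrac{1}{2}\alpha x] \le e^{-\alpha x/8}$; union bounding yields $e^{-\alpha x/16}$.

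The main obstacle is the spanning property, which is equivalent to: for each $d \in \mathcal{D}_A$ and every proper subgroup $G \subsetneq L := \langle \phi_d(A)\rangle$, $|S \setminus \phi_d^{-1}(G)| > \tfrac{1}{2}\alpha x/C_0$. This ensures $\phi_d(S') \not\subseteq G$ for every $S' \subseteq S$ with $|S \setminus S'| \le \tfrac{1}{2}\alpha x/C_0$, forcing $\langle \phi_d(S')\rangle = L$. Strong stability of $A$ gives $|A \setminus \phi_d^{-1}(G)| > x/C_0$ for each proper $G$, so Chernoff yields $e^{-\alpha x/(8C_0)}$ per $G$. To restrict to $n^{O_\beta(1)}$ relevant $G$, I use that $L \cong \mathbb{Z}^r$ is torsion-free, so every proper $G$ lies in a maximal subgroup of prime index $p$; and using that $\phi_d(P_d(A))$ sits in a box with at most $n$ lattice points, standard lattice-point estimates show that for primes $p$ above a polynomial threshold an index-$p$ subgroup cannot intersect $\phi_d(A)$ densely enough for the bad event to occur, while primes below the threshold contribute $n^{O_\beta(1)}$ subgroups. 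A union bound then yields $\exp(-\alpha x/(16 C_0))$; setting $S' = S$ gives $\langle \phi_d(S)\rangle = L$, whence strong stability of $S$ follows.
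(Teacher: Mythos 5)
Your handling of Claim~1 and the weak-stability part of Claim~2 tracks the paper's: union bound over the polynomially many centered GAPs with differences at most $n^2$, and apply a concentration inequality for the hypergeometric distribution to each; the ratio bound $\binom{M}{k}/\binom{N}{k}\le (M/N)^k$ plays the role of the Hoeffding inequality cited in the paper and gives the same quantitative conclusion. (The conditioning on Claim~1 is an unnecessary detour, since $\textrm{Vol}(P_d(S\cup\{0\}))\le \textrm{Vol}(P_d(A))$ holds deterministically because $P_d(A)$ is a $d$-dimensional GAP containing $S\cup\{0\}$.)

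There is, however, a gap in your treatment of the spanning property. You reduce to maximal, i.e.\ prime-index, subgroups $G\subsetneq L=\langle\phi_d(A)\rangle$ and then claim that for primes $p$ above a polynomial threshold, lattice-point estimates rule out the bad event. This is not true: for a box $B=\phi_d(P_d(A))$ of widths $(w_1,\dots,w_d)$, an index-$p$ subgroup such as $G$ cut out by $n_d\equiv 0\pmod p$ satisfies $|B\cap G|=w_1\cdots w_{d-1}\lceil w_d/p\rceil$, which for $p>w_d$ equals $|B|/w_d$; if $w_d$ is small this is a constant fraction of $|B|$, independent of how large $p$ is. More to the point, nothing forces $\phi_d(A)$ to be spread evenly in $B$, so $|\phi_d(A)\cap G|$ can be as large as $|A|-x/C_0$ for subgroups $G$ of arbitrarily large index while $A$ remains strongly $(x,\beta)$-stable, and then the bad event has non-negligible probability for such $G$. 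The union bound over large-$p$ subgroups therefore does not close. The fix, which is the paper's route, is to observe that if the bad event occurs for some proper $\Gamma$, it also occurs for the subgroup $\langle\phi_d(A)\cap\Gamma\rangle$, so one only needs to union bound over subgroups of $\mathbb{Z}^d$ generated by elements of $\phi_d(P_d(A))$. Since any such subgroup has a generating set of size at most $d$ with entries polynomially bounded in $n$ (e.g.\ via Hermite normal form), there are at most $n^{O_\beta(1)}$ of them, and strong stability gives $|A\setminus \phi_d^{-1}(\Gamma)|\ge x/C_0$ for each, after which the same concentration inequality and union bound finish the argument.
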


\begin{proof} We verify the two claims in turn.

\vspace{2mm}
\noindent {\bf Proof of 1.} Assume that $\textrm{Vol}(P_{d}(S\cup\{0\})) < \frac{3}{4}\textrm{Vol}(P_{d}(A))$ for some $d \in \mathcal{D}_{S}$, noting, by Lemma \ref{lem:lower-hA}, that $d\le 1+\beta$. Then there exists a GAP $P$ of dimension at most $d$ and volume less than $\frac{3}{4}\textrm{Vol}(P_d(A))$ such
that all elements of $A\setminus P$ are not contained in $S$. Since $A$ is strongly-$(x,\beta)$-stable, we have that $|A\setminus P|\ge x$. By a result of Hoeffding~\cite[Theorem 4]{Hoeff}, the probability that all elements
of $A\setminus P$ are not contained in $S$ is at most $\exp(-\alpha x/8)$. Note that there are at most $n^{4(1+\beta)}$ centered GAPs of dimension at most $1+\beta$ with differences at most $n^2$ and widths at most $n$. 
Therefore, by Lemma \ref{lem:bound-on-box} and the union bound, using the assumption that $\alpha x > C \log n$, we obtain
that the probability  $\textrm{Vol}(P_{d}(S\cup\{0\}))<\frac{3}{4}\textrm{Vol}(P_{d}(A))$ for some $d \in \mathcal{D}_{S}$ is at most $\exp(-\alpha x/16)/2$. 

\vspace{2mm}
\noindent {\bf Proof of 2.} 
Assume that we can remove at most $\frac{1}{2}\alpha x$ elements from $S$ to obtain $S'$ so that there is a GAP $P$ of dimension $d\le 1+\beta$ with differences at most $n^2$ and volume at most  $\frac{3}{4}\textrm{Vol}(P_{d}(S \cup \{0\}))$ that contains $S'\cup \{0\}$. 
In particular, there exists a GAP $P$ of dimension at most $1+\beta$ with differences at most $n^2$ and volume at most $\frac{3}{4}\textrm{Vol}(P_{d}(S \cup \{0\}))\le \frac{3}{4}\textrm{Vol}(P_{d}(A))$ such that $|S\cap(A\setminus P)|\le\frac{1}{2}\alpha x$. Since $A$ is strongly-$(x,\beta)$-stable, $|A\setminus P|\ge x$, so  Hoeffding's result again implies that the probability  $|S\cap(A\setminus P)|\le\frac{1}{2}\alpha x$ is at most $\exp(-\alpha x/8)$. By the union bound, taken over all $n^{4(1+\beta)}$ possible choices for the centered GAP $P$, the probability that we can remove at most $\frac{1}{2}\alpha x$ elements from $S$ to obtain $S'$ with $S'\cup \{0\}\subseteq P$ for some such $P$ is at most $\exp(-\alpha x/16)/2$. 

Let $h \in [n^{1/\beta},n]$ and let $d$ be the $h$-dimension of $A$. For any proper subgroup $\Gamma$ of $\langle \phi_d(A)\rangle$, 
since $A$ is strongly-$(x,\beta)$-stable, we have $|A\setminus \Gamma| \ge x/C_0$. Therefore, taking a union bound over the $n^{C_\beta}$ choices of possible subgroups spanned by elements of $\phi_d(P_d(A))$ and using that $\alpha x > C \log n$ for $C$ sufficiently large in $\beta$, the probability that we can remove at most $\frac{1}{2}\alpha x/C_0$ elements from $S$ to obtain $S'$ with $\langle \phi_d(S')\rangle$ a proper subgroup of $\langle \phi_d(A)\rangle$ is at most $\exp(-\alpha x/(16C_0))/2$. The required conclusion follows by combining the results of the two paragraphs.
\end{proof}

\subsection{Resilience and preprocessing}\label{subsec:processing}

In this short subsection, we describe a preprocessing step that outputs a stable subset of $A$, allowing us to apply the results of the previous subsection. We first define yet another notion of stability.

\begin{defn}
Given $\epsilon >0$, $\beta > 1$ and a subset $A$ of $[0,n-1]$, we say that $A$ is \textit{$(\epsilon,\beta)$-resilient} if, 
for any $d\le 1+\beta$ and any $A'\subseteq A$ of size at least $|A|/100$, we have $\textrm{Vol}(P_{d}(A'\cup \{0\}))\ge n^{-\epsilon}\textrm{Vol}(P_{d}(A \cup \{0\}))$. 
\end{defn}

We have the following consequence of Lemma~\ref{lem:lower-hA}.

\begin{cor}\label{cor:dim-d}
Let $\beta>1$, $C>0$ and let $\epsilon>0$ be sufficiently small in $\beta$ and $C$. Let $A \subseteq [0,n-1]$ be $(\epsilon,\beta)$-resilient with $0\in A$. Let $h\in [n^{1/\beta},n]$ and let $d$ be the $h$-dimension of $A$. Assume that $A$ is contained in a $d$-dimensional GAP $Q$ with identification map $\phi_Q$ and $|\phi_Q(Q)| \le C|P_d(A)|$. 
Then, for any subset $A'$ of $A$ with size at least $|A|/100$, $\phi_d(A')$ has dimension $d$.
\end{cor}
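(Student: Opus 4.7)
The plan is a proof by contradiction. Suppose that for some $A' \subseteq A$ with $|A'| \ge |A|/100$, the affine span $V := \overline{\langle\phi_d(A')\rangle}$ in $\mathbb{R}^d$ has dimension $d' < d$. I will derive two bounds on $|h(A' \cup \{0\})|$ that become incompatible for $\epsilon$ sufficiently small in $\beta$ and $C$.

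For the lower bound, Lemma \ref{lem:lower-hA} applied to $A$ (whose $h$-dimension is $d$) gives $|hA| \gg_\beta h^d\,\textrm{Vol}(P_d(A))$, and combining with the trivial $|hA| \le h^e\,\textrm{Vol}(P_e(A))$ for any $e \le d$ yields the auxiliary estimate $\textrm{Vol}(P_e(A)) \gg_\beta h^{d-e}\,\textrm{Vol}(P_d(A))$. Letting $\hat d$ denote the $h$-dimension of $A' \cup \{0\}$ and applying Lemma \ref{lem:lower-hA} to that set, then inserting $(\epsilon,\beta)$-resilience in the form $\textrm{Vol}(P_{\hat d}(A'\cup\{0\})) \ge n^{-\epsilon}\,\textrm{Vol}(P_{\hat d}(A))$ together with the auxiliary estimate at $e = \hat d$, I obtain $|h(A'\cup\{0\})| \gg_\beta n^{-\epsilon} h^d\,\textrm{Vol}(P_d(A))$.

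For the upper bound, let $\tilde d$ be the affine dimension of $\phi_d(A'\cup\{0\})$, so $\tilde d \le d'+1 \le d$. Projecting the affine span of $\phi_d(A'\cup\{0\})$ onto $\tilde d$ coordinate directions along which the projection is injective, then pulling back through $\phi_d^{-1}$, places $A'\cup\{0\}$ inside a $\tilde d$-dimensional GAP $P''$ of volume at most $\textrm{Vol}(P_d(A))/W^{d-\tilde d}$ for some width $W \ge 1$ of the bounding box of $P_d(A)$. Since $h(A'\cup\{0\}) \subseteq hP''$, this gives $|h(A'\cup\{0\})| \le h^{\tilde d}\,\textrm{Vol}(P'')$. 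Comparing with the lower bound forces $(hW)^{d-\tilde d} \ll_\beta n^\epsilon$, which, since $h \ge n^{1/\beta}$ and $W \ge 1$, is a contradiction for $\epsilon < 1/\beta$ whenever $\tilde d < d$.

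The main obstacle is the degenerate case $\tilde d = d$, which arises precisely when $d' = d-1$ and $\phi_d(0) \notin V$, so that adjoining $\{0\}$ pushes the affine dimension back up to $d$ and the GAP $P''$ above provides no gain over $P_d(A)$ itself. In this case I plan to argue with $A'$ alone: $A'$ still sits in the genuine $(d-1)$-dimensional GAP $P' := \phi_d^{-1}(V \cap \phi_d(P_d(A)))$, and so $h(A'\cup\{0\}) = \bigcup_{k=0}^h kA'$ is contained in the $d$-dimensional GAP obtained from $P'$ by adjoining one translation generator, which has volume $O(h^d\,\textrm{Vol}(P'))$. Matching against the lower bound forces $\textrm{Vol}(P') \gg_\beta n^{-\epsilon}\,\textrm{Vol}(P_d(A))$, while a hyperplane-versus-box count gives $\textrm{Vol}(P') \le \textrm{Vol}(P_d(A))/W^\ast$ for a width $W^\ast$ of $P_d(A)$ determined by the normal direction of $V$; closing the argument then requires ruling out widths of $P_d(A)$ smaller than $n^\epsilon$, which I expect to deduce from the hypothesis that $d$ is the genuine $h$-dimension of $A$ together with a pigeonhole slicing along such a thin direction, which would otherwise allow $A$ to be captured by a lower-dimensional structure and drop its $h$-dimension below $d$.
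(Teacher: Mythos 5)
Your lower bound is correct and essentially matches the paper: apply Lemma \ref{lem:lower-hA} to $A'\cup\{0\}$, use resilience to compare $\textrm{Vol}(P_{\hat d}(A'\cup\{0\}))$ with $\textrm{Vol}(P_{\hat d}(A))$, then use $|hA|\le h^{\hat d}\textrm{Vol}(P_{\hat d}(A))$ and Lemma \ref{lem:lower-hA} applied to $A$ to land on $|h(A'\cup\{0\})|\gg_\beta n^{-\epsilon}h^d\textrm{Vol}(P_d(A))$.

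The difficulty is in the upper bound, and it stems from misreading what is being negated. The paper defines the \emph{dimension} of a set $B\subseteq\mathbb{Z}^d$ as $\dim\langle B\rangle$, the dimension of the \emph{linear} span, so the corollary asserts that the subgroup $\langle\phi_d(A')\rangle$ has full rank $d$. You instead assume that the \emph{affine} span $\overline{\langle\phi_d(A')\rangle}$ has dimension $d'<d$, which is strictly weaker. That mismatch is what manufactures the "degenerate case $\tilde d=d$": if $\phi_d(0)\notin V$ and $d'=d-1$, then $\langle\phi_d(A')\rangle$ already has dimension $d$, so the corollary's conclusion holds for this $A'$ and there is no contradiction to derive. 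Your proposed rescue, which would require ruling out thin widths of $P_d(A)$ by a pigeonhole slicing argument, is chasing a contradiction that doesn't exist; nothing in the hypotheses forbids $P_d(A)$ from having some small widths, and the argument is not expected to close.

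The paper's route has no case split. Negating the correct statement, suppose $\langle\phi_Q(A')\rangle$ is contained in a $(d-1)$-dimensional linear subspace $\Gamma$ of $\mathbb{R}^d$. Since $\Gamma$ is a subspace it contains $0=\phi_Q(0)$, so $h\phi_Q(A'\cup\{0\})\subseteq\Gamma\cap h\phi_Q(Q)$ with no side conditions. Pick a basis vector $e_i\notin\Gamma$; then $\Gamma$ meets each coset of $\mathbb{Z}e_i$ in at most one point, so $|\Gamma\cap h\phi_Q(Q)|\le h^{d-1}\prod_{j\ne i}w_j\le h^{d-1}|\phi_Q(Q)|\le Ch^{d-1}|P_d(A)|$. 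Comparing with your lower bound forces $h\ll_\beta Cn^\epsilon$, contradicting $h\ge n^{1/\beta}$ once $\epsilon<1/\beta$ is small enough. The non-degenerate part of your upper bound (the case $\tilde d\le d-1$) is, up to cosmetics, this same argument; the entire degenerate branch should simply be deleted.
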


\begin{proof}
Assume that $\phi_Q(A')$ has dimension smaller than $d$. Then $\phi_Q(A')$ is contained in the intersection of a $(d-1)$-dimensional subspace $\Gamma$ and the box $P:=\phi_Q(Q)$ with widths $w_1,\dots,w_d$. 

Since $\Gamma$ has dimension $d-1$, there exists a basis vector $e_i\in \mathbb{Z}^d$ which is not contained in $\Gamma$. Hence, $\langle\phi_Q(A')\rangle$ intersects each translate of $\mathbb{Z}e_i$ in at most one point. Furthermore, the number of translates of $\mathbb{Z}e_i$ intersecting $hP$ is at most $h^{d-1}\prod_{j\ne i}w_j \le h^{d-1}|P|$. 
Hence, 
\[
|h(A'\cup \{0\})|\le |h\phi_Q(A'\cup \{0\})|\le |h\phi_Q(A) \cap \langle\phi_Q(A')\rangle| \le h^{d-1} |P|.
\]

On the other hand, by Lemma \ref{lem:lower-hA}, $d\le 1+\beta$ and, letting $d'$ be the $h$-dimension of $A' \cup \{0\}$, we have 
\[
|h(A'\cup \{0\})| \gg_{\beta} h^{d'} \textrm{Vol}(P_{d'}(A'\cup \{0\})). 
\]
Since $A$ is $(\epsilon,\beta)$-resilient, we have
\[
\textrm{Vol}(P_{d'}(A'\cup \{0\})) \ge n^{-\epsilon} \textrm{Vol}(P_{d'}(A)).
\]
Thus, 
\[
|h(A'\cup \{0\})|\gg_{\beta} n^{-\epsilon} h^{d'}\textrm{Vol}(P_{d'}(A)) \ge n^{-\epsilon} |hA|. 
\]
Again by Lemma \ref{lem:lower-hA}, we have $|hA|\gg_{\beta} h^{d}|P_d(A)|$, so 
\[
h^{d-1}|P| \gg_{\beta} n^{-\epsilon}h^d |P_d(A)| \ge C^{-1}n^{-\epsilon}h^d |P|.
\]
However, since $h\ge n^{1/\beta}$, this is a contradiction for $\epsilon$ sufficiently small. 
\end{proof}

The next lemma shows that we can replace a set $A$ with a large subset which is strongly stable and resilient. 

\begin{lem} \label{lem:reduction}
Let $\beta>1$, $\epsilon>0$ 
and let $A$ be a subset of $[0,n-1]$ of size $m$. 
Assume that $n \le m^{\beta}$ and $n$ is sufficiently large. Then there is a constant $c'>0$ depending only on $\epsilon$ and $\beta$ such that the following holds. 
For any positive integer $t$, there exists a subset $\tilde{A}$ of $A$ of size at least
$c'm - 100 \beta^2 t$ such that $\tilde{A}$ is both strongly-$(\frac{t}{\log m},\beta)$-stable
and $(\epsilon,\beta)$-resilient. 
\end{lem}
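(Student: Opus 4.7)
The plan is a two-phase iterative reduction: first a potential-function loop that removes violations of resilience and of weak stability simultaneously, and then one application of Lemma~\ref{lem:stable-implies-span} to upgrade weak stability to strong stability.

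Let $D := \lfloor 1+\beta\rfloor$, which by Lemma~\ref{lem:lower-hA} bounds the dimension of any $P_d$ that appears in either definition. Define the potential
\[
\Phi(B) \;:=\; \sum_{d=1}^{D} \log \mathrm{Vol}(P_d(B \cup \{0\})).
\]
Since every GAP containing $B\cup\{0\}$ also contains $B'\cup\{0\}$ whenever $B'\subseteq B$, the minimal-volume $P_d$ only shrinks under restriction, so $\Phi$ is monotone non-increasing, with $0\le \Phi(B)=O(\beta^2 \log n)$. Starting from $A_0 := A \cup \{0\}$, I run the following loop. \emph{Resilience fix:} if some $A' \subseteq A_i$ with $|A'|\ge |A_i|/200$ satisfies $\mathrm{Vol}(P_d(A'\cup\{0\})) < n^{-\epsilon/2}\mathrm{Vol}(P_d(A_i\cup\{0\}))$ for some $d\le D$, replace $A_i$ by $A'\cup\{0\}$; this drops $\Phi$ by at least $(\epsilon/2)\log n$. \emph{Stability fix:} otherwise, if $A_i$ fails weak-$(2t/\log m,\beta)$-stability, witnessed by some $A''\subseteq A_i$ with $|A''|\ge |A_i|-2t/\log m$ contained in a GAP $P$ of dimension $d\le D$ with differences $\le n^2$ and $\mathrm{Vol}(P)\le \tfrac34\mathrm{Vol}(P_d(A_i\cup\{0\}))$, replace $A_i$ by $A''\cup\{0\}$; this drops $\Phi$ by at least $\log(4/3)$. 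Halt otherwise. The potential bound permits at most $O(\beta^2/\epsilon)$ resilience fixes (each shrinking $|A_i|$ by at most a factor $200$) and at most $O(\beta^2 \log n)$ stability fixes (each deleting at most $2t/\log m$ elements). Consequently the output $A^{(1)}$ has $|A^{(1)}| \ge c_0(\epsilon,\beta)\,m - 100\beta^2 t$ (absorbing the $\log n/\log m \le \beta$ ratio into the constant), and by construction is weakly-$(2t/\log m,\beta)$-stable and $(\epsilon/2,\beta)$-resilient with threshold density $1/200$.

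Next, I apply Lemma~\ref{lem:stable-implies-span} to $A^{(1)}$ with $x = 2t/\log m$, producing $\tilde{A} \subseteq A^{(1)}\cup\{0\}$ with $|\tilde{A}| \ge |A^{(1)}| - t/\log m$ on which the subgroup condition from that lemma holds. Weak stability transfers from $A^{(1)}$ to $\tilde{A}$: any $A''\subseteq \tilde{A}$ missing at most $t/\log m$ elements from $\tilde{A}$ is a subset of $A^{(1)}$ missing at most $2t/\log m$ elements, and since $\mathrm{Vol}(P_d(\tilde{A}\cup\{0\}))\le \mathrm{Vol}(P_d(A^{(1)}\cup\{0\}))$, any GAP of small volume relative to $P_d(\tilde{A})$ is also small relative to $P_d(A^{(1)})$, so the weak stability of $A^{(1)}$ forbids it. Combining this with the subgroup condition yields strong-$(t/\log m,\beta)$-stability. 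For $(\epsilon,\beta)$-resilience of $\tilde{A}$, any $A'\subseteq \tilde{A}$ with $|A'|\ge |\tilde{A}|/100$ satisfies $|A'|\ge |A^{(1)}|/200$ (assuming $t/\log m \ll |A^{(1)}|$; otherwise the target bound is vacuous), so the density-$1/200$ resilience of $A^{(1)}$ together with $\mathrm{Vol}(P_d(A^{(1)}\cup\{0\}))\ge \mathrm{Vol}(P_d(\tilde{A}\cup\{0\}))$ gives
\[
\mathrm{Vol}(P_d(A'\cup\{0\})) \ge n^{-\epsilon/2}\mathrm{Vol}(P_d(A^{(1)}\cup\{0\})) \ge n^{-\epsilon}\mathrm{Vol}(P_d(\tilde{A}\cup\{0\})),
\]
as required.

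The main delicate point is arranging for all three properties — resilience, weak stability, and the subgroup condition — to coexist at the end. This is handled by (a) enforcing resilience with slack (using $\epsilon/2$ and density $1/200$ in place of $\epsilon$ and $1/100$) so that the small post-loop deletions cannot destroy it; (b) interleaving the two kinds of fixes in a single monotone potential-driven loop, so that neither property regresses while the other is being enforced; and (c) invoking Lemma~\ref{lem:stable-implies-span} with parameter $2t/\log m$, which is the minimal slack that both can be afforded inside the stability budget and yields the required final parameter $t/\log m$.
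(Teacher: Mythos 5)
Your proof is correct, and it takes a genuinely different route from the paper's. The paper runs a three\hyp{}step loop: Step~1 removes small subsets until weak stability holds, Step~2 applies Lemma~\ref{lem:stable-implies-span} to upgrade to strong stability, Step~3 tests resilience and, upon failure, removes a dense subset and restarts at Step~1. Thus Lemma~\ref{lem:stable-implies-span} is invoked once per outer iteration. You instead collapse the weak\hyp{}stability and resilience repairs into a single loop governed by the monotone potential $\Phi = \sum_{d\le D}\log\mathrm{Vol}(P_d)$, and apply Lemma~\ref{lem:stable-implies-span} exactly once at the end, compensating by building slack into the resilience parameters so that the final small deletion cannot destroy resilience. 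Both give the same iteration counts up to constants, and your element accounting ($200^{-r}m - s\cdot 2t/\log m$ with $r=O(\beta/\epsilon)$, $s=O(\beta\log n)$) reproduces the $c'm - 100\beta^2 t$ bound. One small observation: the $\epsilon/2$ slack in the exponent is actually superfluous, since $\mathrm{Vol}(P_d(\tilde{A}\cup\{0\})) \le \mathrm{Vol}(P_d(A^{(1)}\cup\{0\}))$ holds automatically when $\tilde{A}\subseteq A^{(1)}\cup\{0\}$; the density slack ($1/200$ in place of $1/100$) is the part that is genuinely needed, and your justification of it (the target bound forces $t/\log m \ll |A^{(1)}|$, hence $|\tilde{A}|\ge \tfrac{99}{100}|A^{(1)}|$) is sound. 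Your final claim that the output of Lemma~\ref{lem:stable-implies-span} is strongly stable glosses over the fact that the lemma gives the span condition with respect to $P_d(A^{(1)})$ and $\mathcal{D}_{A^{(1)}}$ rather than $P_d(\tilde A)$ and $\mathcal{D}_{\tilde A}$, but the paper makes the identical implicit identification in its remark following the definition of strong stability, so this is not a gap relative to the paper's own standard of rigor.
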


\begin{proof}
Assume that $A$ is not strongly-$(\frac{t}{\log m},\beta)$-stable
and $(\epsilon,\beta)$-resilient. We run the following process.

\vspace{2mm}

{\noindent \bf Step 1.} If $A$ is not weakly-$(\frac{2t}{\log m},\beta)$-stable, we can remove at most $\frac{2t}{\log m}$ elements from $A$ to obtain a subset $A'$ whose $d$-bounding box has volume at most a $3/4$-fraction of the $d$-bounding box of $A$ for some $d\le 1+\beta$. We replace $A$ by $A'$ and repeat this step until $A$ is weakly-$(\frac{2t}{\log m},\beta)$-stable, only then moving to Step 2. 

\vspace{2mm}

{\noindent \bf Step 2.} If $A$ is weakly-$(\frac{2t}{\log m},\beta)$-stable but not strongly-$(\frac{t}{\log m},\beta)$-stable, apply Lemma \ref{lem:stable-implies-span} to find a subset $A'$ of $A$ with $|A'| \ge |A|-\frac{t}{\log m}$ which is strongly-$(\frac{t}{\log m},\beta)$-stable. 
We replace $A$ by $A'$ and then move to Step 3. 

\vspace{2mm}

{\noindent \bf Step 3.} If $A$ is $(\epsilon,\beta)$-resilient, we terminate with the required set. If $A$ is not $(\epsilon,\beta)$-resilient, there is a subset $A'$ of
$A$ with size at least $|A|/100$ and $d\le 1+\beta$ such that $\textrm{Vol}(P_d(A'\cup \{0\}))< n^{-\epsilon}\textrm{Vol}(P_d(A\cup \{0\}))$. In this case, we replace $A$ by $A'$ and return to Step 1. 

\vspace{2mm}

Note that in each iteration of Step 1, the volume of the $d$-bounding box goes down by a factor of $3/4$. Since each $d$-bounding box of $A$ has size at most $n$ 
and there are at most $1+\beta$ choices for $d$, there are 
at most $(1+\log_{4/3}n)(1+\beta)$ such iterations. 
The number of iterations of Step 2 is bounded by the number of iterations of Step 1. Finally, the number of iterations of Step 3 is bounded by a constant in $\epsilon$ and $\beta$. Indeed, in each iteration of Step 3, for some $d\le 1+\beta$, we have that $\textrm{Vol}(P_d(A\cup \{0\}))$ decreases by a factor of at least $n^{\epsilon}$. Since $\textrm{Vol}(P_d(A\cup \{0\}))\le n$ in the first iteration,
 there can be at most $(1+\beta)/\epsilon$ many iterations of Step 3.  

Furthermore, in each iteration of Step 1 or Step 2, the size of the set decreases by at most $\frac{2t}{\log m}$, while in each iteration of Step 3, the size of the set decreases by at most a factor of $100$. Thus, the iterations must terminate at a strongly-$(\frac{t}{\log m},\beta)$-stable
and $(\epsilon,\beta)$-resilient set with size at least $c'm-2\cdot \frac{2t}{\log m}\cdot (\log_{4/3}n+1)(1+\beta) \ge c'm-100 \beta^2 t$, where $c' = 100^{-(1+\beta)/\epsilon}$ is a constant depending only on $\beta$ and $\epsilon$.
\end{proof}

We also have the following variant of Lemma \ref{lem:reduction} with a much larger $\tilde{A}$ if we do not require that $\tilde{A}$ is $(\epsilon,\beta)$-resilient. The proof is essentially identical to that of Lemma \ref{lem:reduction} and so is omitted.

\begin{lem}\label{lem:reduction-stable}
Let $\beta>1$
and let $A$ be a subset of $[0,n-1]$ of size $m$. 
Assume that $n \le m^{\beta}$ and $n$ is sufficiently large. Then, 
for any positive integer $t$, there exists a subset $\tilde{A}$ of $A$ of size at least
$m - 100 \beta^2 t$ such that $\tilde{A}$ is  strongly-$(\frac{t}{\log m},\beta)$-stable. 
\end{lem}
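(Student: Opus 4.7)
The plan is to follow the same iterative reduction used in Lemma \ref{lem:reduction}, simply discarding Step 3 (the resilience step) since it is no longer part of the conclusion. So we only alternate between restoring weak stability (Step 1) and upgrading weak stability to strong stability (Step 2), and we never restart.

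More precisely, I would proceed as follows. Initialize $\tilde A := A$ and run the following loop. While $\tilde A$ fails to be weakly-$(\tfrac{2t}{\log m},\beta)$-stable, pick $d\le 1+\beta$ and a witness subset $\tilde A'\subseteq \tilde A$ with $0\in \tilde A'$, $|\tilde A'|\ge |\tilde A|-\tfrac{2t}{\log m}$ and $\mathrm{Vol}(P_d(\tilde A'))\le \tfrac34\mathrm{Vol}(P_d(\tilde A))$; replace $\tilde A$ by $\tilde A'$ and repeat. Once $\tilde A$ is weakly-$(\tfrac{2t}{\log m},\beta)$-stable, apply Lemma \ref{lem:stable-implies-span} (with $x=\tfrac{2t}{\log m}$) to obtain a subset $\tilde A''$ of $\tilde A$ with $|\tilde A''|\ge|\tilde A|-\tfrac{t}{\log m}$ which is strongly-$(\tfrac{t}{\log m},\beta)$-stable, and output this $\tilde A''$.

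For the bookkeeping, note that each iteration of Step 1 strictly decreases the volume of the $d$-bounding box by a factor of $3/4$ for one of the at most $1+\beta$ relevant dimensions $d$. Since each such $d$-bounding box has volume at most $n\le m^\beta$ (because the original $A$ lies in $[0,n-1]$, so its bounding box starts with volume at most $n$), the total number of Step 1 iterations is bounded by $(1+\beta)\log_{4/3}(n)\le C\beta\cdot \log m$ for an absolute constant $C$. Consequently, the total number of elements removed in Step 1 is at most
\[
C\beta\log m\cdot \frac{2t}{\log m}\le 2C\beta^2 t,
\]
and Step 2 removes at most $\tfrac{t}{\log m}\le t$ more. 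With a conservative constant, this gives $|\tilde A|\ge m-100\beta^2 t$, matching the stated bound.

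The only subtlety is that Lemma \ref{lem:stable-implies-span} is stated for weakly-$(x,\beta)$-stable sets, so we need to confirm that after exiting the Step 1 loop we are indeed weakly-$(\tfrac{2t}{\log m},\beta)$-stable, which is precisely the loop-termination condition. The rest is pure iteration counting and no new ideas are needed beyond those already used in the proof of Lemma \ref{lem:reduction}; this is why the authors omit the argument. The only minor obstacle is to make sure the removal in Step 2 (which costs $\tfrac{t}{\log m}$) does not disturb weak stability in a way that invalidates the strong stability certificate, but this is precisely what Lemma \ref{lem:stable-implies-span} guarantees.
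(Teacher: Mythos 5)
Your proposal is correct and is exactly what the paper intends: the authors state that the proof of Lemma~\ref{lem:reduction-stable} is ``essentially identical to that of Lemma~\ref{lem:reduction}'' with the resilience Step~3 dropped, so one simply runs the Step~1 loop to restore weak stability and then applies Lemma~\ref{lem:stable-implies-span} once. (A tiny slip in your bookkeeping: the iteration count is bounded by $(1+\beta)\log_{4/3}n\le C\beta^2\log m$, not $C\beta\log m$, since $\log_{4/3}n\le\beta\log_{4/3}m$; this is what produces the $\beta^2 t$ loss, and your final bound is still correct.)
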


\subsection{Growing the set of subset sums}\label{subsec:growing-sum}

In this final subsection, we collect some simple results which will allow us to control the growth of a set of subset sums as we iteratively add elements to the set. Similar results can already be found in the work of Erd\H{o}s and Heilbronn~\cite{EH64} and Olson~\cite{Ols68} from the 1960s.

\begin{lem} \label{lem:period-addition}
Let $S$ be a finite set of integers and let
$a_{1},\dots,a_{k}$ be distinct integers. Then 
\[
|(S+a_1+\dots+a_k)\setminus S| \le \sum_{i=1}^{k}|(S+a_{i})\setminus S|.
\]
\end{lem}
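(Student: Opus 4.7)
The plan is to prove the lemma by induction on $k$. The base case $k=1$ is immediate since the inequality becomes an equality.

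For the inductive step, set $T = S + a_1 + \dots + a_{k-1}$, so $S + a_1 + \dots + a_k = T + a_k$. The key observation is the set-theoretic inclusion
\[
(T+a_k)\setminus S \;\subseteq\; \bigl((T+a_k)\setminus (S+a_k)\bigr) \;\cup\; \bigl((S+a_k)\setminus S\bigr),
\]
which one checks directly: any $x \in (T+a_k)\setminus S$ either lies in $S+a_k$, in which case it belongs to $(S+a_k)\setminus S$, or it does not, in which case it belongs to $(T+a_k)\setminus (S+a_k)$.

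Since translation by $a_k$ is a bijection, $(T+a_k)\setminus (S+a_k) = (T\setminus S) + a_k$, and in particular this set has size $|T\setminus S|$. Taking cardinalities in the displayed inclusion therefore gives
\[
|(S+a_1+\dots+a_k)\setminus S| \;\le\; |(S+a_1+\dots+a_{k-1})\setminus S| \;+\; |(S+a_k)\setminus S|.
\]
Applying the inductive hypothesis to the first term on the right yields the desired bound $\sum_{i=1}^{k}|(S+a_i)\setminus S|$.

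There is no real obstacle: the argument is a clean application of the translation-invariance of cardinality combined with the elementary set identity above, and the distinctness of the $a_i$ is not actually used (the inequality holds for arbitrary, not necessarily distinct, integers).
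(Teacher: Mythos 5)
Your proof is correct and is essentially the same as the paper's: both use induction together with the elementary inclusion $|C\setminus A| \le |B\setminus A| + |C\setminus B|$ and translation-invariance of cardinality, differing only in the trivial choice of the intermediate set (you take $B = S+a_k$, the paper takes $B = S+a_1+\dots+a_{k-1}$). Your observation that distinctness of the $a_i$ is never used is accurate.
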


\begin{proof}
We prove the lemma by induction on $k$. The statement is obvious for $k=1$. Assuming the statement is true for $k\le h$, we have, for $k=h+1$, that 
\begin{align*}
|(S+a_1+\dots+a_{h+1})\setminus S| & \le |(S+a_1+\dots+a_{h})\setminus S| + |(S+a_1+\dots+a_{h+1})\setminus (S+a_1+\dots+a_h)| \\
& =  |(S+a_1+\dots+a_{h})\setminus S| + |(S+a_{h+1})\setminus S| \\ & \le \sum_{i=1}^{h+1}|(S+a_{i})\setminus S|.
\end{align*}
Hence, the statement is true for all $k\ge 1$. 
\end{proof}

\begin{lem}
\label{lem:double-counting}
Let $S$ be a finite non-empty set of integers. Then
the set of $a\in\mathbb{Z}$ with $|(S+a)\setminus S| < \frac{|S|}{2}$
has size less than $2|S|$. 
\end{lem}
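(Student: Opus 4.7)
The plan is to prove this by a simple double counting argument, reflecting the name of the lemma. Let $T = \{a \in \mathbb{Z} : |(S+a) \setminus S| < |S|/2\}$ denote the set in question. The goal is to show $|T| < 2|S|$.

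First I would rewrite the defining condition of $T$ in terms of intersection rather than set difference. Since $|S+a| = |S|$, the identity $|(S+a) \setminus S| = |S| - |(S+a) \cap S|$ shows that $a \in T$ if and only if $|(S+a) \cap S| > |S|/2$.

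Next, I would count the cardinality of the set
\[
N = \{(a,s) \in T \times S : s \in (S+a) \cap S\}
\]
in two ways. Summing over $a \in T$, each $a$ contributes more than $|S|/2$ pairs by the reformulated condition, so $|N| > |T| \cdot |S|/2$. Summing instead over $s \in S$, for each fixed $s$ the condition $s \in S+a$ forces $a = s - s'$ for some $s' \in S$, giving at most $|S|$ admissible values of $a$. Hence $|N| \le |S| \cdot |S| = |S|^2$. Combining the two bounds yields $|T| \cdot |S|/2 < |S|^2$, so $|T| < 2|S|$, as required.

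There is no real obstacle here; the main thing to be careful about is simply the strictness of the inequality, which comes from the strict inequality $|(S+a) \cap S| > |S|/2$ in the definition of $T$ and propagates through the first counting bound.
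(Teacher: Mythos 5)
Your proof is correct and is essentially the same double-counting argument as the paper's: the paper counts multiplicities of $a$ in the multiset $\{s-s':s,s'\in S\}$, which is exactly your count of $N$ summed the other way. Nothing to add.
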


\begin{proof}
Consider the multiset $\{s-s':s,s' \in S\}$ of size $|S|^2$. If $|(S+a)\setminus S| < \frac{|S|}{2}$, then $a$ appears more than $|S|/2$ times in this multiset. Hence, there are fewer than  $|S|^2/(|S|/2)=2|S|$ such $a$. 
\end{proof}

\begin{lem}
\label{lem:growing-sum}Let $S$ and $A$ be finite non-empty sets of integers and 
let $k$ be such that $|kA| \ge 2|S|$. Then there exists $a\in A$
such that $|(S+a)\setminus S|\ge\frac{|S|}{2k}$. 
\end{lem}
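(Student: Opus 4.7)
The plan is to argue by contradiction, using Lemmas \ref{lem:period-addition} and \ref{lem:double-counting} in combination. Suppose, for contradiction, that every $a\in A$ satisfies $|(S+a)\setminus S| < \frac{|S|}{2k}$. I would like to show that then $|(S+b)\setminus S| < \frac{|S|}{2}$ for every $b\in kA$, which by Lemma~\ref{lem:double-counting} forces $|kA| < 2|S|$, contradicting the hypothesis.

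So fix any $b \in kA$ and write $b = a_1 + \dots + a_k$ with $a_1, \dots, a_k \in A$ (not necessarily distinct). The key observation is that the inductive argument in Lemma~\ref{lem:period-addition} does not actually require the $a_i$ to be distinct: at each step one uses only the trivial bound
\[
|(S+a_1+\dots+a_{i})\setminus S| \le |(S+a_1+\dots+a_{i-1})\setminus S| + |(S+a_1+\dots+a_{i})\setminus (S+a_1+\dots+a_{i-1})|,
\]
together with the translation-invariance identity $|(S+a_1+\dots+a_i)\setminus (S+a_1+\dots+a_{i-1})| = |(S+a_i)\setminus S|$, both of which hold regardless of repetitions. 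Thus $|(S+b)\setminus S| \le \sum_{i=1}^k |(S+a_i)\setminus S| < k\cdot \frac{|S|}{2k} = \frac{|S|}{2}$, as desired.

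Consequently, every $b\in kA$ lies in the set $\{a\in\mathbb Z : |(S+a)\setminus S| < \frac{|S|}{2}\}$, which by Lemma~\ref{lem:double-counting} has size less than $2|S|$. Hence $|kA| < 2|S|$, contradicting the assumption $|kA| \ge 2|S|$. Therefore some $a\in A$ must satisfy $|(S+a)\setminus S|\ge \frac{|S|}{2k}$, completing the proof. There is no real obstacle here: the only subtle point is noticing that Lemma~\ref{lem:period-addition} applies even with repeated summands, so that it can be combined cleanly with Lemma~\ref{lem:double-counting} via a counting argument over $kA$.
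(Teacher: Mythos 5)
Your proof is correct and follows exactly the same route as the paper: assume every $a\in A$ gives a small increment, transfer to $kA$ via Lemma~\ref{lem:period-addition}, then contradict via Lemma~\ref{lem:double-counting}. You are in fact slightly more careful than the paper, in that you explicitly note that Lemma~\ref{lem:period-addition} (stated for distinct summands) applies even when $b\in kA$ is written with repeated $a_i$'s --- a genuine subtlety that the paper glosses over but that its proof of Lemma~\ref{lem:period-addition} indeed supports.
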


\begin{proof}
Assume that $|(S+a)\setminus S|<\frac{|S|}{2k}$ for
all $a\in A$. By Lemma \ref{lem:period-addition}, this implies that 
$|(S+a)\setminus S|<\frac{|S|}{2}$ for all $a\in kA$. But then, by Lemma \ref{lem:double-counting},
we have $|kA| < 2|S|$, contradicting our assumption. 
\end{proof}

\section{Proof of Theorem \ref{thm:hom-AP-build}}

In this section, we give the proof of Theorem \ref{thm:hom-AP-build}
using the tools developed in the previous section. We first give an overview of the argument, which shares certain common features with the framework used in \cite{CFP}. First, we randomly partition $A$, or rather a large stable subset $\hat{A}$ of $A$, into $\ell$ sets $A_1,\dots,A_\ell$ of roughly equal size which, with high probability, inherit the relevant stability properties from $\hat{A}$. For each set $A_i$, 
we then find a subset $A_i'$ of size $cs/\ell$ for some positive constant $c$ such that  
\[
|\Sigma(A'_{i})|\gg_{\beta,\eta}\left|\frac{s}{\ell}(A\cup\{0\})\right|.
\]
Once this is achieved, we can obtain the desired homogeneous GAP by summing the sets $\Sigma(A'_i)$ and using Lemma \ref{lem:ell-box} and Corollary \ref{cor:non-reduced}. 

To show that we can find subsets $A'_i$ of each $A_i$ with  $|\Sigma(A'_i)|$ large, 
we consider an iterative procedure where we add in one element of $A_i$ at a time so as to maximize the growth of the set of subset sums at each step. After step $j$, we will have a subset $S_j$ of $A_i$ with $j$ elements removed and a set $\Sigma(j)$ consisting of the subset sums of the $j$ removed elements. We initialize with $S_{0}=A_{i}$ and $\Sigma(0)=\{0\}$. Then, at each step $j\ge1$, we pick an element $a_{j}\in S_{j-1}$ such that $|(\Sigma(j-1)+a_{j})\setminus\Sigma(j-1)|$
is maximized and let $S_{j}=S_{j-1}\setminus\{a_{j}\}$ and $\Sigma(j)=\Sigma(j-1)\cup(\Sigma(j-1)+a_{j})$. We run this iteration for $cs/\ell$ steps. 

In order to control the growth of $|\Sigma(j)|$ at each step, we appeal to Lemma \ref{lem:growing-sum}, which relates the growth of $|\Sigma(j)|$ to the size of the iterated sumsets of the available elements $S_{j-1}$. 
More concretely, 
$|\Sigma(j)|/|\Sigma(j-1)|$ will be at least $1+1/(2k_j)$, where $k_j$ is the smallest integer such that $|k_j(S_{j-1} \cup \{0\})| \ge 2|\Sigma(j-1)|$. Using that $|S_{j-1}| \ge |A_i| - j + 1$, we define certain numbers $t_h$ which give lower bounds on the sizes of $2^h (S_{j-1} \cup \{0\})$. 
In particular, when $|\Sigma(j-1)| \le t_h$, we have that $|\Sigma(j)|$ grows by a factor of at least $1+2^{-h-1}$. This allows us to bound the number of iterations where $|\Sigma(j)|$ lies in the interval $[t_h,t_{h+1}]$, as it must grow significantly in each such iteration. 
Combining this with estimates on $t_h$, we obtain the desired lower bound on $|\Sigma(cs/\ell)|$ at the end of our iteration.  

\begin{proof}[Proof of Theorem \ref{thm:hom-AP-build}]
By Lemma \ref{lem:reduction-stable} with $t=s\log m$,
we can replace $A$ by a subset $\hat{A}$ of size at least $m-100(4\beta/\eta)^2 s\log m$
which is strongly-$(s,4\beta/\eta)$-stable.

Let $\ell$ be a constant to be chosen later. Partition $\hat{A}$ randomly
into $\ell$ sets $A_{1},\dots,A_{\ell}$ of roughly equal size. Let $C_0$ be the constant depending only on $4\beta/\eta$ from Lemma \ref{lem:stable-implies-span}. By Lemma \ref{lem:stable-subsets}, there is a positive constant $C_1$ depending only on 
$4\beta/\eta$ such that if $s>C_1\ell \log n$, then, with probability at least $1 - \exp\left(-\frac{s}{32 C_0 \ell}\right)$, the following event $\mathcal{E}$ holds:  
\begin{itemize}
\item For all $i\in[\ell]$, $A_{i} \cup \{0\}$ is strongly-$(\frac{s}{2\ell},4\beta/\eta)$-stable.
\item For all $i\in[\ell]$ and all $d\in \mathcal{D}_{A_i}$,  $\textrm{Vol}(P_d(A_i\cup \{0\})) \ge \frac{3}{4} \textrm{Vol}(P_d(\hat{A}\cup \{0\}))$. 
\item For each $h$ such that $2^h \in [n^{\eta/4\beta},n]$, let $d$ be the $2^h$-dimension of $\hat{A}\cup \{0\}$. Let $P_{d}(\hat{A}\cup \{0\})$ be the $d$-bounding box of $\hat{A}\cup \{0\}$ and $\phi_d:P_d(\hat{A}\cup \{0\})\to \mathbb{Z}^{d}$ its identification map. Then, for all $i\in[\ell]$ and any subset $S$ of $A_i$ with $|S|\ge |A_i|-\frac{s}{2C_0\ell}$, $\langle \phi_d(S)\rangle = \langle \phi_d(\hat{A})\rangle$.  
\end{itemize}

We next show that, under the event $\mathcal{E}$, we can find a subset $A_{i}'$
of $A_{i}$ of size at most $\frac{s}{2C_0\ell}$ such that 
\[
|\Sigma(A'_{i})|\gg_{\beta,\eta}\left|\frac{s}{\ell}(\hat{A}\cup\{0\})\right|.
\]

We consider the following iterative process. Initialize $S_{0}=A_{i}$
and $\Sigma(0)=\{0\}$. At each step $j\ge1$, we pick an element
$a_{j}\in S_{j-1}$ such that $|(\Sigma(j-1)+a_{j})\setminus\Sigma(j-1)|$
is maximized. We then let $S_{j}=S_{j-1}\setminus\{a_{j}\}$ and
$\Sigma(j)=\Sigma(j-1)\cup(\Sigma(j-1)+a_{j})$. We run this iteration for $\frac{s}{2C_0\ell}$ steps. 

If, at step $j$, we let $k_{j}$ be
the smallest positive integer such that $|k_{j}(S_{j-1} \cup \{0\})| \ge 2|\Sigma(j-1)|$, then, by Lemma \ref{lem:growing-sum},
we have $|\Sigma(j)|\ge\left(1+\frac{1}{2k_{j}}\right)|\Sigma(j-1)|$.
For each positive integer $h$, let 
\[
t_{h}=\frac{1}{2}\min_{B\subseteq A_{i}:|B|\ge|A_{i}|-s/(2\ell)}|2^{h}(B\cup\{0\})|.
\]
If $|\Sigma(j-1)|\le t_{h}$ for some $j<\frac{s}{2C_0\ell}$, then, since $|S_{j-1}| \ge |A_i|-s/2\ell$, we have that $|2^h (S_{j-1} \cup \{0\})| \ge 2t_{h} \ge 2|\Sigma(j-1)|$. Thus, $k_{j}\le 2^h$, so that $|\Sigma(j)|\ge\left(1+\frac{1}{2^{h+1}}\right)|\Sigma(j-1)|$.
Therefore, using that $1+x > 2^{x}$ for $0<x<1$, the number of steps $j\le\frac{s}{2C_0\ell}$ where $|\Sigma(j)|\in[t_{h-1},t_{h})$ is at most $1 + 2^{h+1}\log\frac{t_{h}}{t_{h-1}}$. 

We claim that for $h> h_0 = \frac{\eta}{4} \log m$, $\frac{t_{h}}{t_{h-1}}$ is bounded by a constant depending only on $\eta$ and $\beta$. Indeed, let $d_{h-1}$ be the $2^{h-1}$-dimension
of $A_{i}\cup\{0\}$, which, since $2^{h-1} \ge m^{\eta/4} \ge n^{\eta/4\beta}$,  is bounded by a constant depending only on
$\beta$ and $\eta$ by Lemma \ref{lem:lower-hA}. We have 
\[
t_{h}\le|2^{h}(A_{i}\cup\{0\})|\le2^{hd_{h-1}}|P_{d_{h-1}}(A_{i}\cup\{0\})|.
\]
On the other hand, under the event $\mathcal{E}$, Lemmas \ref{lem:h-subset} and  \ref{lem:lower-hA} imply that
\[
t_{h-1} \gg_{\beta,\eta} |2^{h-1}(A_i\cup\{0\})| \gg_{\beta,\eta} 2^{(h-1)d_{h-1}}|P_{d_{h-1}}(A_{i}\cup\{0\})|.
\]
Thus, 
\[
\frac{t_{h}}{t_{h-1}} \ll_{\beta,\eta} 2^{d_{h-1}} \ll_{\beta,\eta} 1.
\]

Now let $h_{*}$ be such that $\left|\Sigma\left(\frac{s}{2C_0\ell}\right)\right|\in[t_{h_{*}},t_{h_{*}+1})$.
Then we have that
\[
\sum_{0 \le h \le h_{*}+1}\left(1+2^{h+1}\log\frac{t_{h}}{t_{h-1}}\right)\ge\frac{s}{2C_0\ell}.
\]
However, by the claim above, we see that
\begin{align*}
\sum_{0 \le h \le h_{*}+1}\left(1+2^{h+1}\log\frac{t_{h}}{t_{h-1}}\right) &\le (2+h_*) + 2^{h_0+1} \log t_{h_0} + \sum_{h_0<h\le h_*+1} 2^{h+1}\log\frac{t_{h}}{t_{h-1}} \\
&\ll_{\beta,\eta} h_* + 2^{(\eta/4)\log m} \cdot 4\log n + 2^{h_*},
\end{align*}
where we used that $t_{h_0} \le 2^{h_0} n \le n^2$. 
Hence, there is a constant $c(\beta,\eta)>0$ such that $$2^{h_*}+h_*+2^{(\eta/4)\log m}\cdot 4\log n \ge c(\beta,\eta)\frac{s}{2C_0\ell}.$$
Therefore, since $2^{h_*}\ge h_*$, 
\begin{equation}
2^{h_{*}} \ge \frac{c(\beta,\eta)}{2}\left(\frac{s}{2C_0\ell}\right) - 2m^{\eta/4}\log n \ge \frac{c({\beta,\eta})}{8C_0} \frac{s}{\ell} \label{eq:bound-h*}
\end{equation}
for $n$ sufficiently large, where we used that $s \ge m^{\eta}$ and $n\le m^\beta$. 

Hence, with $d_{h_*}$ being the $2^{h_*}$-dimension of $A_i\cup \{0\}$, Lemmas \ref{lem:h-subset} and  \ref{lem:lower-hA} imply that
$$
\left|\Sigma\left(\frac{s}{2C_0\ell}\right)\right| 
\ge t_{h_{*}} 
\gg_{\beta,\eta} 2^{h_*d_{h_*}}|P_{d_{h_*}}(A_i\cup \{0\})|
\gg_{\beta,\eta} \left|\frac{s}{\ell}(\hat{A} \cup \{0\})\right|,
$$
where, in the last inequality, we used \eqref{eq:bound-h*} to conclude that 
$$\left|\frac{s}{\ell}(\hat{A} \cup \{0\})\right| \le  \left(\frac{s}{\ell}\right)^{d_{h_*}} |P_{d_{h_*}}(\hat{A} \cup \{0\})| \ll_{\beta,\eta} 2^{h_*d_{h_*}} |P_{d_{h_*}}(\hat{A} \cup \{0\})|\ll_{\beta,\eta}2^{h_*d_{h_*}}|P_{d_{h_*}}(A_i\cup \{0\})|,$$ 
since, under the event $\mathcal{E}$, 
\[
|P_{d_{h_*}}(A_i\cup \{0\})|=\textrm{Vol}(P_{d_{h_*}}(A_i\cup \{0\})) \ge \frac{3}{4} \textrm{Vol}(P_{d_{h_*}}(\hat{A}\cup \{0\})) \ge \frac{3}{4} |P_{d_{h_*}}(\hat{A}\cup \{0\})|.
\]
Thus, there exists a subset $A_{i}'$ of $A_{i}$ of size at most
$\frac{s}{2C_0\ell}$ such that 
\[
|\Sigma(A'_{i})| \gg_{\beta,\eta} \left|\frac{s}{\ell}(\hat{A}\cup\{0\})\right|.
\]

By Lemma \ref{lem:lower-hA}, 
letting $d$ be the $\frac{s}{\ell}$-dimension
of $\hat{A} \cup\{0\}$, then $|h (\hat{A} \cup\{0\})| \gg_{\beta,\eta} h^d|P_d(\hat{A} \cup\{0\})|$, where $h=\frac{s}{\ell}$. 
Let $\tilde{P}=P_d(\hat{A} \cup\{0\})$. By Claim \ref{claim:center} and Lemma \ref{lem:lower-hA}, we can assume that $\tilde{P}$ is a centered GAP and that $\tilde{c}h\tilde{P}$ is proper for some $\tilde{c}$ depending only on $\beta$ and $\eta$. Let $\phi$ be the identification map $\phi:\tilde{P} \to \mathbb{Z}^{d}$. We then have that $h\phi(\hat{A}\cup \{0\})$ is a subset of $h\phi(\tilde{P})$ with $|h\phi(\hat{A}\cup \{0\})|\gg_{\beta,\eta} |h\phi(\tilde{P})|$ and $0\in h\phi(\hat{A}\cup \{0\})$. 
By the first claim in Corollary \ref{cor:non-reduced}, we have that there exists a $d$-dimensional centered GAP $\overline{Q}$ of dimension $d$ in $\mathbb{Z}^{d}$ with the following properties:
\begin{itemize}
    \item $\overline{Q} \supseteq \langle h\phi(\hat{A}\cup \{0\})\rangle \cap h\phi(\tilde{P})$,  
    \item $\overline{Q}$ is contained in a translate of $C_{\beta,\eta} h\phi(\tilde{P})$, 
    \item $C_{\beta,\eta}h\phi(\hat{A} \cup\{0\})$ contains a translate of $\overline{Q}$ (and so $h\phi(\hat{A} \cup\{0\})$ is reduced in $\overline{Q}$), 
    \item $|\overline{Q}|\le C_{\beta,\eta}|h\phi(\hat{A} \cup\{0\})|$.
\end{itemize}
Since $h\phi(\hat{A}\cup \{0\})\subseteq \overline{Q}$ and $\phi(\hat{A}\cup \{0\})$ is a subset of $\mathbb{Z}^d$, we have $\phi(\hat{A}\cup \{0\}) \subseteq h^{-1}\overline{Q}$. Let $Q=h^{-1}\overline{Q}$. We then have that $Q\supseteq \langle h\phi(\hat{A}\cup \{0\})\rangle \cap \phi(\tilde{P})$, $Q$ is contained in a translate of $C_{\beta,\eta}\phi(\tilde{P})$ and $\phi(\hat{A}\cup \{0\})$ is reduced in $Q$. Furthermore, since $Q$ is contained in a translate of $C_{\beta,\eta}\phi(\tilde{P})$, it follows that $\bar{c}hQ$ is contained in a translate of $\phi(\tilde{c}h\tilde{P})$ for some $\bar{c}$ depending only on $\beta$ and $\eta$ and thus, as $\bar{c}hQ$ is a proper GAP in $\mathbb{Z}^d$ and $\tilde{c}h\tilde{P}$ is proper, 
$\phi^{-1}(\bar{c} hQ)$ is also proper. 

For all $i\in[\ell]$, we have 
\[
\Sigma(A_{i}')\subseteq \phi^{-1}(\frac{s}{2\ell}\phi(\tilde{P})\cap \langle \phi(A_i')\rangle)
\]
and 
\begin{equation}
|\Sigma(A_{i}')| \gg_{\beta,\eta}\left|\frac{s}{\ell}(\hat{A}\cup\{0\})\right| \gg_{\beta,\eta} (s/\ell)^{d}|\tilde{P}|. \label{eq:const-frac}
\end{equation}
Thus, $\langle \phi(A_i')\rangle$ must have index bounded by a constant in $\beta$ and $\eta$ in $\mathbb{Z}^{d}$. Under the event $\mathcal{E}$, since $|A_i'|\le \frac{s}{2C_0\ell}$, we have that $\langle \phi(A_i \setminus A_i')\rangle = \langle \phi(\hat{A})\rangle$. Thus, by greedily choosing elements of $A_i\setminus A_i'$, we obtain a subset $A_i''\subseteq A_i\setminus A_i'$ of size bounded in $\beta$ and $\eta$ such that $\langle \phi(A_i''\cup A_i')\rangle = \langle \phi(\hat{A})\rangle$. Note that $A_i'\cup A_i''\subseteq \hat{A}$ and $|A_i'|\le \frac{s}{2C_0\ell}$, so $\Sigma(A_i'\cup A_i'')\subseteq (\frac{s}{2C_0\ell}+C_{\beta,\eta})(\hat{A}\cup \{0\})$. Then, for $T_i = \phi(\Sigma(A_i' \cup A_i''))$, we have that $T_i\subseteq \frac{s}{2\ell}Q$ and $T_i$ is reduced in $\frac{s}{2\ell}Q$, since $\langle \phi(A_i'\cup A_i'')\rangle =\langle \phi(\hat{A})\rangle$ and $\phi(\hat{A})$ is reduced in $Q$. Furthermore, by (\ref{eq:const-frac}) and since $|Q|\ll_{\beta,\eta}|\phi(P)|$, $T_{i}$ occupies a constant fraction $c'_{\beta,\eta}$ of
$\frac{s}{2\ell}Q$. 
By Lemma \ref{lem:ell-box}, for $\ell$ sufficiently large in terms of $\beta$ and $\eta$, the sum of the sets $T_{i}$ contains a translate of $\gamma\ell\frac{s}{2\ell}Q=\frac{\gamma s}{2}Q$ for some constant $\gamma$ depending only on $c'_{\beta,\eta}$ and $d$.
Hence, $\Sigma(T_1\cup\dots\cup T_{\ell})$ contains a translate
of $\frac{\gamma s}{2}Q$ by an element of $\Sigma(\phi(\hat{A}))\in \langle \phi(\hat{A})\rangle$. 

Since $\phi^{-1}(\bar{c} h Q)$ is proper for some $\bar{c}$ depending on $\beta$ and $\eta$, we obtain that there is a GAP $P:=\phi^{-1}(Q)$ and a subset $A'=\bigcup_{i=1}^{\ell}A_i'\cup A_i''$ of $\hat{A}$ of size at most $s$ such that $\hat{A}\cup \{0\}$ is contained in $P$ and $\Sigma(A')$ contains a homogeneous and proper translate of $csP$, where $c>0$ depends only on $\beta$ and $\eta$. 
\end{proof}

As we will need it for the proof of Theorem~\ref{thm:hom-AP}, we now record a variant of Theorem~\ref{thm:hom-AP-build} where $\hat{A}$ is explicitly stable and resilient. We omit the proof, which is the same as that above, except that we  apply Lemma~\ref{lem:reduction} rather than Lemma~\ref{lem:reduction-stable} at the outset.

\begin{thm} \label{thm:hom-AP-build2} 
For any $\beta>1$, $\epsilon > 0$ and $0<\eta<1$, there are positive constants $c$ and $d$ such that the following holds. Let $A$ be a subset of $[n]$ of size $m$ with $n\le m^{\beta}$ and let $s\in[m^{\eta},cm/\log m]$. Then there exists a subset $\hat{A}$ of $A$ of size at least $cm$ which is both strongly-$(s,\beta)$-stable
and $(\epsilon,\beta)$-resilient, a proper GAP $P$ of dimension at most $d$ such
that $\hat{A} \cup \{0\}$ is contained in $P$ and a subset
$A'$ of $\hat{A}$ of size at most $s$ such that $\Sigma(A')$
contains a homogeneous translate of $csP$, where $csP$ is proper. 
\end{thm}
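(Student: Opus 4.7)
The plan is to imitate the proof of Theorem~\ref{thm:hom-AP-build} verbatim, with the single change that the preprocessing step at the very beginning invokes Lemma~\ref{lem:reduction} in place of Lemma~\ref{lem:reduction-stable}. Concretely, applied with parameter $t = s \log m$, Lemma~\ref{lem:reduction} yields a subset $\hat{A} \subseteq A$ of size at least $c' m - 100 \beta^2 s \log m$ which is simultaneously strongly-$(\frac{t}{\log m}, \beta) = (s, \beta)$-stable and $(\epsilon, \beta)$-resilient, where $c' > 0$ depends only on $\epsilon$ and $\beta$. Since by hypothesis $s \le cm/\log m$, choosing the constant $c$ in the theorem statement sufficiently small (in particular $c \le c'/(200 \beta^2)$) guarantees $|\hat{A}| \ge c'm/2 \ge cm$, meeting the required lower bound on $|\hat{A}|$.

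From this point onward, the entire argument proceeds exactly as in the proof of Theorem~\ref{thm:hom-AP-build}: we randomly partition $\hat{A}$ into $\ell$ pieces $A_1, \dots, A_\ell$, appeal to Lemma~\ref{lem:stable-subsets} (which applies because $\hat{A}$ is strongly-$(s, \beta)$-stable) to conclude that each $A_i$ inherits strong-$(\frac{s}{2\ell}, \beta)$-stability together with the volume and span properties of event $\mathcal{E}$, and then run the greedy iterative procedure for $\frac{s}{2C_0 \ell}$ steps on each $A_i$ to extract $A_i'$ with $|\Sigma(A_i')| \gg_{\beta, \eta} |\frac{s}{\ell}(\hat{A} \cup \{0\})|$. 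The $h$-dimension and bounding-box estimates coming from Lemmas~\ref{lem:h-subset} and~\ref{lem:lower-hA}, the identification of $\Sigma(A_i')$ as a dense reduced subset of a suitable GAP via Corollary~\ref{cor:non-reduced}, and the final summing step via Lemma~\ref{lem:ell-box} all apply without modification, producing the required subset $A' = \bigcup_i (A_i' \cup A_i'')$ of size at most $s$ together with the proper GAP $P$ of bounded dimension containing $\hat{A} \cup \{0\}$ such that $\Sigma(A')$ contains a proper homogeneous translate of $csP$.

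There is essentially no obstacle beyond checking that the constants still line up: Lemma~\ref{lem:reduction} returns a set of slightly smaller relative density than Lemma~\ref{lem:reduction-stable} (a constant fraction rather than $1 - o(1)$), but this is harmless since the conclusion of Theorem~\ref{thm:hom-AP-build2} only asks for $|\hat{A}| \ge cm$ with $c$ depending on the parameters. The resilience hypothesis is not itself used within the proof of Theorem~\ref{thm:hom-AP-build}; it is carried along for the subsequent application to Theorem~\ref{thm:hom-AP}, which is why it suffices to annotate the preprocessing step and leave the rest of the argument intact. For this reason the proof can be omitted, as stated in the paper.
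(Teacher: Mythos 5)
Your proposal is correct and coincides exactly with the paper's intent: the authors state that Theorem~\ref{thm:hom-AP-build2} follows by rerunning the proof of Theorem~\ref{thm:hom-AP-build} with Lemma~\ref{lem:reduction} in place of Lemma~\ref{lem:reduction-stable} at the preprocessing step, which is precisely the substitution you make, and your bookkeeping of the constants ($|\hat{A}| \ge c'm - 100\beta^2 s\log m \ge cm$) is the only additional check needed.
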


\section{Convex geometry and subset sums}

In this section, we show that we can approximate the set of subset sums of a set $A$ by a certain convex polytope and collect several useful properties of this polytope. In the next section, we will then combine the results of this section with Theorem~\ref{thm:hom-AP-build2} to prove Theorem~\ref{thm:hom-AP}.

\begin{defn}
Given a finite subset $A$ of $\mathbb{Z}^{d}$, we define the zonotope ${\cal Z}_{A}$ to be the Minkowski sum of the segments $[0,1]\cdot a$ with $a\in A$. 
\end{defn}

\begin{lem} \label{lem:sampling}
Let $A$ be a subset of a box $Q$ in $\mathbb{Z}^{d}$ with widths $w_{1},\dots,w_{d}$ and $0\in A$ and let ${\cal Z}_{A}$ be the corresponding zonotope.
Then, for any $z\in{\cal Z}_{A}$, there exists a subset sum $s(z)$ of $A$ such that  $|z_{i}-s(z)_{i}|\le \sqrt{d|A|}w_{i}$ for all $i\le d$. 
\end{lem}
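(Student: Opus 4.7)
The plan is to apply the probabilistic method via randomized rounding. Write $z=\sum_{a\in A} t_a a$ with $t_a\in [0,1]$, which is possible by definition of $\mathcal{Z}_A$. Let $\{\epsilon_a\}_{a\in A}$ be independent Bernoulli random variables with $\mathbb{P}(\epsilon_a=1)=t_a$, and set $s = \sum_{a\in A}\epsilon_a a \in \mathbb{Z}^d$. By construction, $s$ is a subset sum of $A$, and linearity of expectation gives $\mathbb{E}[s_i]=z_i$ for every $i\le d$.

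First I would record the coordinatewise bound $|a_i|\le w_i$ for all $a\in A$: since $0\in A\subseteq Q$, we have $0\in Q$, so the interval $[l_i,l_i+w_i-1]$ defining the $i$th side of $Q$ contains $0$, hence $|a_i|\le w_i-1 < w_i$. Using independence of the $\epsilon_a$, this yields
\[
\mathbb{E}\bigl[(s_i-z_i)^2\bigr] = \sum_{a\in A} t_a(1-t_a)\, a_i^2 \le \sum_{a\in A} a_i^2 \le |A|\,w_i^2.
\]

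Now consider the random quantity $Y=\sum_{i=1}^d (s_i-z_i)^2/w_i^2$. From the bound above, $\mathbb{E}[Y]\le d|A|$, so there must exist an outcome of the $\epsilon_a$'s for which $Y\le d|A|$. For this outcome, every individual summand satisfies $(s_i-z_i)^2/w_i^2 \le d|A|$, i.e.\ $|s_i-z_i|\le \sqrt{d|A|}\,w_i$ for all $i\le d$. Taking $s(z):=s$ for this outcome then gives a subset sum with the required property.

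There is no real obstacle here; the argument is a direct second-moment / probabilistic method computation, and the only place where the hypothesis $0\in A$ is used is to guarantee the coordinatewise bound $|a_i|\le w_i$. If one prefers, the probabilistic step can be phrased deterministically as ``the expectation is an average, so some outcome achieves at most the average,'' avoiding any measure-theoretic overhead.
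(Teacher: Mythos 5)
Your proof is correct and takes essentially the same approach as the paper: randomized rounding of the zonotope coefficients followed by a second-moment bound. The one small difference is in how you extract the existence statement at the end — the paper applies Chebyshev's inequality coordinatewise and then a union bound over the $d$ coordinates (getting success probability $\geq 3/4$), whereas you bound $\mathbb{E}[Y]$ for $Y=\sum_i (s_i-z_i)^2/w_i^2$ and observe that some outcome achieves $Y\le d|A|$, which is marginally cleaner and avoids the union bound entirely while yielding the identical conclusion.
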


\begin{proof}
Since $z\in{\cal Z}_{A}$, we can write $z=\sum_{a\in A}z_{a}a$,
where the coefficients $z_{a}$ are in the interval $[0,1]$. 
For each $a$, let $Z_a$ be the random variable which is $1$ with probability $z_a$ and $0$ otherwise, with each $Z_a$ independent of all others. Consider also $Z = \sum_{a\in A} Z_a a$, noting that $\mathbb{E}[Z]=z$. 

We next compute the variance of the $i^{\textrm{th}}$ coordinate of $Z$, obtaining 
\[
\mathbb{E}\left[\left(Z-z\right)_{i}^{2}\right] = \mathbb{E}\left[\left(\sum_{a\in A}(Z_a-z_a)a_i\right)^{2}\right].
\]
Note that $|a_i|\le w_i$ from our assumption that $0\in A\subseteq Q$. By independence of the zero-mean random variables $(Z_a-z_a)a_i$, we have that
\begin{align*}
 \mathbb{E}\left[\left(\sum_{a\in A}(Z_a-z_a)a_i\right)^{2}\right] &= \sum_{a\in A} a_i^2 \mathbb{E}[(Z_a-z_a)^2] =a_i^2 \sum_{a\in A}  z_a(1-z_a) \le w_i^2 |A|/4. 
\end{align*}
Thus, by Chebyshev's inequality, we have 
\[
\mathbb{P}\left(\left|Z_i - z_i\right| \ge \sqrt{d|A|}w_i\right) \le \frac{1}{4d}.
\]
Hence, by the union bound, with probability at least $1-d \cdot \frac{1}{4d}=3/4$, we have that, for all $i\le d$, 
\[
\left|Z_{i}-z_{i}\right|\le\sqrt{d|A|}w_{i}.
\]
Since $Z\in\Sigma(A)$, we have arrived at the desired conclusion. 
\end{proof}

Given a subset $A$ of $\mathbb{Z}^d$, the \emph{dimension} of $A$ is the dimension of the span $\langle A\rangle$ in $\mathbb{R}^d$, while the \emph{affine dimension} of $A$ is the dimension of $A - a$ for any $a  \in A$. The next lemma says that any subset of $\mathbb{Z}^{d}$ with affine dimension $d$ contains a simplex of large volume.

\begin{lem}
\label{lem:simplex-volume}In any set $A$ of $m$ distinct integer
points in $\mathbb{Z}^{d}$ with affine dimension $d$, there exist
$d+1$ points such that the simplex spanned by these points has volume at least $c_{d}m$. 
\end{lem}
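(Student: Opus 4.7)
We argue by induction on $d$, showing the existence of $d+1$ points in $A$ spanning a simplex of $d$-volume at least $c_d m$. The base case $d = 1$ is immediate: $m$ distinct integers have diameter at least $m - 1$, so the two extremes form a 1-simplex of length $\ge m/2$.

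For the inductive step, project $A$ onto the first $d-1$ coordinates via $\pi$, and set $B := \pi(A)$. Since $\ker\pi = \mathbb{R} e_d$ has dimension $1$ and $A$ has affine dimension $d$, the set $B$ has affine dimension $d-1$. By the inductive hypothesis applied to $B$, there exist $q_0,\ldots,q_{d-1} \in B$ spanning a $(d-1)$-simplex of volume $V' \ge c_{d-1}|B|$. For each $i$, pick any preimage $p_i \in A \cap \pi^{-1}(q_i)$.

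Define the integer linear form $L(v) := \det(p_1-p_0,\ldots,p_{d-1}-p_0,v)$; by Laplace expansion along the last column, its last coefficient is $\pm(d-1)!V'$, and its other coefficients are also integers (as cofactor minors of an integer matrix). For any candidate $p_d \in A$, the $d$-simplex on $\{p_0,\ldots,p_d\}$ has volume $|L(p_d-p_0)|/d!$. The crucial observation is that on each fiber $A_b := A \cap \pi^{-1}(b)$, as only the last coordinate varies, $L$ takes $|A_b|$ distinct integer values in arithmetic progression with common difference $(d-1)!V'$. Pigeonhole gives $\max_b|A_b| \ge m/|B|$, so the range of $L$ over $A$ is at least $(m/|B|-1)(d-1)!V'$; choosing the extremal $p_d$ yields a simplex of volume at least $c_{d-1}(m-|B|)/(2d)$. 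This already closes the induction when $|B| \le m/2$, with $c_d = c_{d-1}/(4d)$.

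The main obstacle is the complementary regime $|B| > m/2$, in which nearly all fibers are singletons and the above pigeonhole step becomes vacuous. Here, however, $V' \ge c_{d-1}m/2$ is already substantial, so it suffices to find \emph{any} pair $p_0,p_d \in A$ lying in a common fiber along some primitive direction, since then $p_d - p_0 = h\,e_d$ (in appropriate coordinates) forces $|L(p_d-p_0)| = h(d-1)!V'$ and a simplex of volume $\ge V'/d$. Such a pair is always available after a unimodular change of basis: pick any $a \ne a' \in A$, choose $U \in \mathrm{GL}_d(\mathbb{Z})$ sending the primitive vector $(a-a')/\gcd$ to $e_d$, and then $Ua,Ua'$ differ only in the last coordinate. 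Setting $(p_0,p_d) = (Ua, Ua')$ and lifting a $(d-1)$-simplex from $\pi(UA) \setminus \{\pi(Ua)\}$ via the inductive hypothesis produces the remaining vertices $p_1,\ldots,p_{d-1}$. The only delicacy is ensuring that $\pi(UA) \setminus \{\pi(Ua)\}$ still has affine dimension $d-1$, which holds automatically once $m$ is sufficiently large in terms of $d$; the remaining bounded cases can be absorbed into the constant $c_d$.
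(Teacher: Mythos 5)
Your inductive projection argument is a genuinely different route from the paper's, which takes the maximum-volume simplex $\mathcal{P}$ spanned by $d+1$ points of $A$, shows $A$ is contained in a copy of $2\mathcal{P}$ (each point of $A$ lies on the $\mathcal{P}$-side of the hyperplane through each vertex parallel to the opposite face), and then applies Blichfeldt's lower bound $\mathrm{vol}(\mathrm{conv}(A)) \ge \frac{1}{d!}(m-d)$ to conclude directly. Your Case 1 ($|B|\le m/2$) is correct in substance, although ``arithmetic progression'' is loose: the fiber values of $L$ are distinct multiples of $(d-1)!V'$ after a shift, not necessarily consecutive, and what you actually use is the range bound $(|A_b|-1)\cdot(d-1)!V'$ together with the triangle inequality to compare with $L(p_0)$.

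The complementary regime has genuine gaps. First, for $|L(Ua'-Ua)|$ to pick up the factor $(d-1)!V'$ you need $\pi(Ua)$ to be a vertex of the $(d-1)$-simplex, yet you apply the inductive hypothesis to $\pi(UA)\setminus\{\pi(Ua)\}$, which excludes it, and the vertex count is off (a $(d-1)$-simplex in $\mathbb{Z}^{d-1}$ has $d$ vertices, not $d-1$). Second, and more seriously, you justify $V'\ge c_{d-1}m/2$ from $|B|>m/2$ in the \emph{original} coordinates, but after the unimodular change of basis the inductive hypothesis gives only $V'\ge c_{d-1}|\pi(UA)|$, and $|\pi(UA)|$ is not controlled: with $A=\{(i,0):1\le i\le m-1\}\cup\{(0,1)\}$, $a=(1,0)$, $a'=(2,0)$, one gets $|\pi(UA)|=2$. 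Third, removing $\pi(Ua)$ can drop the affine dimension of the projection regardless of $m$, contrary to what you claim. The route can be repaired, and the repair collapses the dichotomy: the fiber pair $(u,w)$ need not sit over a vertex of the $(d-1)$-simplex, since $|L(u-p_0)|+|L(w-p_0)|\ge|L(u-w)|\ge(d-1)!V'$ forces one of the two candidate simplices to be large, and the unified bound $(\max_b|A_b|-1)(d-1)!V'\ge(d-1)!c_{d-1}\max(|B|,m-|B|)\ge(d-1)!c_{d-1}m/2$ covers every $|B|<m$, with the change of basis invoked only in the degenerate case $|B|=m$ to reduce to $|B|<m$. As written, though, Case 2 does not close the induction.
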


\begin{proof}
We first claim that if ${\cal P}$ is a simplex with maximum volume spanned by $d+1$ points of $A$, then $A$ can be covered by a copy of $2{\cal P}$. Indeed, consider the $d+1$ hyperplanes $H_x$ going through a vertex $x$ of ${\cal P}$ parallel to the face of ${\cal P}$ not containing $x$. Every point of $A$ must lie to the same side of the hyperplane $H_x$ as ${\cal P}$, as otherwise that point together with the vertices of ${\cal P}$ other than $x$ would define a simplex with larger volume than ${\cal P}$. Let ${\cal H}_x$ be the closed half-space containing ${\cal P}$ adjacent to $H_x$. The intersection of the half-spaces ${\cal H}_x$ defines a simplex $\tilde{\cal P}$ isomorphic to $2{\cal P}$, whose vertices are the reflections of each vertex $x$ of ${\cal P}$ about the face of ${\cal P}$ not containing $x$. Since $A$ is a subset of ${\cal H}_x$ for each $x$ in ${\cal P}$, $A$ is also a subset of their intersection $\tilde{\cal P}$, proving the desired claim.

By an old result of Blichfeldt \cite{Bl} (see also \cite{Bar}), the volume of the convex body spanned by a set of $m$ distinct integer points in $\mathbb{Z}^{d}$ with affine dimension $d$ is at least $\frac{1}{d!}(m-d)\ge \frac{1}{(d+1)!}m$, where we used that $m$ must be at least $d+1$. Combining this observation with the above claim, we obtain that $\textrm{vol}(2 {\cal P}) \ge \frac{1}{(d+1)!}m$ for a maximum volume simplex ${\cal P}$ spanned by points in $A$. In particular, ${\cal P}$ has volume at least $\frac{1}{2^d(d+1)!} m \ge c_{d}m$, as desired. 
\end{proof}

We now use this result to derive a lower bound on the volume of the zonotope ${\cal Z}_{A}$ associated with a set $A \subset \mathbb{Z}^d$.

\begin{lem}
\label{lem:zonotope-volume}Let $0<c<1/3$ and suppose $A \subset \mathbb{Z}^d$ has the property that every subset
of $A$ of size at least $c|A|$ has  dimension $d$. Then the volume of the zonotope
${\cal Z}_{A}$ is at least $c'|A|^{d+1}$, where $c'>0$ depends only on $c$ and $d$. 
\end{lem}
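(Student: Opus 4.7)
My plan is to proceed by induction on the dimension $d$, using the classical zonotope-volume identity
\[
\textrm{vol}(\mathcal{Z}_A) = \sum_{B \subset A,\ |B| = d} |\det(B)|,
\]
which follows from the standard tiling of a zonotope by the parallelepipeds spanned by its linearly independent $d$-element subsets. For the base case $d = 1$, the zonotope $\mathcal{Z}_A \subset \mathbb{R}$ is an interval of length $\sum_{a \in A}|a|$; since $c < 1/3$, the hypothesis gives at least $\tfrac{2}{3}|A|$ distinct nonzero integers in $A$, and since each positive integer appears at most twice among their absolute values (once as $+k$, once as $-k$), the sum is bounded below by $2(1 + 2 + \dots + |A|/3) = \Omega(|A|^2)$.

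For the inductive step $d \geq 2$, I plan to reduce to dimension $d - 1$ via projection along primitive vectors $a^* \in A$. For such $a^*$, a change of basis in $\mathrm{SL}_d(\mathbb{Z})$ sending $a^*$ to $e_d$ combined with Laplace expansion along the $a^*$-row yields the identity
\[
\sum_{B \ni a^*,\ |B|=d} |\det(B)| = \textrm{vol}_{d-1}\bigl(\mathcal{Z}_{\pi_{a^*}(A \setminus \{a^*\})}\bigr),
\]
where $\pi_{a^*} \colon \mathbb{Z}^d \to \mathbb{Z}^d/\mathbb{Z} a^* \cong \mathbb{Z}^{d-1}$ is the quotient projection and the right-hand side is the volume of a multiset zonotope (the analogous formula for non-primitive $a^* = g\widetilde{a}^*$ carries an additional integer factor $g$). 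Since each $d$-subset $B$ contains exactly $d$ elements, summing over $a^* \in A$ gives
\[
\textrm{vol}(\mathcal{Z}_A) \geq \frac{1}{d}\sum_{a^* \in A} \textrm{vol}_{d-1}\bigl(\mathcal{Z}_{\pi_{a^*}(A \setminus \{a^*\})}\bigr).
\]

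The core of the argument is to find, for a positive fraction of $a^* \in A$, a subset $\widetilde{A}^{(a^*)} \subset \pi_{a^*}(A\setminus\{a^*\})$ of at least $\alpha|A|$ distinct integer points (for some $\alpha > 3c$) satisfying the hypothesis of the lemma in dimension $d - 1$. The hypothesis on $A$ transfers cleanly: any linear hyperplane $H'$ of $\mathbb{R}^{d-1}$ pulls back through $\pi_{a^*}$ to a linear hyperplane $H$ of $\mathbb{R}^d$ containing $\mathbb{R} a^*$, so $|\widetilde{A}^{(a^*)} \cap H'| \leq |A \cap H| \leq c|A| \leq (c/\alpha)|\widetilde{A}^{(a^*)}|$, and the inductive hypothesis applies with parameter $c/\alpha < 1/3$ to give $\textrm{vol}_{d-1}(\mathcal{Z}_{\widetilde{A}^{(a^*)}}) \geq c''|A|^d$. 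Summing over the $\Omega(|A|)$ good choices of $a^*$ then yields the desired bound $\textrm{vol}(\mathcal{Z}_A) \geq c'|A|^{d+1}$.

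The main obstacle is verifying the existence of enough good $a^*$. Since a fiber of $\pi_{a^*}$ is a coset of $\mathbb{Z} a^*$ intersected with $A$, the condition $|\widetilde{A}^{(a^*)}| \geq \alpha|A|$ is equivalent to the maximum fiber size being at most $1/\alpha$, i.e.\ no line in direction $a^*$ meeting $A$ in more than $1/\alpha$ points. The hypothesis controls only linear hyperplanes, not lines, so this property is not immediate, but I expect to establish it for a positive fraction of primitive $a^* \in A$ via an averaging argument over $a^*$ together with an appeal to Lemma~\ref{lem:simplex-volume} applied to locate sufficiently many linearly independent configurations; a pathological abundance of many-point lines would otherwise force a linear hyperplane violating the hypothesis. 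Bookkeeping for non-primitive $a^*$ via the integer prefactor and balancing the constants $\alpha$ and $c$ through the induction is where the delicacy lies.
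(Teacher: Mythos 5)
Your proposal is a genuinely different route from the paper's, which avoids the zonotope determinant expansion entirely: the paper applies its Lemma~\ref{lem:simplex-volume} (a Blichfeldt-type bound) to iteratively extract $\Omega(|A|)$ disjoint $(d+1)$-point subsets of $A\cup\{0\}$, each spanning a simplex of volume $\Omega(|A|)$; since the remaining set has size $\geq c|A|$ throughout, the hypothesis ensures the process runs long enough, and the Minkowski sum of the resulting simplices sits inside $\mathcal{Z}_A$, whence Brunn--Minkowski gives the bound in a few lines. Your approach instead uses the classical identity $\textrm{vol}(\mathcal{Z}_A)=\sum_{|B|=d}|\det B|$ and inducts on $d$ by projecting along $a^*\in A$. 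The Laplace-expansion reduction and the transfer of the spanning hypothesis to the quotient are correct as stated, and the base case is fine (though the $\tfrac23|A|$ nonzero-elements bound comes simply from $A$ being a set, not from the hypothesis, which is vacuous in dimension $1$).

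The genuine gap, which you flag yourself, is the existence of a positive fraction of \emph{good} directions $a^*\in A$, i.e., ones for which $\pi_{a^*}(A\setminus\{a^*\})$ contains at least $\alpha|A|$ \emph{distinct} points. The hypothesis of the lemma bounds the size of $A$ intersected with any linear hyperplane; it says nothing directly about fibers of $\pi_{a^*}$, which are lines parallel to $a^*$. A large zonotope volume on the right-hand side of your identity can in principle be carried by multiset projections with few distinct points but high multiplicity (for instance, for $A=[N]^d$ the direction $a^*=e_1$ gives only $N^{d-1}=|A|^{(d-1)/d}$ distinct points, and high multiplicities), so the inductive hypothesis cannot be applied to such terms. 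You would need to prove that an abundance of such bad directions forces a large subset of $A$ into a hyperplane; this is a substantive structural claim, not a routine averaging step, and the sketch via ``locating independent configurations with Lemma~\ref{lem:simplex-volume}'' does not yet amount to an argument. Without this, the induction does not close. If you want to pursue this route, that lemma is the one step to nail down precisely; as it stands, the paper's simplex-extraction argument is both complete and considerably shorter.
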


\begin{proof}
By Lemma \ref{lem:simplex-volume}, there exist $d+1$ points in
$A\cup \{0\}$ spanning a simplex with volume at least $c_{d}m$. Let $A_{1}$ be obtained
from $A$ by removing these points. 
We then repeat this process, stopping only when the dimension of the remaining points is less than $d$. 
By assumption, we can repeat this process for at least
$(1-c)|A|/(d+1)-1$ steps. This yields $(1-c)|A|/(d+1)-1$ simplices, each of volume at least $c_{d}c|A|$, such that ${\cal Z}_{A}$ contains the Minkowski sum of these simplices. By the Brunn--Minkowski inequality, we
see that the volume of ${\cal Z}_{A}$ is at least 
\[
\left(\left(\frac{(1-c)|A|}{d+1}-1\right)\cdot\left(c_{d}c|A|\right)^{1/d}\right)^{d}\ge c'|A|^{d+1}
\]
for an appropriate $c'>0$, as required.
\end{proof}

The final ingredient we will need is the following result of Tao and Vu~\cite[Theorem~3.36]{TV}. 
Recall that a subset $A$ of $\mathbb{R}^{d}$ is \emph{symmetric} if $A=-A$. Moreover, a \emph{lattice of rank $r$} in $\mathbb{R}^{d}$ is a discrete additive subgroup of $\mathbb{R}^{d}$ generated by $r$ linearly independent vectors. 

\begin{lem}\label{lem:TV-convexbody}
Let $B$ be a convex symmetric body in $\mathbb{R}^d$ and let $\Gamma$ be a lattice in $\mathbb{R}^d$ of rank $r$. Then there exists an $r$-tuple $w=(w_1,\dots,w_r)\in \Gamma^r$ of linearly independent vectors in $\Gamma$ and an $r$-tuple $N=(N_1,\dots,N_r)$ of positive integers such that 
\[
(r^{-2r} B)\cap \Gamma \subseteq (-N,N)\cdot w \subseteq B\cap \Gamma \subseteq (-r^{2r}N,r^{2r}N)\cdot w,
\]
where $(-N,N) \cdot w = \{\sum_{i=1}^{r}x_iw_i\,\,:\,\, x_i \in (-N_i,N_i) \cap \mathbb{Z}\}$. 
\end{lem}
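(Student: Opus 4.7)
The plan is to prove this as a discrete John-type theorem via Minkowski's theorem on successive minima. First, I would introduce the successive minima $\lambda_1 \le \lambda_2 \le \cdots \le \lambda_r$ of $B$ relative to $\Gamma$, where $\lambda_i$ is the smallest $\lambda>0$ such that $\lambda B \cap \Gamma$ contains $i$ linearly independent vectors, and fix linearly independent $v_i \in \lambda_i B \cap \Gamma$ realizing these minima. By Minkowski's second theorem, the product $\lambda_1 \cdots \lambda_r$ is comparable, up to factors depending only on $r$, to the ratio $\mathrm{covol}(\Gamma)/\mathrm{vol}_r(B\cap \mathrm{span}(\Gamma))$, and this is the main quantitative input I intend to use.

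The next step is to convert the $v_i$ into a genuine $\mathbb{Z}$-basis $w_1,\dots,w_r$ of $\Gamma$ whose $B$-norms remain controlled. Iteratively, I would let $\Gamma_i := \Gamma \cap \langle v_1,\dots,v_i\rangle_{\mathbb{R}}$ and choose $w_i \in \Gamma_i$ so that $w_1,\dots,w_i$ form a $\mathbb{Z}$-basis of $\Gamma_i$. A reduction-modulo-$\langle w_1,\dots,w_{i-1}\rangle_{\mathbb{Z}}$ argument, combined with the definition of $\lambda_i$, shows $w_i$ can be taken in $C_r \lambda_i B$ for an $r$-dependent constant $C_r$ absorbed later into the factor $r^{2r}$.

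Then I would set $N_i := \max(1,\lfloor c_r/\lambda_i \rfloor)$ for a sufficiently small constant $c_r$ depending only on $r$. The middle inclusion $(-N,N)\cdot w \subseteq B \cap \Gamma$ is immediate from convexity and symmetry: if $|x_i|<N_i$, then $\sum_i x_i w_i \in \bigl(\sum_i |x_i|\, C_r\lambda_i\bigr) B \subseteq B$ provided $rC_rc_r \le 1$. The outermost inclusion $B\cap \Gamma \subseteq (-r^{2r}N, r^{2r}N)\cdot w$ is the most delicate: since $w_1,\dots,w_r$ generate $\Gamma$ over $\mathbb{Z}$, any $v \in B \cap \Gamma$ can be written as $v = \sum_i a_i w_i$ with $a_i \in \mathbb{Z}$, and I would bound $|a_i|$ by $r^{2r} N_i$ via a Cramer's-rule computation, using that $v$ has unit $B$-norm while $w_j$ has $B$-norm $\asymp \lambda_j$ and invoking Minkowski's second theorem to convert the resulting determinant ratio into a $1/\lambda_i$ bound. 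Finally, the innermost inclusion $r^{-2r}B \cap \Gamma \subseteq (-N,N)\cdot w$ follows by applying the same expansion argument to $r^{-2r}B$ in place of $B$, so that the coefficients shrink by the corresponding factor and fit inside $(-N,N)$.

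The main obstacle will be the step from the successive-minima directions $v_i$ to a $\mathbb{Z}$-basis $w_i$ of $\Gamma$ with $B$-norms still $O_r(\lambda_i)$. Without this correction, the outermost inclusion fails outright: $B\cap \Gamma$ may contain lattice points that are not integer combinations of the $v_i$ at all, no matter how large $(-N,N)\cdot v$ is chosen. The polynomial-in-$r$ factor $r^{2r}$ in the conclusion is essentially the combined cost of this basis correction and of the Cramer-type bound used in the outermost inclusion.
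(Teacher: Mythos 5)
The paper does not give a proof of this lemma; it is quoted directly from Tao and Vu's \emph{Additive Combinatorics} as Theorem~3.36, with no argument supplied. Your sketch reconstructs the standard proof of the discrete John theorem --- successive minima, a Mahler-type reduction to a genuine $\mathbb{Z}$-basis with $\|w_i\|_B \le \max(1,i/2)\lambda_i$, Minkowski's second theorem, and a Cramer's-rule expansion --- which is precisely the route taken in that reference, so the approach is correct and effectively the same as the cited one. The one place worth spelling out is the constant: with $c_r \asymp r^{-2}$, the combined loss from the Mahler step and the two determinant estimates works out to roughly $(r!)^2 r^{O(1)}$, and Stirling's bound $r! \le e\, r^{r+1/2} e^{-r}$ confirms this is dominated by $r^{2r}$, so the stated factor does emerge from your argument as sketched.
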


We now come to the main result of this section. This says that given a dense subset $A$ of a box $Q$ in $\mathbb{Z}^d$, if we add a box $Q'$ that is not too large to $\Sigma(A)$, we cover all integer points in a neighborhood of the zonotope $\mathcal{Z}_A$. This then allows us to show the existence of a large GAP inside $\Sigma(A)+Q'$. 

\begin{lem} \label{lem:build-GAP}
For any positive integer $d$ and any $0<c<1/3$, there exist $c'', c'''>0$ such that the following holds. Let $A$ be a subset of a box $Q$ in $\mathbb{Z}^{d}$ with widths
$w_{1},\dots,w_{d}$ and $0\in A$ such that the dimension
of any subset of $A$ of size at least $c|A|$ is $d$. 
Let $Q'$ be a box in $\mathbb{Z}^{d}$ with widths $w_{1}',\dots,w_{d}'$ such that $Q'$ is symmetric around $0$ and $w_{i}'\ge 8\sqrt{d|A|}w_{i}$. 
Then $\Sigma(A)+Q'$ contains all integer points in ${\cal Z}_{A} + \mathrm{conv}(c''Q')$.
Furthermore, $\Sigma(A)+Q'$ contains a translate of a GAP $P$ of size at least $c'''\max(|Q'|,|A|^{d+1})$ such that the affine span of $P$ is $\mathbb{R}^{d}$ 
and $P$ contains  $c'''Q'$ and $c'''\tilde{\mathcal{Z}} \cap \mathbb{Z}^d$, where $\tilde{\mathcal{Z}}$ is a translate of the zonotope of a subset $A^*$ of $A$ of size at least $|A|-2^d$. 
\end{lem}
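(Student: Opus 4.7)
The first claim will follow directly from Lemma \ref{lem:sampling}. I will fix $c''=1/4$, and for any integer point $z=z_1+z_2\in \mathcal{Z}_A+\mathrm{conv}(c''Q')$ with $z_1\in \mathcal{Z}_A$ and $z_2\in \mathrm{conv}(c''Q')$, I will invoke the lemma to produce $s(z_1)\in \Sigma(A)$ with $|z_{1,i}-s(z_1)_i|\le \sqrt{d|A|}w_i\le w_i'/8$. Then $z-s(z_1)\in \mathbb{Z}^d$ has each coordinate bounded by $w_i'/8+c''w_i'/2\le w_i'/4$ and therefore lies in $Q'$, giving $z\in \Sigma(A)+Q'$.

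For the GAP, I will begin with a parity reduction. Partitioning $A$ into the at most $2^d$ classes given by $a\bmod 2\in \mathbb{F}_2^d$ and removing one element (not $0$) from each class of odd cardinality produces $A^*\subseteq A$ with $0\in A^*$, $|A\setminus A^*|\le 2^d$, and $\sigma:=\tfrac{1}{2}\sum_{a\in A^*}a\in \mathbb{Z}^d$. The centered zonotope $\tilde{\mathcal{Z}}:=\mathcal{Z}_{A^*}-\sigma$ is symmetric about $0$, so $B:=\tilde{\mathcal{Z}}+\mathrm{conv}(c''Q')$ is a symmetric convex body. I will assume $|A|$ is sufficiently large in $c$ and $d$ (the bounded case being handled trivially by $P=Q'$), so that any subset of $A^*$ of size at least $c'|A^*|$ with $c':=(c+1/3)/2<1/3$ is a subset of $A$ of size at least $c|A|$ and hence has dimension $d$. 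Applying the first containment to $A^*$ then yields $\Sigma(A^*)+Q'\supseteq (B+\sigma)\cap \mathbb{Z}^d=(B\cap \mathbb{Z}^d)+\sigma$, using $\sigma\in \mathbb{Z}^d$.

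I will then invoke the discrete John-type Lemma \ref{lem:TV-convexbody} with $B$ and $\Gamma=\mathbb{Z}^d$ to obtain a $d$-dimensional GAP $P$ satisfying $d^{-2d}B\cap \mathbb{Z}^d\subseteq P\subseteq B\cap \mathbb{Z}^d$, from which $P+\sigma\subseteq \Sigma(A^*)+Q'\subseteq \Sigma(A)+Q'$. Choosing $c'''\le d^{-2d}c''/2$, the inclusions $c'''Q'\subseteq d^{-2d}\mathrm{conv}(c''Q')\subseteq d^{-2d}B$ and $c'''\tilde{\mathcal{Z}}\subseteq d^{-2d}\tilde{\mathcal{Z}}\subseteq d^{-2d}B$ will give $P\supseteq c'''Q'$ and $P\supseteq c'''\tilde{\mathcal{Z}}\cap \mathbb{Z}^d$; the hypothesis $w_i'\ge 8\sqrt{d|A|}w_i$ guarantees $c'''w_i'\ge 2$, so $P$ has affine span $\mathbb{R}^d$.

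The hardest step will be the size bound $|P|\gg_{c,d}\max(|Q'|,|A|^{d+1})$. Tao--Vu's complementary inclusion $B\cap \mathbb{Z}^d\subseteq d^{2d}\cdot P$ yields $|P|\gg_d |B\cap \mathbb{Z}^d|$, reducing the task to a lower bound on $|B\cap \mathbb{Z}^d|$. The bound $|B\cap \mathbb{Z}^d|\ge |c''Q'\cap \mathbb{Z}^d|\gg_d |Q'|$ is immediate from $B\supseteq c''Q'$. For the $|A|^{d+1}$ bound I will use a Fubini double-count, integrating $|(x+\mathrm{conv}(c''Q'/2))\cap \mathbb{Z}^d|$ over $x\in \tilde{\mathcal{Z}}$. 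The hypothesis $w_i'\ge 8\sqrt{d|A|}w_i$ will ensure each fiber $x+\mathrm{conv}(c''Q'/2)$ is thick enough in every coordinate direction to contain $\gg_d \mathrm{vol}(\mathrm{conv}(c''Q'/2))$ lattice points, so the integral is $\gg_d \mathrm{vol}(\tilde{\mathcal{Z}})\cdot \mathrm{vol}(\mathrm{conv}(c''Q'/2))$; swapping the order of integration, every contributing lattice point lies in $B$, so the integral is also at most $|B\cap \mathbb{Z}^d|\cdot \mathrm{vol}(\mathrm{conv}(c''Q'/2))$. This yields $|B\cap \mathbb{Z}^d|\gg_d \mathrm{vol}(\tilde{\mathcal{Z}})$, which is $\gg_{c,d}|A|^{d+1}$ by Lemma \ref{lem:zonotope-volume} applied to $A^*$.
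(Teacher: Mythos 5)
Your proof follows the paper's overall structure --- Lemma \ref{lem:sampling} for the first containment, a parity reduction producing $A^*$ whose zonotope has a symmetric integer translate $\tilde{\mathcal{Z}}$, and the Tao--Vu discrete John theorem applied to $B = \tilde{\mathcal{Z}} + \mathrm{conv}(c''Q')$ --- but you handle the lattice-point lower bound differently. The paper invokes van der Corput's refinement of Minkowski's theorem on the contracted body $c_dB$ and lower-bounds $|P|$ via the inner Tao--Vu inclusion $P\supseteq(c_dB)\cap\mathbb{Z}^d$, whereas you lower-bound $|B\cap\mathbb{Z}^d|$ directly by a self-contained Fubini double-count (integrating the number of lattice points in $x+\mathrm{conv}(c''Q'/2)$ over $x\in\tilde{\mathcal{Z}}$) and then pass to $|P|$ via the outer Tao--Vu inclusion. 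Your route is more elementary, avoiding the appeal to van der Corput, and is equally correct. One small point to tidy in your parity reduction: the residue class containing $0$ contributes $0$ to $\sum_{a\in A^*}a$ modulo $2$ whatever its parity, so it should simply be left untouched; as written, your prescription to remove a non-zero element would be impossible to execute if that class consists of $0$ alone, and is in any case unnecessary. Restricting removal to classes $C_p$ with $p\ne 0$ deletes at most $2^d-1$ elements and automatically keeps $0\in A^*$, which you need because you apply Lemma \ref{lem:sampling} to $A^*$ rather than to $A$ (the paper applies it to $A$ and uses $\mathcal{Z}_{A^*}\subseteq\mathcal{Z}_A$ to land in $\Sigma(A)+Q'$, which sidesteps needing $0\in A^*$).
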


\begin{proof} 
From Lemma \ref{lem:sampling}, we have that for all points $z$ in
${\cal Z}_{A}$, there exists a point $\tilde{z}\in\Sigma(A)$ such
that 
\[
|z_{i}-\tilde{z}_{i}|\le\sqrt{d|A|}w_{i}.
\]
Thus, provided $c'' \leq 1/4$, for any $y=z+t$ with $t\in \textrm{conv}(c'' Q')$ and $y\in \mathbb{Z}^{d}$, we can find $\tilde{z} \in \Sigma(A)$ with
\[
|y_{i}-\tilde{z}_{i}|\le \sqrt{d|A|}w_{i} + c'' w_i' \le 3w_i'/8.
\]
Hence, 
\[
y \in\Sigma(A)+Q'.
\]

Since any subset of $\mathbb{Z}_2^d$ of size at least $d+1$ contains a non-empty subset with zero sum, we can iteratively remove non-empty subsets of $A$ with zero sum until we are left with at most $d+1$ elements. Hence, there is a subset $A^*$ of $A$ with size at least $|A|-(d+1)$ where $\sum_{a\in A^*}a/2 \in \mathbb{Z}^d$. But ${\cal Z}_{A^*} - \sum_{a\in A^*}a/2$ is symmetric about $0$, so that ${\cal Z}_{A^*}$ has an integer translate $\tilde{\cal Z}$ which is symmetric around $0$. Furthermore, by Lemma \ref{lem:zonotope-volume}, the volume of ${\cal Z}_{A^*}$ and, hence, that of $\tilde{\cal Z}$ is at least $c'|A^*|^{d+1} \ge \tilde{c}'|A|^{d+1}$.  

By Lemma \ref{lem:TV-convexbody}, 
there is a GAP $P$ such that $P\subseteq(\textrm{conv}(c''Q')+\tilde{\cal Z})\cap\mathbb{Z}^{d}$
and $P\supseteq(c_{d}(\textrm{conv}(c''Q')+\tilde{\cal Z}))\cap\mathbb{Z}^{d}$ for some $c_d > 0$. Since the volume of $\tilde{\cal Z}$ is at least $\tilde{c}'|A|^{d+1}$, the volume of $c_d(\textrm{conv}(c''Q')+\tilde{\mathcal{Z}})$ is at least $c_1'\max(|Q'|,\textrm{Vol}(\tilde{\cal Z})) \ge c_1'' \max(|Q'|,|A|^{d+1})$. 
Hence, by a variant of Minkowski's convex body theorem due to van der Corput~\cite{vdC} saying that a symmetric convex body in $\mathbb{R}^d$ of volume larger than $2^dk$ contains at least $k$ integer points, 
the number of integer points in the symmetric convex body $c_d(\textrm{conv}(c''Q')+\tilde{\mathcal{Z}})$ 
is at least $c_1'''\max(|Q'|,|A|^{d+1})$. Note that the affine span of $(c_{d}(\textrm{conv}(c''Q')+\tilde{\cal Z}))\cap \mathbb{Z}^{d}$ is $\mathbb{R}^d$ and $(c_{d}(\textrm{conv}(c''Q')+\tilde{\cal Z})) \cap \mathbb{Z}^{d}$ contains a translate of both $c_2'''Q'$ and $c_2'''\tilde{\mathcal{Z}} \cap \mathbb{Z}^d$. We thus have that $\Sigma(A)+Q'$ contains a translate of a GAP $P$ of size at least $c'''\max(|Q'|,|A|^{d+1})$ 
whose affine span is $\mathbb{R}^{d}$ and $P$ contains $c'''Q'$ and $c'''\tilde{\mathcal{Z}}\cap \mathbb{Z}^d$, where $c'''=\min(c_1''',c_2''')$. 
\end{proof}

\section{Proof of Theorem \ref{thm:hom-AP}}

We are now in a position to prove Theorem \ref{thm:hom-AP}. In this section, 
for simplicity of notation,
we will often use the same symbol $c_f$ for different constants that depend on a particular parameter $f$, but allowing the value to change from line to line. 

\begin{proof}[Proof of Theorem \ref{thm:hom-AP}]
Let $\beta = k$, let $\epsilon$ be a constant which is sufficiently small in terms of $\beta$ and let $s=\frac{m}{(\log m)^{2}}$. 
By Theorem \ref{thm:hom-AP-build2}, 
we can find a subset $\hat{A}$ of $A$ with $|\hat{A}| \ge c|A|$ which is $(\epsilon,\beta)$-resilient and strongly-$(s,\beta)$-stable, a centered GAP $P$ with dimension $d$ bounded in terms of $\beta$ such that
$\hat{A} \cup\{0\}$ is contained in $P$ and a subset $A'$
of $\hat{A}$ of size at most $s$ 
such that $\Sigma(A')$
contains a proper homogeneous translate
of $c_{\beta}sP$. 
Without loss of generality, we can assume that $P$ is symmetric, noting that, since $P$ is centered, this extends each of the widths of $P$ by at most a factor of $2$. Since $c_{\beta}sP$ is proper and $c_{\beta}sP$
is contained in a translate of $\Sigma(A')$, which is itself a subset of $[0,ns]$, we have 
\begin{equation}\label{eq:sizebound}
cc_d(c_\beta s)^d m \le c_{d}\left(c_{\beta}s\right)^{d}|P|\le\left|c_{\beta}sP\right|\le  ns+1 \le m^{\beta+1},
\end{equation}
where the first inequality follows since $P$ contains $\hat{A}\cup \{0\}$ and the second inequality follows from Lemma \ref{lem:GAP-size}. 
If $d>\beta$, we would then have that $d\ge \lfloor \beta \rfloor +1$, as $d$ is an integer. But, by (\ref{eq:sizebound}), this implies that $m^{d-\beta}\le (cc_d c_\beta^d)^{-1} (\log m)^{2d}$, which is false for $n$ sufficiently large. Therefore, $d \leq \beta = k$.

Suppose $P=\{\sum_{i=1}^{d} n_iq_i\,:\,n_i\in [a_i,b_i]\}$ with $b_i=-a_i$ (as we are assuming $P$ is symmetric) 
and $\phi$ is the identification map $\phi:c_\beta sP\to \mathbb{Z}^{d}$. 
Consider the map $\psi:\mathbb{Z}^{d}\to \mathbb{Z}$ given by $\psi(n_1,\dots,n_d) = \sum_{i=1}^{d}n_iq_i$. Since 
$\Sigma(A')$ contains a translate of $c_\beta sP$,  $\psi(\Sigma(\phi(\hat{A}\setminus A'))+\phi(c_\beta sP))$ is contained in a translate of $\Sigma(\hat{A})$. By Lemma~\ref{lem:GAP-size}, $|s(\hat{A}\cup \{0\})| \le s^{d}|P_d(\hat{A}\cup \{0\})|$ and, on the other hand, $|s(\hat A\cup \{0\})|\ge |\Sigma(A')|\ge |c_\beta sP| \gg_\beta s^{d}|P|$. Hence, we have $|\phi(P)| =|P| \ll_{\beta} |P_d(\hat A \cup \{0\})|$, so, provided 
$\epsilon$ is sufficiently small in terms of $\beta$, we may apply Corollary \ref{cor:dim-d} to conclude that, under the map $\phi$, any subset of $\hat{A}$ of size at least $|\hat{A}|/100$ has dimension $d$. 
Thus, we have that $\phi(\hat{A}\cup \{0\})$ is a subset of a box $\phi(P)$ in $\mathbb{Z}^d$ with widths $2b_1+1,\dots,2b_d+1$ and under $\phi$ the dimension of any subset of $\hat{A}\cup \{0\}$ with size at least $|\hat{A}|/100$ is $d$. Moreover, $\phi(c_\beta sP)$ is a box with widths at least $c_\beta sb_1,\dots,c_\beta sb_d$, where $c_\beta sb_i \ge 8\sqrt{d(|\hat{A}|+1)}(2b_i+1)$. 
Hence, by Lemma~\ref{lem:build-GAP}, 
we obtain that $\Sigma(\phi(\hat{A}\setminus A'))+\phi(c_\beta sP)$
contains a GAP $P'$ whose affine span is $\mathbb{R}^d$, whose size is at least $c_{d}\max(|A|^{d+1},|c_\beta sP|)$ (here we use that $c_\beta sP$ is proper, so that $|\phi(c_{\beta} sP)|=|c_{\beta} sP|$) and where $P'$ contains a translate of $\phi(c_{d,\beta}sP)$ and $c_{d}\mathcal{Z}\cap \mathbb{Z}^{d}$, where $\mathcal{Z}$ is a translate of the zonotope of a subset of $\phi(\hat{A}\setminus A')$ 
of size at least $|\hat{A}\setminus A'|-2^d$. 

Thus, $\psi(P')$ is a GAP of dimension $d$ in $\mathbb{Z}$ with volume at least $c_{d}|A|^{d+1}$ and size at least $|c_{d,\beta}sP|\gg_{d,\beta} s^dm$. 
Furthermore,
$\psi(P')$ is homogeneous, since $P'$ is contained in $\Sigma(\phi(\hat{A}\setminus A'))+\phi(c_\beta sP)$, $P'$ contains a translate of $\phi(c_{d,\beta}sP)$ and $\psi(\hat{A}\cup \{0\})\subseteq \psi(P)$ and, hence, $\gcd(P')=\gcd(P)\mid \gcd(\hat{A} \cup \{0\})$. 
If $\psi(P')$ is not proper,  Lemma
\ref{lem:non-proper} implies that either $\psi(P')$ contains a proper
homogeneous GAP of dimension at most $d$ and size at least $c_{d}|A|^{d+1}$ or $\psi(P')$
contains a homogeneous GAP of dimension at most $d-1$ and size at least $c_{d,\beta} s^d m$. 

First, consider the case where $\psi(P')$ contains a homogeneous GAP of dimension at most $d-1$ and size at least
$c_{d,\beta} s^d m$. 
By repeated further applications of Lemma~\ref{lem:non-proper}, we may conclude that, for some $d'\in [1,d-1]$, $\psi(P')$ contains a proper $d'$-dimensional homogeneous GAP of size at least $ c_{d,\beta}s^{d}m>m^{d+1}/(\log m)^{2d+1}>m^{d'+1}$, 
as required. Moreover, the same conclusion holds if $\psi(P')$ contains a proper homogeneous GAP of dimension at most $d-1$ and size at least $c_d |A|^{d+1}$. 

Finally, consider the case where $\psi(P')$ contains a proper $d$-dimensional
homogeneous GAP of size at least $c_{d}|A|^{d+1}$. We have $\psi(P')\subseteq [0,mn]$ as $\psi(P')$ is contained in $\Sigma(\hat{A})$, so 
\[
c_{d}|A|^{d+1}\le mn+1,
\]
which implies that $d<\beta$ if $m\ge C_{\beta}n^{1/\beta}$
for sufficiently large $C_{\beta}$. Since $d$ is an integer, $d \le \lceil \beta\rceil - 1 = k-1$. Thus, the conclusion of the theorem holds in all cases. 
\end{proof}

\noindent \textbf{Remark.}
We can guarantee that $\psi(P')$ contains either a proper  $d'$-dimensional homogeneous GAP of size at least $m^{d+1}/(\log m)^{2d+1}$ for some $d'<d$ or a proper $d$-dimensional homogeneous GAP of size at least $c_d |A|^{d+1}$ and minimum width at least $c_{d,\beta}|A|$. Indeed, recall that $\psi(P')$ contains a translate of $c_{d,\beta}sP$, where $c_{d,\beta} sP$ is proper, and $\psi(P')$ is contained in $mP$. In particular, $(c_{d,\beta}s/m) \psi(P')$ is proper. 
Furthermore, the minimum width of $(c_{d,\beta} s/m) \psi(P')$ is at least $c_{d,\beta} s/m \cdot c_{d,\beta}s > m/(\log m)^5$, where we use that $\psi(P')$ contains a translate of $c_{d,\beta} sP$ to conclude that it has minimum width at least $c_{d,\beta}s$. 
Therefore, in the proof of Lemma \ref{lem:non-proper}, we can check that only Case 1 can occur and, thus, either $\psi(P')$ contains a proper $d'$-dimensional homogeneous GAP of size at least $m^{d+1}/(\log m)^{2d+1}$ for some $d'<d$ or it contains a proper homogeneous translate of $c_d\psi(P')$. 

Hence, it remains to verify that the minimum width of $\psi(P')$ is at least $c_{d,\beta} |A|$, which implies that the minimum width of $c_d \psi(P')$ 
is at least $c_{d,\beta} |A|$. For this, we note that $P'$ contains $c_{d}\mathcal{Z}\cap \mathbb{Z}^{d}$, where $\mathcal{Z}$ is a translate of the zonotope of a subset $A_*$ of $\hat{A}\setminus A'$ of size at least $|\hat{A}\setminus A'|-2^d$. Write $P'=\{\sum_{i=1}^{d}n_i p_i:n_i \in I_i\}$ and assume, without loss of generality, that $|I_1|$ is the minimum of the $|I_i|$. Note that $p_1,\dots,p_d$ form a basis of $\mathbb{Z}^d$ and define projection maps $\pi_i:\mathbb{Z}^d\to \mathbb{Z}$ by $\pi_i(x)=n_i$ if $x=\sum_{i=1}^{d}n_ip_i$. Recall, from our application of Corollary \ref{cor:dim-d}, that any subset of $\hat{A}$ of size at least $|\hat{A}|/100$ has full dimension under $\phi$. 
Thus, $A_*$ contains at least $c_\beta |A|$ elements $x$ with $\pi_1(x)\ne 0$. We then obtain that $\max(\pi_1({\mathcal{Z}}))-\min(\pi_1({\mathcal{Z}})) \ge c_\beta |A|$ 
and, hence, since $P'$ contains $c_{d}\mathcal{Z}\cap \mathbb{Z}^d$, we have that $|I_1| \geq c_{d,\beta}|A|$, as required.

\section{Maximum non-averaging sets}

In this section, we prove Theorem \ref{thm:non-avg}, the main tool
being Theorem \ref{thm:hom-AP-build}. Let $\tilde{H}(n)$ be the maximum integer
for which there are two non-averaging subsets $A$ and $\tilde{A}$ of $[n]$ of size $\tilde{H}(n)$ with $\max(A)<\min(\tilde{A})$
whose sets of subset sums have no non-zero common element. 
As for the function $H(n)$ (see \cite[Corollary~1.10]{CFP} and its proof), we can show that 
\begin{equation}\label{usefulineq}
h(n)\le2\tilde{H}(n)+2.
\end{equation} 
Moreover, 
\begin{equation}\label{nextineq} 
\tilde{H}(n)\le H(n)\le Cn^{1/2}.
\end{equation} 

To prove Theorem \ref{thm:non-avg}, it thus suffices to prove the following result. 

\begin{thm}
There is an absolute constant $C$ such that, for all $n \geq 2$,
\begin{equation}\label{lastthm} 
\tilde{H}(n) \leq Cn^{\sqrt{2}-1}(\log n)^{2}.
\end{equation} 
\end{thm}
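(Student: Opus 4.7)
The plan is to prove \eqref{lastthm} by strong induction on $n$, assuming the claimed bound for all $n' < n$, and to derive a contradiction from the assumption that $m = \tilde{H}(n) > Cn^{\sqrt{2}-1}(\log n)^{2}$ for a sufficiently large constant $C$. So let $A, \tilde{A} \subseteq [n]$ be non-averaging sets of size $m$ with $\max A < \min \tilde{A}$ and $\Sigma(A) \cap \Sigma(\tilde{A}) = \{0\}$. Since $m \geq Cn^{\sqrt{2}-1}$ we have $n \leq m^{\beta}$ for $\beta = \sqrt{2}+1$, so I apply Theorem~\ref{thm:hom-AP-build} to $\tilde{A}$ with this $\beta$, a small $\eta$, and $s = cm/\log m$. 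This produces $\hat{A} \subseteq \tilde{A}$ with $|\hat{A}| \geq m/2$, a proper GAP $P$ of some dimension $d$ containing $\hat{A} \cup \{0\}$, and $A' \subseteq \hat{A}$ of size at most $s$ with $\Sigma(A')$ containing a proper homogeneous translate of $csP$. Note that since $0 \in P$, $P$ is itself homogeneous. The containment $\Sigma(A') \subseteq [0, sn]$ together with Lemma~\ref{lem:GAP-size} gives $(cs)^{d}|P| \lesssim sn$, and combined with $|P| \geq |\hat A|/1 \geq m/2$ this yields $s^{d-1}m \lesssim n$. For $d=3$ this gives $m \lesssim n^{1/3}(\log n)^{2/3}$, contradicting $m > Cn^{\sqrt{2}-1}(\log n)^{2}$ since $\sqrt{2}-1 > 1/3$. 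Thus $d \in \{1,2\}$.

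The main case is $d = 2$. Writing $P$ with widths $w_{1}, w_{2}$ and differences $q_{1}, q_{2}$, each one-dimensional fiber of $P$ (obtained by fixing one coordinate) is, after contraction by the corresponding $q_{i}$, identified with an interval of length $w_{i}$, and $\hat{A}$ restricted to a fiber inherits the non-averaging property of $\tilde{A}$. The induction hypothesis combined with the inequality $h(n') \leq 2\tilde{H}(n')+2$ from \eqref{usefulineq} then gives $|\hat A \cap F| \leq C' w_{i}^{\sqrt{2}-1}(\log n)^{2}$ for every fiber $F$ of width $w_{i}$, for some absolute constant $C'$. Summing over the $w_{3-i}$ fibers in the $q_{i}$-direction produces $|\hat{A}| \leq C' w_{3-i} w_{i}^{\sqrt{2}-1}(\log n)^{2}$ for $i = 1, 2$; multiplying the two bounds yields
\[
|\hat{A}|^{2} \lesssim (w_{1}w_{2})^{\sqrt{2}}(\log n)^{4} = |P|^{\sqrt{2}}(\log n)^{4}.
\]
From the properness of $csP$ we have $(cs)^{2}|P| \leq sn$, so $|P| \lesssim n\log m/m$. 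Plugging this in and using $|\hat A| \geq m/2$ gives $m^{2+\sqrt{2}} \lesssim n^{\sqrt{2}}(\log n)^{4+\sqrt{2}}$, and the algebraic identities $\sqrt{2}/(2+\sqrt{2}) = \sqrt{2}-1$ and $(4+\sqrt{2})/(2+\sqrt{2}) = 3-\sqrt{2}$ then produce $m \lesssim n^{\sqrt{2}-1}(\log n)^{3-\sqrt{2}}$. Since $3-\sqrt{2} < 2$, this contradicts $m > Cn^{\sqrt{2}-1}(\log n)^{2}$ for $C$ sufficiently large, which is the promised ``additional gain'' from the non-averaging assumption alluded to in the introduction.

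The remaining case $d=1$ is handled analogously: $P$ is a homogeneous AP with some common difference $q$, so $\hat{A} \subseteq q\mathbb{Z}$ and $\hat{A}/q$ is a non-averaging subset of $[1, n/q]$, and the induction gives $|\hat{A}| \leq 2C(n/q)^{\sqrt{2}-1}(\log n)^{2} + 2$, contradicting $|\hat A|\geq m/2$ provided $q$ exceeds a fixed constant threshold. The step I expect to be most delicate, and where the argument must be orchestrated carefully, is closing out the 1D case when the common difference $q$ is bounded (in particular $q=1$), since contraction no longer reduces the problem size. In this regime one exploits that $\Sigma(A')$ contributes an interval (or AP of bounded common difference) of length $\gg cm^{2}/\log m$ inside $\Sigma(\tilde{A})$ disjoint from $\Sigma(A)$, which, combined with the fact that $A$ itself is non-averaging and contributes many low-order subset sums to $\Sigma(A)$, gives the necessary contradiction either through a residue analysis of $A$ modulo $q$ or through a second application of Theorem~\ref{thm:hom-AP-build} to $A$ that is played off against the structure already found in $\tilde{A}$.
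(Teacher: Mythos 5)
Your overall strategy --- strong induction on $n$, applying Theorem~\ref{thm:hom-AP-build} with $s = \Theta(m/\log m)$, ruling out $d \ge 3$ by a volume count, and then exploiting the non-averaging property fiber by fiber in the $d=2$ case --- matches the paper's. Your $d=2$ argument is essentially the paper's: the paper bounds $|\hat A|$ using only the longer-width fibers and then applies the AM--GM-style inequality $w_1 w_2^{\alpha} \le (w_1 w_2)^{(1+\alpha)/2}$ (using $w_1 \le w_2$), whereas you bound both directions and multiply; these are equivalent. (The paper also separates out the sub-case $w_2 \ge n$ to make the application of the induction hypothesis to $\tilde H(w_i)$ legitimate, but as you essentially observe via your bound $|P| \lesssim n\log m/m$, this sub-case is vacuous for $n$ large.)

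The genuine gap is your $d=1$ case, and it is not a matter of delicate orchestration --- the induction approach you sketch cannot close it. If $P$ is a $1$-dimensional AP of length $L$ with difference $q$ containing $\hat A$, the only bounds you can extract are $L \lesssim n$ (from $|csP| \le |\Sigma(A')| \le sn+1$) and, via the induction hypothesis, $|\hat A| \le h(L) \lesssim L^{\alpha}(\log n)^2$, hence $L \gtrsim n$. These are consistent with $L \asymp n$ and $q$ bounded, so there is no contradiction. The paper handles $d=1$ by a completely different mechanism: it uses the \emph{unconditional} bound $h(L) = O(\sqrt L)$ (from \eqref{usefulineq}, \eqref{nextineq}) rather than the induction hypothesis, which forces $L \ge c_0 m^2$ (much larger than $n$), so that $\Sigma(\hat A)$ contains a homogeneous AP of length $\gtrsim m^3/\log m > n$; it then invokes Claim~\ref{claim:intersect-Sigma}, which shows that any homogeneous AP in $\Sigma(A)$ of length $> n$ must be hit by $\Sigma(\tilde A)$ in a nonzero element, contradicting $\Sigma(A) \cap \Sigma(\tilde A) = \{0\}$.

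This last point also shows why the paper applies Theorem~\ref{thm:hom-AP-build} to $A$ rather than to $\tilde A$ as you do: Claim~\ref{claim:intersect-Sigma} crucially uses the ordering $\max(A) < \min(\tilde A)$. Because $A$ is the ``low'' set, any long AP in $\Sigma(A)$ starts below $m\max(A)$, while the elements of $\tilde A$ are all larger than $\max(A)$, so greedy blocks of $\tilde A$ (chosen so each block-sum is $\equiv 0 \pmod{a}$, via pigeonhole) produce partial sums that overtake the start of the AP in steps of size at most $an$, and therefore land in it. If you instead put the long AP inside $\Sigma(\tilde A)$, its initial term could easily exceed the total sum of $A$, and $\Sigma(A)$ would never reach it. For the $d=2$ case the choice is irrelevant (that argument never uses the ordering), but for $d=1$ you should apply the theorem to $A$, use $h(L) \le O(\sqrt L)$ to force $L \gtrsim m^2$, and then finish with the intersection claim.
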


\begin{proof} We prove the theorem by strong induction on $n$. Let $n_0$ be any fixed positive integer. As $\tilde{H}(n) \leq n$ holds trivially, by taking $C$ sufficiently large, we may assume that (\ref{lastthm}) holds for all $2 \leq n \leq n_0$, giving us the base cases of our strong induction. For the induction hypothesis, assume that $n > n_0$ and (\ref{lastthm}) holds for all $n' < n$. Our aim for the rest of the proof is to show that (\ref{lastthm}) holds for $n$. 

Let $\alpha=\sqrt{2}-1$. Let $\tilde{H}(n)=m$ and assume, for the sake of contradiction, that $m>Cn^{\alpha}(\log n)^{2}$. Then there are non-averaging subsets $A$ and $\tilde{A}$ of $[n]$ of size $m$ with $\max(A)<\min(\tilde{A})$ whose sets of subset sums have no non-zero common element.

\begin{claim}\label{claim:intersect-Sigma}
If $\Sigma(A)$ contains a homogeneous progression $P$ of length larger than $n$, then $\Sigma(\tilde{A})$ must intersect $\Sigma(A)$ in a non-zero element. 
\end{claim}

{\noindent \it Proof of Claim.} 
Let $a$ be the common difference of $P$ and $x$ its initial element. By the pigeonhole principle, any set of $a$ integers contains a non-empty subset whose sum is divisible by $a$. We may therefore partition $\tilde{A}$ greedily into subsets $T_1\cup \dots\cup T_s$, each of size at most $a$, such that, for each $i\le s-1$, the sum of the elements in $T_i$ is a multiple of $a$. Furthermore, the sum of the elements in each $T_i$ is at most $an$. Thus, as long as $\sum_{z \in T_1\cup \dots \cup T_{s-1}} z > x$, $\Sigma(\tilde{A})$ intersects $P$ in a non-zero element. But if we let $M = \max(A) < \min(\tilde{A})$, then $x \le M|A| - an = Mm-an$, 
whereas $\sum_{z \in T_1\cup \dots \cup T_{s-1}} z > M|A'| - an = Mm-an$. Thus, $\Sigma(\tilde{A})$ intersects $\Sigma(A)$ in a non-zero element, as required. \qed

\vspace{3mm}

By Theorem \ref{thm:hom-AP-build}, there exists an absolute constant $c>0$ such that, for some $d$, there is a subset $\hat{A}$ of $A$ of size at least $c|A|$, a $d$-dimensional
GAP $P$ containing $\hat{A} \cup\{0\}$ and a subset $A'$ of $\hat{A}$ of size
at most $\frac{cm}{\log m}$ such that $\Sigma(A')$ contains a proper homogeneous translate
of $\frac{c^2m}{\log m}P$. 
Furthermore,
we have $d \le 2$. Indeed, if $d\ge 3$, then $$|\Sigma(A')| \ge \left(\frac{c^2m}{2\log m}\right)^3 |P| \ge \frac{c^7}{8} \frac{m^4}{(\log m)^3} > mn,$$
where we used that $|P| \ge |\hat{A}| \ge cm$, $m > Cn^{\alpha}(\log n)^{2}$ and, by Lemma~\ref{lem:GAP-size}, that $|kP| \ge (k/2)^d |P|$ whenever $P$ is a $d$-dimensional GAP and $kP$ is proper. 
However, this contradicts $\Sigma(A') \subseteq \Sigma(A) \subseteq [mn]$. 

We first consider the case $d=1$. Let $L$ be the length of $P$.
Since $P$ contains $\hat{A}$, which is a non-averaging set of size $cm$, we have $h(L)\ge cm$. As $h(L) = O(L^{1/2})$ by (\ref{usefulineq}) and (\ref{nextineq}), there is a constant $c_0>0$ such that $L\ge c_0m^{2}$. Thus, $\Sigma(\hat{A})$ contains a homogeneous progression 
of length at least $\frac{c^2m}{2\log m}L\ge\frac{c^2c_0m^{3}}{2\log m}>n$, where the last inequality holds as $C$ is sufficiently large and $m > Cn^{\alpha}(\log n)^{2}$, where $\alpha = \sqrt{2}-1 > 1/3$. By Claim \ref{claim:intersect-Sigma}, this is a contradiction.

Suppose now that $d = 2$. Let $P=x+[0,w_{1}-1]q_{1}+[0,w_{2}-1]q_{2}$
with $w_{1}\le w_{2}$. First, consider the case where $w_2 \ge n$. Since $\Sigma(A')$ contains a proper translate of $\frac{c^2m}{\log m}P$ for a subset $A'$ of $\hat{A}$ of size at most $\frac{cm}{\log m}$ and $\Sigma(A')$ is a subset of $[mn]$, we have 
\[
mn \ge \left|\frac{c^2m}{\log m}P\right| \ge \frac{c^4}{4} \frac{m^2}{(\log m)^2} w_1w_2 \ge \frac{c^4}{4} \frac{m^2}{(\log m)^2} n.
\]
Hence, $m/(\log m)^2 \le 4c^{-4}$, which contradicts our assumption that $m>Cn^{\alpha}(\log n)^2$ for a sufficiently large choice of $C$. 

Next, consider the case where $w_2 < n$. 
Since $\hat{A}$ is a non-averaging set, the intersection of $\hat{A}$ with each translate of $[0,w_{2}-1]q_{2}$ has size at most $h(w_{2})\le 3\tilde{H}(w_2) \leq 3Cw_2^{\alpha}(\log w_2)^{2}$, where the first inequality is by (\ref{usefulineq}) and the second inequality is by the induction hypothesis. Hence, the size of $\hat{A}$ is at most $w_1 h(w_2)$, implying that $w_{1}h(w_{2}) \ge |\hat{A}| \ge cm$, 
so $3Cw_1w_2^{\alpha}(\log w_2)^{2} \ge cm$. Since $w_2 \ge w_1$, we have $w_1w_2^{\alpha}(\log w_2)^{2} \le (w_1 w_2)^{(1+\alpha)/2}(\log (w_1w_2))^{2}$. Thus, 
\[
w_1w_2 \ge \frac{1}{4}(cm/3C)^{2/(1+\alpha)}/(\log(cm/3C))^{4/(1+\alpha)}.
\]
Hence, 
\[
\left|\frac{c^2m}{\log m}P\right|\ge \frac{c^4}{4} \frac{m^{2}}{(\log m)^{2}}w_{1}w_{2}\ge \frac{c^4}{4} \frac{m^{2}}{(\log m)^{2}} \cdot \frac{1}{4}(cm/3C)^{\sqrt{2}}/(\log(cm/3C))^{2\sqrt{2}}. 
\]

Since $\Sigma(A')$ is a proper subset of $[mn]$, we have 
\[
\frac{c^{4+\sqrt{2}}}{16(3C)^{\sqrt{2}}} \frac{m^{2+\sqrt{2}}}{(\log m)^{2} (\log (cm/3C))^{2\sqrt{2}}}\leq \left|\frac{c^2m}{\log m}P\right| \leq |\Sigma(A')|< mn \le \frac{1}{C^{1/\alpha}} \frac{m^{1+1/\alpha}}{(\log (m/C))^{2/\alpha}},
\]
where, in the last inequality, we used $m>Cn^{\alpha}(\log n)^{2}$, so that $n < (m/C)^{1/\alpha}/(\log (m/C))^{2/\alpha}$. 
In particular, since $1/\alpha = \sqrt{2} + 1$,
\[
(\log m)^2 (\log (cm/3C))^{2\sqrt{2}}/(\log(m/C))^{2\sqrt{2}+2} \ge c^{4+\sqrt{2}} C / (16\cdot 3^{\sqrt{2}}),
\]
so 
\begin{equation}\label{alastineq}
(\log m)^2/(\log(m/C))^{2} \ge c^{4+\sqrt{2}} C / (16\cdot 3^{\sqrt{2}}).
\end{equation}

Recall now that $m>Cn^{\alpha}(\log n)^{2}$. If $m \leq C^2$ and $n > n_0$ is sufficiently large, 
the left-hand side of (\ref{alastineq}) is at most $4(\log C)^2$ and otherwise the left-hand side of (\ref{alastineq}) is at most $4$. In either case, as $C$ can be taken sufficiently large, (\ref{alastineq}) cannot be satisfied, a contradiction. Hence, $m\le Cn^{\alpha}(\log n)^{2}$, completing the induction.
\end{proof}

\end{document}